\begin{document}

\def\Max{\text{max}}
\def\P{\mathbb P}
\def\R{\mathbb R}
\def\T{\mathbb{T}}
\def\N{\mathbb N}
\def\Z{\mathbb Z}
\def\C{\mathbb C}
\def\D{\mathbb D}
\def\M{\mathbb M}
\def\a{{\underline a}}
\def\b{{\underline b}}
\def\n{{\underline n}}
\def\Log{\text{log}}
\def\loc{\text{loc}}
\def\inta{\text{int }}
\def\det{\text{det}}
\def\exp{\text{exp}}
\def\Re{\text{Re}}
\def\lip{\text{Lip}}
\def\leb{\text{Leb}}
\def\dom{\text{Dom}}
\def\diam{\text{diam}\:}
\def\supp{\text{supp}\:}
\newcommand{\ovfork}{{\overline{\pitchfork}}}
\newcommand{\ovforki}{{\overline{\pitchfork}_{I}}}
\newcommand{\Tfork}{{\cap\!\!\!\!^\mathrm{T}}}
\newcommand{\whforki}{{\widehat{\pitchfork}_{I}}}
\newcommand{\marginal}[1]{\marginpar{{\scriptsize {#1}}}}
\def\sR{{\mathfrak R}}
\def\sM{{\mathfrak M}}
\def\sA{{\mathfrak A}}
\def\sB{{\mathfrak B}}
\def\sY{{\mathfrak Y}}
\def\sE{{\mathfrak E}}
\def\sP{{\mathfrak P}}
\def\sG{{\mathfrak G}}
\def\sa{{\mathfrak a}}
\def\sb{{\mathfrak b}}
\def\sc{{\mathfrak c}}
\def\se{{\mathfrak e}}
\def\sg{{\mathfrak g}}
\def\sd{{\mathfrak d}}
\def\sr{{\mathfrak {sr}}}
\def\ss{{\mathfrak {s}}}
\def\sD{{\mathfrak {p}}}
\def\sp{{\mathfrak {p}}}
\def\arr{\overleftarrow}
\def\u{\underline}
\def \S{\mathbb S}
\def \A{\mathbb A}
\def \Diff{{\rm Diff}}
\newtheorem{prop}{Proposition} [section]
\newtheorem{thm}[prop] {Theorem}
\newtheorem{conj}[prop] {Conjecture}
\newtheorem{defi}[prop] {Definition}
\newtheorem{lemm}[prop] {Lemma}

\newtheorem{prob}[prop] {Problem}

\newtheorem{sublemm}[prop] {Sub-Lemma}
\newtheorem{cor}[prop]{Corollary}
\newtheorem{theo}{Theorem}
\newtheorem{theoprime}{Theorem}
\newtheorem{Claim}[prop]{Claim}
\newtheorem{fact}[prop]{Fact}

\newtheorem{coro}[theo]{Corollary}
\newtheorem{defprop}[prop]{Definition-Proposition}
\newtheorem{propdef}[prop]{Proposition-Definition}

\newtheorem{question}[prop]{Question}
\newtheorem{conjecture}[prop]{Conjecture}

\theoremstyle{remark}
\newtheorem{exam}[prop]{Example}
\newtheorem{rema}[prop]{Remark}

\renewcommand{\thetheo}{\Alph{theo}}
\renewcommand{\thetheoprime}{\Alph{theo}$'$}
\title{On Herman's Positive Entropy Conjecture}

\author{Pierre Berger\footnote{CNRS-Universit\'e Paris 13}\; and Dimitry Turaev\footnote{Imperial College London}}

\date{\today}

\maketitle

\begin{abstract} 
We show that any area-preserving $C^r$-diffeomorphism of a two-dimensional surface displaying an elliptic fixed point can be $C^r$-perturbed to one exhibiting a chaotic island whose metric entropy is positive, for every $1\le r\le \infty$. This proves a conjecture of Herman stating that the identity map of the disk can be $C^\infty$-perturbed to a conservative diffeomorphism with  positive metric entropy. This implies also that the 
Chirikov standard map for large and small parameter values can be $C^\infty$-approximated by  a conservative diffeomorphisms displaying a positive metric entropy (a weak version of Sinai's positive metric entropy conjecture).  Finally, this sheds light onto a Herman's question on the density of $C^r$-conservative diffeomorphisms displaying a positive metric entropy: we show the existence of a dense set formed 
by conservative diffeomorphism which either are weakly stable (so, conjecturally, uniformly hyperbolic) or display a chaotic island of positive metric entropy. 
\end{abstract}
\tableofcontents

\section*{Introduction}

Consider a diffeomorphism $f$ of a two-dimensional surface $\M$. The \emph{maximal Lyapunov exponent} of $x\in \M$ is
\begin{equation}\label{mlyap}
\lambda(x)= \limsup_{n\to \infty} \frac1n \log \|Df^n(x)\|\; .
\end{equation}
It quantifies the sensitivity to the initial conditions: if $\lambda(x)$ is positive, then the forward orbits of most of the points from a neighborhood of $x$ 
will diverge exponentially fast from the orbit of $x$. 

Let $f$ preserve a smooth area form $\omega$ on $\M$. The {\em metric entropy}\footnote{We employ here Pesin formula for the Kolmogorov-Sinai entropy \cite{Pe77}.} 
of $f$ is the integral
\begin{equation}\label{entrode}
h_\omega(f):=\int_\M  \lambda(x) \;\omega(dx).
\end{equation}
Whenever the metric entropy of a dynamical system is positive, points in $\M$ display a positive Lyapunov exponent {\em with non-zero probability}.

One of the most fundamental questions in conservative dynamics is

\begin{question} How typical are conservative dynamical systems with positive metric entropy?
\end{question}

Note that a different notion of topological entropy is one of the basic tools in describing chaotic dynamics: positive topological entropy indicates the presence
of uncountably many orbits with a positive maximal Lyapunov exponent \cite{Ka80}. However, the positivity of the metric entropy is a much stronger property, as it
ensures positive maximal Lyapunov exponent for a non-negligible set of initial conditions. While numerical evidence for a large set of initial conditions corresponding to seemingly chaotic behavior in area-preserving maps is abundant, a rigorous proof for the positivity of metric entropy is available only for
a small set of specially prepared examples (see Section 1). Currently no mathematical technique exists for answering Question 0.1 in full generality.  

Several prominent conjectures are related to this question. In order to formulate them, let us recall the topologies involved.
For $1\le r\le \infty$, let ${\rm {\rm Diff}}_\omega^r(\M)$ be the space of diffeomorphisms which keep the area form $\omega$ invariant. When $r<\infty$, 
the space ${\rm {\rm Diff}}_\omega^r(\M)$ is endowed with the uniform $C^r$-topology. The space ${\rm {\rm Diff}}_\omega^\infty(\M)$ is endowed with the projective limit
topology whose base is formed by all $C^r$-open subsets for all finite $r$. Let us fix a metric $d_r$ compatible with the $C^r$-topology (the space 
${\rm {\rm Diff}}_\omega^r(\M)$ with the metric $d_r$ is complete). The $C^\infty$-topology in ${\rm {\rm Diff}}_\omega^\infty(\M)$ is defined by
the following metric:
$$d_\infty(f,g)=\sum_{r=0}^\infty \frac1{r!} \min(1,d_r(f,g))$$
(note that ${\rm {\rm Diff}}_\omega^\infty(\M)$ with the metric $d_\infty$ is complete).

Consider the two-dimensional disc $\D:=\{(x,y)\in \R^2:x^2+y^2\le 1\}$. Let $id$ be the identity map of $\D$. One of the favorite conjectures of Herman can be formulated as follows.

\begin{conjecture}[Herman \cite{He98}]
For every $\varepsilon>0$ there exists $f\in {\rm {\rm Diff}}^\infty_\omega(\D)$ such that $d_\infty(f,id) <\varepsilon$ and the metric entropy of $f$ is
positive:  $h_\omega(f)>0$.
\end{conjecture}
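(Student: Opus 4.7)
The strategy is a reduction of Herman's conjecture to the main theorem announced in the abstract: every $f\in\Diff^r_\omega(\M)$ that displays an elliptic fixed point can be $C^r$-approximated by a diffeomorphism exhibiting a chaotic island of positive metric entropy. The identity $id$ does not itself satisfy the hypothesis, because both eigenvalues of the differential of $id$ equal $1$ (parabolic, not elliptic), so a preliminary perturbation is needed. Fix $\varepsilon>0$ and pick a smooth $H\colon\D\to\R$ compactly supported in the interior of $\D$ with $H(x,y)=-\tfrac{1}{2}(x^2+y^2)$ near the origin. For a small $t>0$ with $t/(2\pi)$ irrational, the time-$t$ map $g_0$ of the Hamiltonian flow of $H$ lies in $\Diff^\infty_\omega(\D)$, coincides with $id$ near $\partial\D$, and has $0$ as an elliptic fixed point with irrational rotation number; taking $t$ sufficiently small ensures $d_\infty(g_0,id)<\varepsilon/2$. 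Applying the main theorem to $g_0$ then produces $f\in\Diff^\infty_\omega(\D)$ with $d_\infty(f,g_0)<\varepsilon/2$ and $h_\omega(f)>0$, and the triangle inequality yields $d_\infty(f,id)<\varepsilon$, which is the required conclusion.

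All the mathematical content is thus packed into the main theorem, and the subtle point is the following. Positive metric entropy requires, via the Pesin formula~(\ref{entrode}), a set of \emph{positive Lebesgue measure} of initial conditions with positive maximal Lyapunov exponent. The classical constructions of Newhouse or Zehnder near an elliptic fixed point only produce hyperbolic horseshoes, which have zero area and thus contribute nothing to the integral defining $h_\omega$. To overcome this I would design an Anosov--Katok-style inductive construction: on successive scales $r_k\to 0$, bring the dynamics into Birkhoff normal form near the elliptic point, rescale by $r_k$ to obtain a near-integrable twist map, and perturb it in an annular region bounded by two preserved KAM invariant curves so as to implant a prescribed ``fat'' nonuniformly hyperbolic block -- for instance a positive-measure Pesin set extracted from a perturbation of an area-preserving hyperbolic prototype. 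The KAM curves of the next scale must be arranged to enclose the newly implanted block so as to trap it, guaranteeing that every later perturbation (which will act only inside a strictly smaller sub-island) leaves the already-constructed positive-measure hyperbolic set untouched.

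The main obstacle is exactly the construction and scale-by-scale propagation of this ``fat'' hyperbolic seed. Producing a positive-Lebesgue-measure nonuniformly hyperbolic set in an explicit model that is itself $C^\infty$-close to an integrable twist is a delicate local problem, and I would expect it to rest on Pesin/Sinai technology together with a partially hyperbolic building block of controlled size. Moreover, the rescaling at successive scales inflates the $C^r$-norms of any given perturbation, so arranging the magnitudes of the perturbations so that the whole sequence converges in every $C^r$-norm simultaneously (i.e.\ in the metric $d_\infty$) forces them to decay faster than any polynomial in the scale parameter, while the KAM-trapping mechanism must remain effective in that regime. Careful bookkeeping of $C^r$-norms under rescaling, together with quantitative KAM stability thresholds, is where the technical weight of the argument is concentrated.
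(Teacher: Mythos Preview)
Your first paragraph is essentially the paper's reduction: Theorem~A is stated for a \emph{non-hyperbolic} periodic point, so one may apply it directly to $id$ (every point is a non-hyperbolic fixed point), and the first step in the proof of Theorem~A is precisely your preliminary perturbation making the point elliptic. So far so good.

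Your paragraphs two and three, however, outline a route to Theorem~A that is both different from the paper's and incomplete by your own admission. The paper does \emph{not} proceed by an Anosov--Katok scheme with KAM trapping of successively implanted Pesin blocks. Instead it (i) builds once and for all a single model map $\check F\in\Diff^\infty_\omega(\D)$ with a \emph{stochastic island} bounded by four heteroclinic bi-links (a Katok--Przytycki type surgery on a linear Anosov map), and proves this island is robust under any $C^2$-perturbation that does not split the links; (ii) creates, near the elliptic point of the given $f$, a flat homoclinic tangency, and then uses a rescaling lemma to show that a renormalized iterate $Q^{-1}\hat f^n Q$ of an arbitrarily $C^r$-small perturbation $\hat f$ of $f$ can be made to equal $S_\psi\circ F$ for $F$ arbitrarily $C^r$-close to $\check F$ and for an \emph{arbitrary} function $\psi$; (iii) shows that for any such $F$ one can choose $\psi$ so that $S_\psi\circ F$ again has four unbroken heteroclinic bi-links, hence a stochastic island; (iv) upgrades from $C^r$ to $C^\infty$ by a separate smoothing-plus-link-restoration argument. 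The positive-measure hyperbolicity is thus inherited from a fixed model via smooth conjugacy, not built inductively at shrinking scales.

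The gap you identify in your own plan---producing a positive-measure nonuniformly hyperbolic block that is itself $C^\infty$-close to an integrable twist and then protecting it at every later stage---is genuine and, as far as I know, is not resolved anywhere in the literature along those lines. The paper sidesteps it entirely: the ``fat'' hyperbolic set comes from a single explicit example, and the delicate point is not KAM trapping but rather (a) controlling renormalized iterates near a homoclinic tangency well enough to realize $S_\psi\circ F$ \emph{exactly} rather than approximately, and (b) the link-restoration mechanism (solving for $\psi$ so that the perturbed stable and unstable manifolds re-coincide). Your sketch does not touch either of these, so as a proof of Theorem~A it does not stand on its own.
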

It is linked to his question:
\begin{question}[Herman \cite{He98}]
Is the set of diffeomorphisms $f$ with positive metric entropy $h_\omega(f)$ dense in ${\rm {\rm Diff}}^\infty_\omega(\D)$?
\end{question}

In this work we prove Herman's Conjecture 0.2. This also could be a step towards a positive answer to Question 0.3.
Recall that a periodic point $P$ of $f$ is \emph{hyperbolic} if the eigenvalues of $Df^p(P)$ (where $p$ is the period of $P$) are not equal to $1$ in the absolute value. The main result of this work is the following

\begin{theo}\label{main}
For any surface $(\M,\omega)$, if a diffeomorphism $f\in {\rm {\rm Diff}}^\infty_\omega(\M)$ has a periodic point which is not hyperbolic, then there is a
$C^\infty$-small (as small as we want) perturbation of $f$ such that the perturbed map $\hat f\in  {\rm {\rm Diff}}^\infty_\omega(\M)$ has positive metric entropy: $h_\omega (\hat f)>0$.
\end{theo}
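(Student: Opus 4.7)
The plan is to produce, starting from the non-hyperbolic periodic orbit of $f$, an invariant region of positive Lebesgue $\omega$-measure on which the perturbed map $\hat f$ has uniformly positive maximal Lyapunov exponents; once this is in place, $h_\omega(\hat f)>0$ follows by integrating (\ref{entrode}). I would proceed in four stages.

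First, I would reduce to the case that $f$ has an elliptic periodic point $P$ of period $p$ whose multipliers are $e^{\pm 2\pi i\alpha}$ with $\alpha$ Diophantine and satisfying no low-order resonances, and such that the Birkhoff normal form of $f^p$ at $P$ has nondegenerate twist. If the given non-hyperbolic periodic point is parabolic (multipliers $\pm 1$) or has resonant elliptic multipliers, one first applies a $C^\infty$-small Hamiltonian perturbation supported in a tubular neighborhood of the orbit; by an area-preserving analogue of Franks' lemma the symplectic linear part at $P$ can be prescribed freely, and a further small perturbation makes the twist coefficient nonzero.

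Second, in symplectic normal-form coordinates centered at $P$, the iterate $f^p$ becomes a rigid rotation by $2\pi\alpha$ composed with a radial twist, modulo a $C^\infty$-flat remainder. A symplectic rescaling by a small parameter $\delta$ stretches a tiny neighborhood of $P$ onto a fixed disc and brings the local dynamics $C^k$-close, for any prescribed $k$, to an integrable twist map, with error $O(\delta^{N(k)})$ where $N(k)\to\infty$. Into this rescaled chart I would then graft, on a chosen sub-annulus, a fixed area-preserving model $g_0\in\Diff^\infty_{\omega_0}$ known to possess a positive-$\omega_0$-measure, uniformly hyperbolic, $g_0$-invariant set with positive Lyapunov exponents. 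Suitable candidates exist: linked twist maps (positive metric entropy proven by Burton--Easton and Wojtkowski) or direct $C^\infty$ Pesin-block constructions. Pulled back by the rescaling, the surgery appears in the original coordinates as a perturbation of size $O(\delta^{N(r)})$ in each $C^r$-norm, so choosing $\delta$ small enough makes $\hat f$ arbitrarily $C^\infty$-close to $f$. Third, on the grafted sub-annulus, $\hat f^p$ is then smoothly conjugate to $g_0$ and inherits a positive-$\omega$-measure invariant set whose maximal Lyapunov exponent is bounded below by a positive constant; Pesin's formula (\ref{entrode}) then delivers $h_\omega(\hat f)>0$.

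The main obstacle is the grafting step. The model $g_0$ has to be planted inside the near-integrable annulus so that (i) the global map remains area-preserving and $C^\infty$, (ii) the perturbation stays $C^\infty$-small, and (iii) the positive-measure hyperbolic set of $g_0$ survives the collar where $g_0$ is blended into the ambient twist dynamics. Point (iii) is the delicate one: one must choose $g_0$ whose hyperbolic island lies strictly in the interior of its domain, so that a smooth cutoff, realized via a Hamiltonian bump function in the rescaled chart, can interpolate between $g_0$ and the surrounding twist map in a buffer that never meets the island. The key feature is that $\delta$ is chosen \emph{after} $g_0$ and its buffer have been fixed; this decouples the size of the perturbation from the complexity of $g_0$ and yields $C^\infty$-smallness of $\hat f-f$ while preserving the measure-theoretic chaos of the planted model.
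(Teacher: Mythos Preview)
Your grafting step contains a scaling error that is precisely the obstacle this paper is built to overcome. Write the rescaling as $\Psi_\delta(X,Y)=(\delta X,\delta Y)$. It is true that $\Psi_\delta^{-1}\circ f^p\circ\Psi_\delta$ converges in $C^k$ to an integrable twist, but that is not the estimate you need. The perturbation you must control is $\hat f^p-f^p$ in the \emph{original} coordinates, and in those coordinates the graft equals $\Psi_\delta\circ(g_0-\text{twist})\circ\Psi_\delta^{-1}$ on the planted annulus. The $r$-th derivative of this picks up a factor $\delta^{1-r}$, so for fixed $g_0$ the perturbation has $C^r$-size $\asymp\delta^{1-r}\|g_0-\text{twist}\|_{C^r}$, which blows up for every $r\ge 2$ as $\delta\to 0$. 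Your sentence ``the surgery appears in the original coordinates as a perturbation of size $O(\delta^{N(r)})$'' confuses the rate at which $f^p$ converges to the twist with the cost of replacing the twist by $g_0$; the latter is governed by the fixed $C^r$-distance between $g_0$ and the twist and by the contraction of $\Psi_\delta$, not by the Birkhoff normal-form order. This is exactly the Ruelle--Takens obstruction discussed in Section~1.4 of the paper: a rescale-and-paste argument yields $C^1$-smallness, can be pushed to $C^2$ with extra tricks, and fails for $r\ge 3$.

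The paper's route is accordingly quite different from yours. Instead of planting a fixed positive-entropy model near an elliptic point, one first perturbs to create a \emph{flat homoclinic tangency} (via \cite{GT10}), and then uses a rescaling lemma near this tangency (Section~\ref{section5}) so that a renormalized return map equals $S_\psi\circ F$ for \emph{any} function $\psi$, with $F$ coming from a $C^\infty$-dense set. The point is that the anisotropic H\'enon-type rescalings there do allow $C^r$-small perturbations of $f$ for all $r$, and the free functional parameter $\psi$ is then used (Propositions~\ref{KPAP} and~\ref{closing_bi-link}) to force $S_\psi\circ F$ to have heteroclinic bilinks bounding a stochastic island that is robust relative link preservation. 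The positive entropy is not inherited from a rigid model $g_0$ but is rebuilt after the rescaling by closing the links; this is what your scheme lacks.
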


If $f=id$, then every point in $\M$ is a non-hyperbolic fixed point of $f$, so Theorem A implies Herman's conjecture immediately. 
Another immediate consequence employs the notion of the weak stability \cite{Ma82}. The map  $f\in {\rm {\rm Diff}}^r_\omega (\M)$ is 
$C^r_\omega$-{\em weakly stable} if all the periodic points of any $C^r$-close to $f$ map $\hat f\in {\rm {\rm Diff}}^r_\omega(\M)$ are hyperbolic. 

\begin{coro}
A diffeomorphism $f\in {\rm {\rm Diff}}^\infty_\omega(\M)$ is either $C^\infty_\omega$-weakly stable or $C^\infty$-approximated by 
a diffeomorphism from ${\rm {\rm Diff}}^\infty_\omega(\M)$ which has positive metric entropy.
\end{coro}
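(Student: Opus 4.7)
\medskip
\noindent\textbf{Proof proposal.} The plan is to argue by dichotomy and reduce the statement to a single application of Theorem~A. Fix $f\in {\rm Diff}^\infty_\omega(\M)$ and $\varepsilon>0$; I want to produce $\hat f\in {\rm Diff}^\infty_\omega(\M)$ with $d_\infty(\hat f,f)<\varepsilon$ having either the same status (weakly stable) or positive metric entropy.

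First I would split into two cases. If $f$ is $C^\infty_\omega$-weakly stable, the first alternative of the corollary holds and there is nothing to prove. If $f$ is \emph{not} $C^\infty_\omega$-weakly stable, then by the very definition of weak stability, there exists $g\in {\rm Diff}^\infty_\omega(\M)$ with $d_\infty(g,f)<\varepsilon/2$ such that $g$ admits a periodic point $P$ which is not hyperbolic, i.e.\ at least one eigenvalue of $Dg^p(P)$ has modulus one.

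Now I would apply Theorem~A to this map $g$: since $g$ has a non-hyperbolic periodic point, there exists a perturbation $\hat f\in {\rm Diff}^\infty_\omega(\M)$, as $C^\infty$-close to $g$ as desired, with $h_\omega(\hat f)>0$. Choosing the perturbation so that $d_\infty(\hat f,g)<\varepsilon/2$, the triangle inequality yields $d_\infty(\hat f,f)<\varepsilon$, which places $\hat f$ in the $\varepsilon$-neighborhood of $f$ while realizing the second alternative of the corollary. Since $\varepsilon>0$ was arbitrary, this shows that every $f$ that fails to be weakly stable is $C^\infty$-approximable by diffeomorphisms of positive metric entropy.

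The only step that requires any comment is the unfolding of the definition of weak stability: a non-weakly-stable $f$ only directly yields a nearby map with \emph{some} non-hyperbolic periodic point (after an arbitrarily small perturbation), but this is exactly the hypothesis needed to invoke Theorem~A, so no further work is required. The main conceptual ingredient is of course Theorem~A itself; the corollary is a formal consequence packaged by the triangle inequality.
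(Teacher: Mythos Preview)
Your proof is correct and follows exactly the approach implicit in the paper, which presents the corollary as an immediate consequence of Theorem~A without further detail. The dichotomy you unfold---negating the definition of weak stability to obtain a nearby map with a non-hyperbolic periodic point, then applying Theorem~A and the triangle inequality---is precisely the intended argument.
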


This statement suggests that the answer to Question 0.3 has to be positive. The reason is that the common belief among dynamicists is that
any $C^\infty$-weakly stable map of a closed manifold is uniformly hyperbolic (see Section 1.7). If this conjecture is true, then every $C^\infty_\omega$-weakly stable
diffeomorphism has positive metric entropy (and, moreover, no $C^\infty_\omega$-weakly stable
diffeomorphisms exist when $\M=\D$).

Another metric entropy conjecture regards the very popular Chirikov standard map family. This is a one-parameter family of area-preserving diffeomorphisms of $\T^2$ defined for $a \in \R$ by
\begin{equation}\label{chirikov} T_a(x,y) = (2x -y + a \sin 2 \pi x, x).\end{equation}
For $a\in (0,\frac{2}{\pi})$ this map has an elliptic fixed point at $(x=1/2, y=1/2)$ (a period-$p$ point $P$ of an area-preserving map $f$ is called \emph{elliptic} if the eigenvalues of $Df^p(P)$ are equal to $e^{\pm i\alpha}$ where $\alpha\in (0,\pi)$, i.e. they are complex and lie on the unit circle).
When $a$ increases, the elliptic fixed point loses stability and, at $a$ large enough, the numerically obtained phase portraits display a large set
where the dynamics is apparently chaotic (the so-called ``chaotic sea'' \cite{Ch79}). A conjecture due to Sinai can be formulated as follows (cf. \cite{Si94} P.144):
\begin{conj}\label{sinaiconj}
There exists a set $\Lambda\subset\R$ of positive Lebesgue measure such that, for $a \in \Lambda$, the metric entropy of $T_a$ is positive.
\end{conj}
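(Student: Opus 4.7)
My plan for approaching Sinai's conjecture is to combine Theorem~A with a parameter-exclusion argument of Jakobson--Benedicks--Carleson type. For any $a$ in a large open subset of $\R$ the standard map $T_a$ possesses an elliptic periodic orbit: for $a\in(0,\tfrac{2}{\pi})$ the fixed point $(1/2,1/2)$ itself is elliptic, and at larger $a$ new elliptic periodic points appear through generic period-doubling and resonant bifurcations. Theorem~A then produces, for each such $a$, a $C^\infty$-small perturbation $\hat f\in{\rm Diff}^\infty_\omega(\T^2)$ of $T_a$ with $h_\omega(\hat f)>0$. This already implies that positive-entropy conservative diffeomorphisms $C^\infty$-approximate $T_a$ on a $C^\infty$-dense set of parameters, which is precisely the weak form of the conjecture announced in the abstract.

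The main step is to upgrade this $C^\infty$-density in $a$ to positive Lebesgue measure. The natural strategy is to realise the perturbation of Theorem~A along the one-parameter direction $\partial_a T_a$. Near a bifurcation value $a_0$ at which an elliptic periodic orbit of $T_{a_0}$ yields, via Theorem~A, a chaotic island of positive metric entropy, one would try to show that the rotation number of this elliptic orbit varies transversely with $a$, so the resonance conditions underlying the construction of the chaotic island are met on a set of parameters of positive one-dimensional density. A Fubini argument over such bifurcation values would then produce the desired $\Lambda\subset\R$ of positive Lebesgue measure on which $h_\omega(T_a)>0$.

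For the large-$a$ regime a more direct route seems natural: $DT_a$ is strongly expanding except on the thin critical strips $\{|\cos 2\pi x|\ll a^{-1}\}$, which play the role of the critical point of the quadratic family. I would attempt a Benedicks--Carleson/Jakobson-type parameter exclusion, discarding those $a$ for which forward images of the critical strips return too early and too closely, and estimating the surviving parameter set using the strong hyperbolicity available away from the strips.

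The decisive obstacle, and the reason the conjecture remains open, is exactly the passage from $C^\infty$-density to positive Lebesgue measure: the chaotic island produced by Theorem~A can be arbitrarily small and is not explicitly located in $a$, so the set of good parameters it furnishes could in principle be dense but of measure zero. Moreover, persistent elliptic islands in the standard family, established by Duarte, coexist with any putative chaotic sea, so one needs quantitative control on the coexistence of hyperbolic and elliptic behaviour that appears to lie well beyond both Theorem~A and current parameter-exclusion technology.
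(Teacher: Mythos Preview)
The statement you are addressing is not a theorem of the paper but an \emph{open conjecture}: the paper explicitly says ``This conjecture is still completely open despite intense efforts.'' There is therefore no proof in the paper to compare against. What the paper does prove is Corollary~3, the approximative version: for suitable $a$ there is a $C^\infty$-small perturbation of $T_a$ (in the full space ${\rm Diff}^\infty_\omega(\T^2)$, not within the family $\{T_a\}$) with positive metric entropy. Your first paragraph correctly recovers this, and your last paragraph correctly identifies that the passage from $C^\infty$-density to positive Lebesgue measure is the whole difficulty.

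The strategy in your middle paragraphs, however, cannot work as stated, and the obstruction is structural rather than merely technical. The perturbation supplied by Theorem~A produces a stochastic island bounded by heteroclinic links, and the positivity of entropy is established \emph{only} for maps that preserve those links (this is the ``robust relative link preservation'' of Proposition~\ref{KPAP}). But heteroclinic links are destroyed by generic perturbations; in particular, as the paper notes in Section~1.2, no $C^r$-generic finite-parameter family can have a parameter value at which a link exists, and no entire diffeomorphism (including every $T_a$) can have one at all \cite{Us80}. Hence the perturbation of Theorem~A can \emph{never} be realised as $T_{a'}$ for some nearby parameter $a'$, and your proposal to ``realise the perturbation along the one-parameter direction $\partial_a T_a$'' fails at the outset: the target maps simply do not lie in the family. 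The transversality and Fubini arguments you sketch presuppose that the positive-entropy maps produced by Theorem~A intersect the curve $a\mapsto T_a$, which they do not.

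Your Benedicks--Carleson proposal for large $a$ is a separate and well-known line of attack, independent of Theorem~A; it is precisely what prior work (e.g.\ \cite{GL00}) has attempted without success, because the two-dimensional invertible setting lacks the one-dimensional reduction that makes Jakobson's and Benedicks--Carleson's arguments go through. This is not a gap you have introduced, but you should be aware that invoking it here is invoking an unsolved problem of at least the same difficulty as the conjecture itself.
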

This conjecture is still completely open despite intense efforts, see e.g. \cite{GL00}. 
However, it is shown by Duarte \cite{Du94} that the map $T_a$ 
has elliptic periodic points for an open and dense set of sufficiently large values of parameter $a$. Hence, our main theorem implies
the following ``approximative version'' of Sinai's Conjecture 0.4:
\begin{coro}
For every sufficiently large or sufficiently small $a\in \R$, there exists a $C^\infty$-small perturbation $\hat T\in {\rm {\rm Diff}}^\infty_\omega$ of the map $T_a$ 
such that $\hat T$ has positive metric entropy.
\end{coro}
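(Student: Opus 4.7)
The plan is to reduce the corollary to Theorem~A by exhibiting, for each parameter $a$ in question, a non-hyperbolic periodic point either for $T_a$ itself or for a nearby $T_{a'}$. Since an elliptic periodic point is non-hyperbolic (its eigenvalues lie on the unit circle), it will suffice to locate an elliptic periodic point.

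First, for sufficiently small $|a|$, I would compute
\[
DT_a(x,y) = \begin{pmatrix} 2 + 2\pi a \cos 2\pi x & -1 \\ 1 & 0 \end{pmatrix}
\]
and note that the two fixed points $(0,0)$ and $(1/2,1/2)\in \T^2$ of $T_a$ have traces $2+2\pi a$ and $2-2\pi a$, respectively, each with determinant~$1$. For every sufficiently small $|a|$ at least one of these traces has absolute value strictly below $2$, so the corresponding fixed point is elliptic. Theorem~A applied to $T_a$ then yields the desired perturbation $\hat T$.

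For sufficiently large $a$, I would invoke the Duarte theorem cited in the introduction: there exist $A>0$ and an open dense subset $U\subset [A,+\infty)$ such that $T_a$ possesses an elliptic (hence non-hyperbolic) periodic point whenever $a\in U$. If $a\in U$, Theorem~A applied to $T_a$ closes the argument. If instead $a\in [A,+\infty)\setminus U$, then, given $\varepsilon>0$, I would exploit density of $U$ together with the smoothness of the family $a\mapsto T_a$ in the $C^\infty$-topology to select $a'\in U$ with $d_\infty(T_{a'},T_a)<\varepsilon/2$; Theorem~A applied to $T_{a'}$ then supplies $\hat T$ with $d_\infty(\hat T,T_{a'})<\varepsilon/2$ and $h_\omega(\hat T)>0$, and the triangle inequality finishes the proof.

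There is essentially no mathematical obstacle to overcome at this stage: the nontrivial input has been entirely absorbed into Theorem~A and into Duarte's theorem. The proof of the corollary reduces to a short bookkeeping argument that verifies the hypothesis of Theorem~A in each parameter regime and, in the large-$a$ case, bridges the gap between an arbitrary $a$ and the open dense set of parameters carrying elliptic orbits by a small $C^\infty$-adjustment of~$a$.
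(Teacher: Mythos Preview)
Your argument is correct and matches the paper's (implicit) proof: apply Theorem~A directly when an elliptic periodic point is available---via the explicit fixed point for small $|a|$ and via Duarte's theorem for an open dense set of large $a$---and bridge to the remaining large parameters by continuity of $a\mapsto T_a$ in $C^\infty$. One tiny remark: at $a=0$ both fixed points are parabolic rather than elliptic, but they are still non-hyperbolic, so Theorem~A applies all the same.
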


We note that a large set (of almost full Lebesgue measure) in a neighborhood of a generic elliptic point of an area-preserving $C^r$-diffeomorphism with $r\geq 4$ consists of points with zero Lyapunov exponent (the points on KAM-curves \cite{Ko54}). Our result, nevertheless, shows that the Lebesgue measure of the points with positive maximal Lyapunov exponent in a neighborhood of any elliptic point can be positive too. 

The proof of Theorem A occupies Sections 2-6 of this paper. In Section 1 we remind certain background information pertinent to this work.

\thanks{This work was partially financed by the project BRNUH of USPC university, the Steklov institute, the Royal Society, and the ERC project of S. Van Strien.  The second author was supported by the grant 14-41-00044 of the RSF. The first author thanks M.-C. Arnaud for informing him about this conjecture. The second author is grateful to L. M. Lerman who attracted his attention to Przytycki example. We are also grateful to M. Chaperon, S. Crovisier,  F. Przytycki and E. Pujals for interesting comments.
}
\section{Selected results and conjectures around the positive metric entropy conjecture}

The study of the instability and chaos in conservative dynamics enjoys a long tradition since the seminal work by Poincar\'e \cite{Po87} on the tree-body problem. 

\subsection{Uniformly hyperbolic maps}

An invariant compact set $K$ of the diffeomorphism $f$ of a manifold $\M$ is uniformly hyperbolic if the restriction $T\M|_K$ of the tangent bundle of $\M$ 
to $K$ splits into two $Df$-invariant continuous sub-bundles $E^s$ and $E^u$ such that $E^s$ is uniformly contracted and $E^u$ is uniformly expanded:
\[T\M|_K=E^s\oplus E^u\; , \quad \exists N\ge 1, \quad  \|Df^N|_{E^s}\|<1\; , \quad  \|Df^{-N}|_{E^u}\|<1\; .\]
Whenever $K=\M$, the diffeomorphism $f$ is called \emph{uniformly hyperbolic} or Anosov. Note that the maximal Lyapunov exponent of a uniformly hyperbolic dynamical system is positive at every point. Hence, whenever the dynamics is conservative (i.e. $f$ keeps invariant a volume form $\omega$), the metric entropy is positive.

Such dynamics are very well understood. However, the existence of the splitting of $T\M$ into two non-trivial continuous sub-bundles imposes strong
restrictions on the topology of $\M$. For instance, there exists no uniformly hyperbolic diffeomorphism of a closed two-dimensional surface different
from the torus $\T^2$. 

\subsection{Stochastic island}
The first example of a {\em non-uniformly hyperbolic} area-preserving map of the disk was given by Katok \cite{Ka79}. His construction 
started with an Anosov diffeomorphism of $\T^2$ with 4 fixed points and a certain symmetry. Then the diffeomorphism is modified so that the fixed points become non-hyperbolic and sufficiently flat (while the hyperbolicity is preserved outside of the fixed points). After that, the torus is projected
to a two-dimensional disc.  
The singularities of this projection correspond to the fixed points, and their flatness allows for making the resulting map of the disc a diffeomorphism.
The positivity of the metric entropy is inherited from the original Anosov map.

This example has been pushed forward by Przytycki \cite{Pr82}, where instead of making the fixed points non-hyperbolic he made a surgery 
to replace each of these fixed points by an elliptic island (in this case - a neighborhood of an elliptic fixed point filled by closed invariant curves)
bounded by a heteroclinic link (as defined below). 
Przytycki's example was put in a more general context by Liverani \cite{Li04}.

Recall that given a hyperbolic periodic point $P$ of a diffeomorphism $f$, the following sets are immersed smooth submanifolds: the \emph{stable} and,
respectively, \emph{unstable manifolds}
\[W^s(P; f):=\{x\in \M: d(f^n (x), P)\underset{n\to+\infty}{\to} 0\}\quad\text{and}\quad W^u(P; f):=\{x\in \M: d(f^n (x), P)\underset{n\to-\infty}{\to} 0\}.\]
We will call a $C^0$-embedded circle $L$ a \emph{heteroclinic $N$-link} if there exists $N$ hyperbolic periodic points $P_1,\dots, P_N \in L$ satisfying $L\subset \cup_{i=1,\dots,N}  W^s(P_i)\cup W^u(P_i)$. Note that a heteroclinic $N$-link is a piecewise $C^\infty$-curve with possible break points at the periodic points $P_i$. 

\begin{figure}[h!]
	\centering
		\includegraphics[width=5cm]{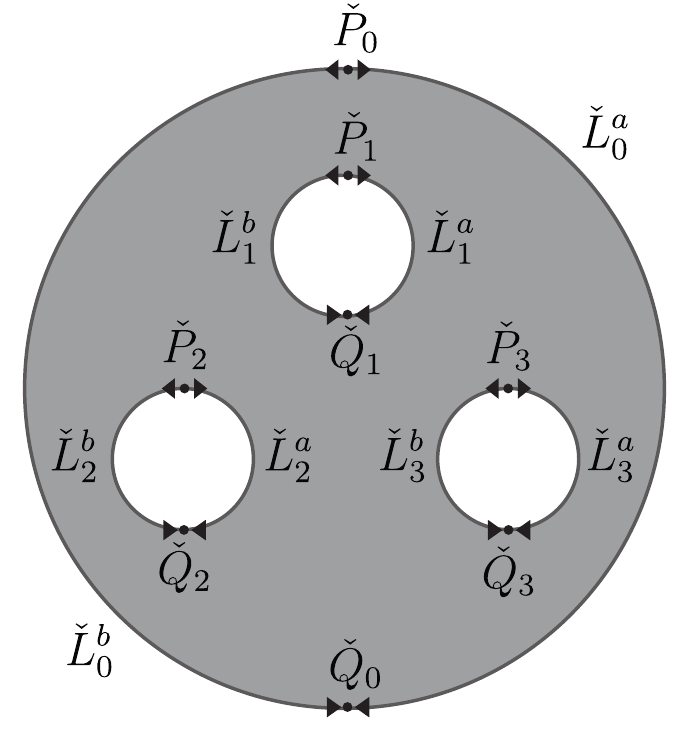}
\caption{Stochastic island}\label{fig:island}
\label{fig:oldcoord}
\end{figure}

Przytycki construction gives an example of the stochastic island in the followings sense.
\begin{defi} A \emph{stochastic island} is a two-dimensional domain $\mathcal I$ bounded by finitely many heteroclinic links such that every point in $\mathcal I$ has positive maximal Lyapunov exponent. 
\end{defi}

In this paper (see Section \ref{section1}) we build one more example of a map $f\in {\rm {\rm Diff}}^\infty_\omega(\D)$ with a stochastic island (where $\omega$
is the standard area form $dx\wedge dy$ in the unit two-dimensional disc $\D$. To this aim, we adapt to the conservative setting the Aubin-Pujals
blow-up construction \cite{AP09}.

One of the main difficulties in the Herman's entropy conjecture is that no other examples of conservative maps of a disc with positive metric entropy are known. All of these constructions are very fragile (sensitive to perturbations): for example, no $C^r$-generic finite-parameter family of area-preserving diffeomorphisms can have a parameter value for which a heteroclinic or homoclinic link exists; no entire diffeomorphism (including e.g. the standard map and any polynomial diffeomorphism) can have
a heteroclinic or homoclinic link \cite{Us80}. Still, we prove our main theorem by showing that stochastic islands appear near any elliptic point of an area-preserving diffeomorphism after a $C^\infty$-small perturbation.

\subsection{Strong regularity}
With the aim to extend the available examples of the non-uniformly hyperbolic behavior, Yoccoz launched a program called \emph{Strong Regularity} in his first lecture at Coll\`ege de France \cite{BY14}. The objective was to give a geometric-combinatorial definition of the non-uniformly hyperbolic dynamics which would serve both the one-dimensional (strongly dissipative) case, like e.g. in Jakobson theorem \cite{Ja81} and the 2-dimensional case (e.g. for the positive entropy  conjecture).  So far there are three examples of such dynamics: one-dimensional quadratic maps (e.g. implying Jakobson theorem) \cite{Yo95}, a non-uniformly hyperbolic horseshoe of dimension close to 6/10 \cite{PY09}, and H\'enon-like endomorphisms \cite{berhen} (implying Benedicks-Carleson Theorem \cite{BC2}). 

\subsection{Isotopy to identity and renormalization}
It is well-known that any symplectic diffeomorphism $F:\D\to \R^2$ is isotopic to identity \cite{Ch83}, which implies that it can always be represented as a composition $F=f_n\circ \cdots \circ f_1$  of $n$ symplectic diffeomorphisms $f_i$, each of which is uniformly $O(1/n)$-close to the identity map. Thus, one could try to prove the Herman's conjecture by using the 
Ruelle-Takens construction \cite{RT71}: take as $F$ in this formula an appropriate area-preserving map with a stochastic island, then consider $n$-disjoint $\varepsilon$-disks $\sqcup_i D_i=\psi_i(\D)$ inside $\D$ (where $\psi_i$ are uniform affine contractions) and take a perturbation $f$ of the identity map such that $f(D_i)=D_{i+1}$ and $f|_{D_i}$ is smoothly conjugate to $f_i$, i.e., $f|_{D_i}=\psi_{i+1}\circ f_i\circ \psi_i^{-1}$ for $i=1,\dots, n-1$, and
$f|_{D_n}=\psi_{1}\circ f_n\circ \psi_n^{-1}$. Then, $f^n|_{D_1}$ will be smoothly conjugate to $F$ and, hence, would have positive metric entropy. 
However, since the conjugacies $\psi_i^{-1}$ expand with a rate at least $\varepsilon^{-1}$, we observe that the $C^r$-norm of $f$ is then $\asymp 1/(n\epsilon^r)$. As the $n$ discs $D_i$ are disjoint, it follows that $n \epsilon^{2}\le 1$, so $f$ can, a priori, be $\asymp \epsilon^{2-r}$ far, in the
$C^r$-norm, from the identity. 

Therefore, this construction does not produce the result for $r\geq 2$, although one can create $C^1$-close to identity maps with positive metric entropy in this way (in fact, every area-preserving dynamics can be realized by iterations of $C^1$-close to identity maps exactly by this procedure
). 
In \cite{NRT64}, Newhouse-Ruelle-Takens pushed forward the argument to obtain the $C^2$-case for torus maps. 
Fayad \cite{Fayad} also proposed a trick to cover the $C^{2}$-case for disk maps, but his method does not work in the $C^r$-case if $r\ge 3$. 

We bypass the problem by using symplectic polynomial approximations of \cite{Tu03} instead of the isotopy. In this way, one $C^r$-approximates
any symplectic diffeomorphism $F$ by the product $f_n\circ \cdots \circ f_1$ of symplectic diffeomorphisms of a very particular form (Henon-like maps).
For these maps, the conjugating contractions $\psi_i$ can be made very non-uniform, allowing for an arbitrarily good approximation of every dynamics by
iterations of $C^r$-close to identity maps for all $r$, see \cite{T15}. The product $f_n\circ \cdots \circ f_1$ is only an approximation of $F$, and it is still not known if every area-preserving dynamics can be exactly realized by iterations of $C^r$-close to identity maps.
The main technical novelty of this paper is to show that some maps
with stochastic islands can.

\subsection{Stochastic sea and elliptic islands}
The main motivation for the positive metric entropy conjecture is the amazing complexity of dynamics of a typical area-preserving map.
Let us stress that no conservative dynamics are understood with certainty, except for those which are semi-conjugate
to a rotation \cite{AK70} or to an Anosov map. The reason is that hyperbolic and non-hyperbolic elements are often inseparable. 

Thus, it was discovered by Newhouse \cite{Ne68,Ne79} that a uniformly-hyperbolic Cantor set can be {\em wild}, i.e., its stable and unstable manifolds
can have tangencies, and these non-transverse intersections cannot all be removed by any $C^2$-small perturbation of the map.
Moreover, Newhouse showed \cite{Ne77} that a $C^r$-small perturbation of an area-preserving map with a wild set creates elliptic periodic orbits  
which accumulate to the wild hyperbolic set. 
 
Newhouse theory was applied and further developed by Duarte. He showed in \cite{Du94} that for all $a$
large enough the ``chaotic sea'' observed in the standard map (\ref{chirikov}) contains a wild hyperbolic set $K$,
and for a Baire generic subset of this interval of $a$ values the map has infinitely many generic elliptic periodic points, which accumulate 
on $K$. Recall that a generic elliptic point of period $k$ for a map $f$ is surrounded, in its arbitrarily small neighborhood, by uncountably 
many smooth circles (KAM-curves), invariant with respect to $f^k$. The map $f^k$ restricted to such curve is smoothly conjugate to
an irrational rotation (so the Lyapunov exponent is zero). The set occupied by the KAM curves has positive Lebesgue measure, and their density
tends to $1$ as the elliptic point is approached. An invariant curve bounds an invariant region that contains the elliptic point,
such regions are called \emph{elliptic islands}. 

In \cite{Duarte99, Duarte08}, Duarte showed that small perturbations, within the class ${\rm {\rm Diff}}^r_\omega$, of any 
area-preserving surface diffeomorphism with a {\em homoclinic tangency}
(the tangency of the stable and unstable manifolds of a saddle periodic orbit) 
lead to creation of a wild hyperbolic set and to infinitely many coexisting elliptic points (and elliptic islands).
In turn, it was shown in \cite{MR97, GT10} that near any elliptic point a homoclinic tangency to some saddle periodic orbit 
can be created by a $C^r$-small perturbation. 

Altogether, this gives a quite complicated picture of generic conservative dynamics: within the stochastic sea there are elliptic islands,
inside elliptic islands there are small stochastic seas, etc. By \cite{GST07,GT10}, it is impossible to describe such dynamics in full detail.
In fact, even most general features are presently not clear: for instance, we have no idea if the observed stochastic sea represents
a transitive invariant set, or if it has a positive Lebesgue measure (though, by Gorodetski \cite{Go12}, it may contain uniformly-hyperbolic subsets 
of Hausdorff dimension arbitrarily close to 2).

The inherent inseparability of the hyperbolic and elliptic behavior even suggests the following provocative question, communicated 
to us by Fayad.
\begin{question}\label{qfayad}
Does an open set of area-preserving $C^\infty$-diffeomorphisms exist
with the following property: for each diffeomorphism belonging to this subset the complement to the union of the KAM curves has zero Lebesgue measure?
\end{question}

Maps with this property have zero metric entropy, so our Theorem A implies the negative answer to this question (a KAM curve is always a limit of non-hyperbolic periodic points). However, it is still possible that a
generic (i.e., belonging to a countable intersection of open and dense subsets) non-hyperbolic map from ${\rm Diff}^\infty_\omega$ has zero metric entropy. 
 
In \cite{AB16}, the complement $U$ to the set of all essential invariant curves of a symplectic twist map was considered. Any connected component of $U$
(the Birkhoff ``instability zone'') is bounded by two invariant topological circles $C_1$ and $C_2$. It is shown in \cite{AB16}, that either $C_i$ 
 is a heteroclinic link (a scenario as much improbable as exhibiting a stochastic island, e.g., it is impossible for entire maps), or the Lyapunov 
exponent of any invariant measure supported by $C_i$ is zero ($i=1,2$). By the results of Furman \cite{Fu97}, the latter alternative implies that the 
convergence\footnote{
By the works of Birkhoff, Mather, and Le Calvez  \cite{Bi24, Ma91, LeC87}, there exists an orbit whose $\alpha$-limit set is in $C_1$ and the $\omega$-limit set is in $C_2$.} of any orbit in the instability zone to one of these curves $C_i$ is at most sub-exponential. 
Thus, it is hard to see how a transitive invariant set with strictly positive maximal Lyapunov exponents can have $C_i$ in its closure,
if $C_i$ is not a heteroclinic link.
 
The twist property is fulfilled near a generic elliptic point, hence the results of \cite{AB16} hold true there. Therefore, it seems probable
that if the Sinai conjecture is correct, then
the positive metric entropy is achieved by sets distant from KAM curves. This seems to be consistent with the numerical observations \cite{Me94}. 
%
%
%

\subsection{Stochastic perturbation of the standard map}
In higher dimension, more possibilities exist for creating examples with positive metric entropy. Thus, it was shown in \cite{BC14}
that, for large values of the parameter $a$, a skew product of the standard map over an Anosov map is non-uniformly hyperbolic
and displays non-zero Lyapunov exponents for Lebesgue almost every point. 
Recently, Blumenthal-Xue-Young \cite{BXY17} used a similar argument for random perturbations of the standard map with
large $a$ and also showed the positivity of metric entropy. 

\subsection{Genericity results}
A recent breakthrough by Irie and Asaoka \cite{AI16} showed, from a cohomological argument, that for any closed surface $(\M,\omega)$
a generic map from ${\rm {\rm Diff}}^\infty_\omega(\M)$ has a dense set of periodic points. Hence, if such map is weakly-stable, then it has a dense set of hyperbolic periodic points. A natural conjecture is, then, the structural stability of weakly-stable maps from ${\rm {\rm Diff}}^\infty_\omega(\M)$; this would be a counterpart of the Lambda lemma from holomorphic dynamics \cite{MSS, Ly84, LD13,BD14}. 

Another natural conjecture would be that the weakly-stable maps from ${\rm {\rm Diff}}^r_\omega(\M)$ are uniformly hyperbolic, $1\le r\le \infty$. For $r=1$ this result have been proven by Newhouse \cite{Ne77}. For any $r\ge 2$ this question is open, as well as its dissipative counterpart -- a conjecture by Ma\~n\'e \cite{Ma82}. Since uniformly hyperbolic maps from ${\rm {\rm Diff}}^r_\omega(\M)$ have positive metric entropy, this conjecture and our Theorem A
would imply that maps with positive metric entropy are dense in ${\rm {\rm Diff}}^\infty_\omega(\M)$.

Because of the meagerness of the heteroclinic links, the genericity of positive metric entropy does not follow from our result. In fact, one
can conjecture that a $C^r_\omega$-generic surface diffeomorphism is either uniformly hyperbolic, or of zero entropy. We do not have an opinion in this regard.
In the $C^1$-topology, this statement was a conjecture by Ma\~n\'e, now proven by Bochi \cite{B02}; in higher regularity it is completely open. 

A milder version of this problem can be formulated as the following question due to Herman \cite{He98}:
\begin{question}\label{eli}
Given a surface $(\M,\omega)$, is there an open subset of ${\rm Diff}^r_\omega(\M)$ where maps with zero metric entropy
are dense?
\end{question}
A candidate for such dense set could be a hypothetical set of maps from Question \ref{qfayad} (the maps for which the union of
all KAM curves would have full Lebesgue measure). Note that by the upper semi-continuity of the maximal Lyapunov exponent, a positive answer to Question \ref{eli} would also imply the local genericity of maps with zero metric entropy.

\subsection{Universal dynamics}
In \cite{Tu03}, the richness of chaotic dynamics in area-preserving maps was characterized by the concept of a {\em universal map}.
Given a $C^r_\omega$-diffeomorphism $f$ ($r=1,\dots,\infty$) of a two-dimensional surface $(\M,\omega)$, 
its behavior on ever smaller spatial scales
can be described by its {\em renormalized iterations} defined as follows. Let $Q$ be a $C^r$-diffeomorphism into $\M$ from some disc in 
$\R^2$. Assume that the domain of definition of $Q$ contains the unit disc $\D$ and the domain of $Q^{-1}$ in $\M$ contains
$f^n(Q(\D))$ for some $n\geq 0$. We also assume that the Jacobian $\det\; DQ$ is constant in the chart $(x,y)$ on $\M$ where the area-form $\omega$ is standard: $\omega=dx\wedge dy$.
\begin{defi}\label{renit}
The map $\D \to\R^2$ defined as
\[\hat F_{Q,n} = Q^{-1}\circ f^n|_{Q(\D)}\circ Q\]
is a renormalized iteration of $f$.
\end{defi}
Note that since the Jacobian of $Q$ is constant, all renormalized iterations of $f$ preserve the standard area-form in $\R^2$. 
\begin{defi}[Universal map]
A diffeomorphism $f\in {\rm {\rm Diff}}^\infty_\omega(\M)$ is universal if the set of its renormalized iterations is $C^\infty$-dense
among all orientation-preserving, area-preserving diffeomorphisms $\D \to\R^2$.
\end{defi}
By this definition, the dynamics of a single universal map approximate, with arbitrarily good precision, all symplectic maps of the unit disc.

In the general non-conservative context this notion was used in \cite{Tu10,T15}. In $C^1$ category, the concept of universal dynamics was independently proposed by Bonatti and Diaz \cite{BD02}. 

The universal dynamics might sound difficult to materialize, but it is not. It is shown in \cite{GST07} that an arbitrarily small, in $C^\infty_\omega$,  perturbation of any area-preserving map with a homoclinic tangency can create universal dynamics. Moreover, universal maps form a Baire generic subset of the Newhouse domain - the open set in ${\rm Diff}^\infty_\omega(\M)$ comprised of maps with wild hyperbolic sets. In \cite{GT10}, it
was shown that a $C^\infty_\omega$-generic diffeomorphism of $\M$ with an elliptic point is universal\footnote{The results in \cite{GST07,GT10} were also proven in the space of real-analytic area-preserving maps.}.

Consequently, any diffeomorphism $f\in {\rm Diff}^\infty_\omega(\M)$ with an elliptic point can be perturbed in such a way that its iterations
would approximate any given area-preserving dynamics and, in particular, the dynamics with positive metric entropy. We stress that this
observation {\em is not sufficient for a proof of our Theorem A}. Indeed, if $f$ is a $C^\infty_\omega$-diffeomorphism with an elliptic point and $g$ is a $C^\infty_\omega$-diffeomorphism of $\D$ with positive metric entropy, the only thing we can conclude from \cite{GST07,GT10} 
is that arbitrarily close to $f$ in ${\rm Diff}^\infty_\omega(\M)$ there exists a diffeomorphism whose iteration restricted to a certain disc is smoothly
conjugate to a map $G$ which is as close as we want to $g$ in ${\rm Diff}^\infty_\omega(\D)$. However, this map $G$ does not need to inherit
the positive metric entropy from $g$. Overcoming this problem is the main technical point of this paper.

\section{Proof of the main Theorem}\label{section0}

We start with constructing a map with a stochastic island with certain additional properties.
In section \ref{section1}, we give a precise description of the construction similar to those in \cite{Ka79, AP09,Pr82}, which
produces a $C^\infty_\omega$-diffeomorphism $\check F: \D\to\D$ with a stochastic island $\mathcal I$  bounded by four heteroclinic bi-links $\{\check L_i^a\cup \check L^b_i: 0\le i\le 3\}$. Each $\check L_i^a\cup \check L_i^b$ is a $C^\infty$-embedded circle included in the stable and unstable manifolds of hyperbolic fixed points   $\check P_i, \check Q_i$:
\[\check L_i^a\cup \check L_i^b \subset   W^u(\check P_i; \check F)\cup W^s(\check Q_i; \check f)\; .\]
The island of $\check F$ is depicted in Fig. \ref{fig:island}. 
For every $F$ which is $C^1$-close to $\check F$, for every $0\le i\le 3$, the \emph{hyperbolic continuations} of $\check P_i$ and $\check Q_i$  are the uniquely defined hyperbolic $F$-periodic orbits close to $\check P_i$ and $\check Q_i$. 

We also show the following
\begin{prop}\label{KPAP}
For every conservative map $F$ which is $C^2$-close to $\check F$, let $P_i$ and $Q_i$ be hyperbolic
continuations of $\check P_i, \check Q_i$. If $\{W^u(P_i; F)\cup W^s(Q_i;F): 0\le i \le 3\}$ define four heteroclinic bi-links $\{L_i^a\cup  L^b_i: 0\le i\le 3\}$ which are $C^2$-close to $\{\check L_i^a\cup  \check L^b_i: 0\le i\le 3\}$, then they bound a stochastic island. In particular, the metric entropy of $F$ is positive.  
\end{prop}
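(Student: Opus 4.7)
The proof strategy is to argue that the positive–Lyapunov–exponent property of the stochastic island of $\check F$ is encoded in a $C^1$-open condition, namely the existence of a continuous uniformly–expanding $D\check F$-invariant cone field on $\overline{\mathcal I}$, which will then persist under the $C^2$-perturbation together with the bi–link boundary.

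First, I would extract from the Aubin–Pujals–type construction of Section \ref{section1} a continuous $D\check F$-invariant unstable cone field $\mathcal C^u$ defined on $\overline{\mathcal I}$ together with constants $C>0$ and $\lambda>1$ such that $\|D\check F^n(x)v\|\ge C\lambda^n\|v\|$ for every $x\in\overline{\mathcal I}$, $v\in\mathcal C^u(x)$ and $n\ge 0$. On the open island $\mathcal I$ this is inherited from the Anosov map underlying the blow–up: the hyperbolic splitting of the Anosov model pushes forward to a dominated splitting away from the corner periodic points. At the hyperbolic corners $\check P_i,\check Q_i$, the cone $\mathcal C^u$ is chosen around the unstable direction of the linearization, and the transversality of $\check L_i^a\cup \check L_i^b$ at the periodic points (combined with the fact that every point of $\mathcal I$ spends only bounded time in any fixed neighborhood of the corners) glues the interior splitting to the corner linearizations continuously and with uniform expansion.

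The core observation is then that cone–invariance and uniform expansion are $C^1$-open conditions on a compact invariant set. Thus, for $F$ sufficiently $C^2$-close to $\check F$ (hence $C^1$-close on a neighborhood of $\overline{\mathcal I}$), a slight enlargement of $\mathcal C^u$ is $DF$-invariant on a neighborhood of $\overline{\mathcal I}$, with a uniform expansion rate $\lambda'>1$. By hypothesis, the perturbed bi–links $L_i^a\cup L_i^b$ are $C^2$-close to the unperturbed ones and lie in $W^u(P_i;F)\cup W^s(Q_i;F)$, hence they are $F$-invariant embedded circles. They therefore bound a closed $F$-invariant domain $\mathcal I_F$ which is Hausdorff-close to $\overline{\mathcal I}$ and, in particular, has positive Lebesgue measure close to $\omega(\mathcal I)$.

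Finally, for every $x\in\mathcal I_F$ and every $v\in\mathcal C^u(x)$ one has $\|DF^n(x)v\|\ge C'(\lambda')^n\|v\|$, so $\lambda(x)\ge\log\lambda'>0$. This shows $\mathcal I_F$ is a stochastic island in the sense of Definition 1.1, and by Pesin's formula
\[
h_\omega(F)=\int_{\M}\lambda(x)\,\omega(dx)\;\ge\;\log(\lambda')\cdot\omega(\mathcal I_F)\;>\;0.
\]
The main obstacle is the first step: building the cone field $\mathcal C^u$ continuously across the corner points and verifying that the expansion rate is uniform there. This requires matching the hyperbolic splitting inherited from the Anosov model on the interior of $\mathcal I$ with the linearizations at the corner hyperbolic fixed points, and will presumably be set up directly in the explicit blow-up construction of Section \ref{section1}; once it is in place, the persistence argument and the application of Pesin's formula are essentially formal.
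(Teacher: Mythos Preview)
Your approach has a genuine gap: the continuous, uniformly-expanding $D\check F$-invariant cone field on $\overline{\mathcal I}$ that you postulate does not exist. This is precisely where the non-uniform hyperbolicity of the construction bites. Consider a heteroclinic point $x\in\check L_i^a$ close to the saddle $\check P_i$. The tangent to the link at $x$ lies in $E^u(\check P_i)$ (since $\check L_i^a\subset W^u(\check P_i)$), so by continuity any cone $\mathcal C^u(x)$ close to $\mathcal C^u(\check P_i)\ni E^u(\check P_i)$ must contain this tangent vector $v$. But the forward orbit of $x$ converges to $\check Q_i$, where the link lies in $W^s(\check Q_i)$; hence $\|D\check F^n(x)v\|\to 0$, contradicting uniform expansion. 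Your auxiliary claim that orbits spend bounded time near the corners is likewise false: heteroclinic orbits spend infinite forward time near $\check Q_i$, and interior orbits near the boundary spend unbounded time near both saddles. The system is genuinely non-uniformly hyperbolic on $\overline{\mathcal I}$, so the $C^1$-openness shortcut is unavailable.

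The paper's proof (Corollary~\ref{remf2} via Theorem~\ref{claim7}) proceeds instead by \emph{undoing the blow-up}. The perturbed map $F$ preserving the bi-links is lifted through the conjugacies $\pi,\pi_0,\pi_1,\pi_2$ to a map $\tilde F$ on $\T^2$ preserving the circles $\partial V_i$, and then the surgery $\Psi$ is reversed to produce $\tilde F_A$ on $\T^2\setminus\cup_i\{\Omega_i\}$. The key technical point is Lemma~\ref{closemap}: although $\Psi$ is singular at the $\Omega_i$, a polar-coordinate computation shows the singularities cancel and $\tilde F_A$ is uniformly $C^1$-close to the linear Anosov map $F_A$ in Cartesian coordinates. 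On the Anosov side the positive cone is trivially invariant with expansion $\ge 4$, yielding $\|D\tilde F_A^n\|\ge 4^n$ everywhere. Transferring this back through $\Psi$ to $\tilde F$ on $\hat I$ requires the case analysis of Lemma~3.7, since $\|D\Psi^{\pm1}\|$ are unbounded: either the orbit has a subsequence bounded away from $\cup_i\partial V_i$ (where the conjugacy has bounded distortion), or it converges to one of the boundary saddles and inherits that saddle's positive exponent. This dichotomy is exactly what accommodates the non-uniformity your cone-field argument tried to avoid.
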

The proof is given by Corollary \ref{remf2} of Theorem \ref{claim7} in Section \ref{section1}. It follows from a new and short argument which implies also some results of \cite{AP09}.

We will call a stochastic island {\em robust relative link preservation} if it satisfies this property: for every $C^r$-small perturbation of the map, if the stable and unstable manifolds that form the heteroclinic link do not split, then they bound a stochastic island. Proposition \ref{KPAP} shows that
the stochastic island $\check I$ of the map $\check F$ satisfies this robustness property (with any $r\geq 2$), and the same holds true for the islands bounded by the four heteroclinic bi-links $\{L_i^a\cup  L^b_i: 0\le i\le 3\}$ (if these links exist) of any map $C^2$-close to $\check F$.

In Section \ref{section2} (see Proposition \ref{suitpro}), we construct a coordinate transformation 
$\mathring \phi\in {\rm Diff}^\infty_\omega(\R^2)$ such that
$\mathring {\mathcal I}:= \mathring \phi(\mathcal I)$ is a ``\emph{suitable}'' island for $\mathring F:= \mathring \phi\circ \check F\circ \mathring \phi^{-1}$. The suitability conditions are described in Definition \ref{suitabledef}. They include the requirement that
certain segments of the stable and unstable manifolds of the hyperbolic fixed points $\mathring P_i:= \mathring \phi(P_i)$ and $\mathring Q_i=\mathring \phi(Q_i)$, $0\le i\le 3$, are strictly horizontal, i.e., they lie in the lines $y=const$ where $(x,y)$ are coordinates in $\R^2$. 
Moreover, the map $\mathring F$ near these segments has a particular form, which allows us to establish the following result
in Section \ref{section3} (this is the central point of our construction):
\begin{prop}\label{closing_bi-link}
Given any finite $r\ge 2$, for every $F\in {\rm Diff}^{r+8}_\omega(\D)$ which is $C^{r+8}$-close to $\mathring F$, there exists a $C^r$-small function $\psi: \R\to\R$
such that the map $\bar F$ defined as 
$$\bar F= S_\psi \circ F\;, \;\mbox{    where     } S_\psi:= (x,y)\mapsto (x,y+\psi(x))\;,$$
has the following property: For the hyperbolic continuations $P_i$ and $Q_i$ of the fixed points $\mathring P_i$ and, respectively,
$\mathring Q_i$, the union $W^s(P_i; \bar F)\cup W^u(Q_i; \bar F)$ 
defines a heteroclinic bi-link $L_i^a\cup L_i^b$ which is $C^r$-close to $\mathring L_i^a\cup \mathring L_i^b$, for each
$i=0,\dots,3$.
\end{prop}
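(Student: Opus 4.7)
The approach is to use the suitable coordinates of Proposition 2.1 to recast the connection-restoration problem as a finite-dimensional equation solvable by the implicit function theorem. The observation driving everything is that in these coordinates the eight heteroclinic arcs of $\mathring F$ composing the boundary of $\mathring{\mathcal I}$ are horizontal segments $y=y_j^*$, and $S_\psi(x,y)=(x,y+\psi(x))$ translates each such arc vertically by $\psi(x)$. Thus $\psi$ supplies an infinite-dimensional control that adjusts the perturbed invariant manifolds in the direction transverse to the original connections, precisely where they need to match.

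For each of the $j=1,\ldots,8$ arcs, let $P^{\mathrm{L}}_j(F,\psi),P^{\mathrm{R}}_j(F,\psi)$ denote the hyperbolic continuations of the two adjacent saddles. Extending their local invariant manifolds by iteration of $\bar F$, one obtains graphs $y=u_j(x;F,\psi)$ and $y=s_j(x;F,\psi)$ over an interval containing the original arc and $C^r$-close to the horizontal line $y=y_j^*$. Define the splitting function $\Delta_j(F,\psi):=u_j(x_j^*;F,\psi)-s_j(x_j^*;F,\psi)$ at a base abscissa $x_j^*$ inside a fundamental domain of the heteroclinic dynamics; by $\bar F$-invariance the conditions $\Delta_j=0$ for all $j$ are equivalent to the formation of the four bi-links. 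A Melnikov computation based on the explicit form of $S_\psi$ and the horizontality of the arcs yields
\[D_\psi\Delta_j(\mathring F,0)\cdot\psi=\sum_{n\in\Z}c_n^{(j)}\,\psi\bigl(x_n^{(j)}\bigr),\]
where $(x_n^{(j)})_{n\in\Z}$ is the $x$-projection of an $\mathring F$-orbit on arc $j$ and the $c_n^{(j)}$ are nonzero weights decaying exponentially in $|n|$, so the sum is absolutely convergent for any continuous $\psi$.

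The eight orbits $(x_n^{(j)})_n$ have pairwise separated closures in $\R$, because they lie on different horizontal lines or on the same line but between different pairs of consecutive saddles. One can therefore select bump functions $\phi_1,\ldots,\phi_8\in C^\infty_c(\R)$ each supported near the closure of the orbit of a single arc, so that the $8\times 8$ matrix $(D_\psi\Delta_j(\mathring F,0)\cdot\phi_k)_{j,k}$ is diagonally dominant and hence invertible. This provides a bounded right inverse $\R^8\to C^r(\R,\R)$ of the linearization with image in the finite-dimensional span of $\phi_1,\ldots,\phi_8$, and the implicit function theorem then produces a $C^r$-small $\psi=\psi(F)$ solving $\Delta_j(F,\psi)=0$ for all $j$ and depending continuously on $F$. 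Once each splitting vanishes, the invariance of $u_j$ and $s_j$ under $\bar F$ forces them to coincide along the whole arc, yielding the four bi-links $L_i^a\cup L_i^b$ as claimed, with $C^r$-closeness to the originals inherited from the $C^r$-smallness of $\psi$ and of $F-\mathring F$.

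\textbf{Main obstacle.} The delicate point is controlling the $C^r$-regularity of the globalized graphs $(u_j,s_j)$ as functions of $(F,\psi)$ over a full fundamental domain. Local invariant manifolds inherit the smoothness of $\bar F$, but iterating to extend them across a fundamental domain produces the usual loss of derivatives, proportional to the number of iterates and the ratios of hyperbolic eigenvalues; this loss is what the assumption that $F$ is $C^{r+8}$-close to $\mathring F$ is meant to pay for. Verifying quantitatively that the right inverse constructed above remains bounded in $C^r$ and that the resulting $\psi$ satisfies $\|\psi\|_{C^r}\lesssim\|F-\mathring F\|_{C^{r+8}}$ is the core technical estimate that closes the argument.
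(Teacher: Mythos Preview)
Your proposal has a fundamental gap: matching the stable and unstable graphs at a single abscissa $x_j^*$ does \emph{not} force them to coincide along the arc. The condition $\Delta_j=0$ creates one heteroclinic orbit (and, by invariance, its discrete $\bar F$-iterates), but between successive orbit points the two invariant graphs are free to differ. A heteroclinic \emph{link} is an infinite-codimension condition: the splitting function $u_j(\cdot)-s_j(\cdot)$ must vanish \emph{identically} on a full fundamental interval. Eight scalar equations with eight bump-function controls cannot achieve this, so the implicit-function step as written does not produce the bi-links.

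The paper's argument is accordingly infinite-dimensional. For each bi-link it introduces time-energy coordinates on a neighborhood of a fundamental interval, so that the splitting is a $\tau$-periodic function $M^a$ (resp.\ $M^b$), and it studies the operator $\psi\mapsto M$ on a space of $\tau$-periodic $C^{k-2}$ functions. The key observation is that $S_\psi$ acts on graphs by the translation $w\mapsto w-\psi$, hence linearly and $C^\infty$-smoothly in $\psi$ regardless of the regularity of $\psi$; combined with the $C^{k-1}$ graph-transform regularity of the remaining factors, this makes the operator $C^1$. At $F=\mathring F$ the operator $\tilde\psi\mapsto M^a$ is exactly the identity after writing $\psi=\rho\tilde\psi$ with a suitable partition of unity $\rho$, so a contraction argument kills $M^a$ entirely. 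For $L^b$ an obstruction appears: the operator at $\mathring F$ is identity plus a nontrivial contracting piece, and one must first observe (using area preservation and the already-restored link $L^a$) that $M^b$ has zero mean, then solve on the zero-mean subspace. Your scalar Melnikov computation and finite right inverse miss both this functional character of the splitting and the zero-mean obstruction for $L^b$; the loss of derivatives you flag is real, but it arises from building $C^{k-1}$ time-energy charts and composing graph transforms, and is paid for stepwise ($r+8\to r+4\to r+2\to r$), not by iterating over a fundamental domain.
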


\begin{rema}\label{postbilink} We notice that by Proposition \ref{KPAP}, the map $\bar F=S_\psi\circ F$ has a stochastic island, robust relative link preservation, and its metric entropy is positive.
\end{rema}

With this information, we can now complete the proof of the main theorem.

\noindent{\bf Proof of Theorem \ref{main}.}
Let $f\in {\rm Diff}^\infty_\omega(\M)$ have a non-hyperbolic periodic point. By an arbitrarily small perturbation of $f$ one can make this point
elliptic. 
Then, by \cite{GT10}, by a $C^\infty_\omega$-small perturbation of
$f$, one can create, in a neighborhood of the elliptic point, a
hyperbolic periodic cycle whose stable and unstable manifolds coincide (those define two heteroclinic links), see Fig. \ref{elband}.

\begin{figure}[h!]
	\centering
		\includegraphics[width=5cm]{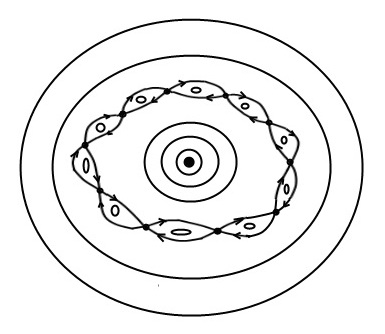}

\caption{
Construction of a flat homoclinic tangency from [GT10]: Given an area-preserving map with an elliptic periodic point $O$, one can add a $C^\infty$-small perturbation such that the first-return map in a small neighborhood of $O$ will be integrable. A change of rotation number at $O$ leads to the birth of a resonant garland where the stable and unstable manifolds of a hyperbolic $q$-periodic  coincide. 
}\label{elband}
\end{figure}
The important thing here is that by a small perturbation of the original map $f$, we create a periodic point with a flat homoclinic tangency. After that, we apply the following result, proven in Section \ref{section5} (see Corollary \ref{qq}).
\begin{prop}\label{proprescalling}
Let $f\in {\rm Diff}^{\infty}_{\omega} (\M)$ have a hyperbolic periodic point with a flat homoclinic tangency. Then, there exists a $C^\infty$-dense
subset $\cal F$ of ${\rm Diff}^\infty_\omega(\D,\R^2)$ such that for every $F\in {\cal F}$, every $r\geq 2$, every $C^r$-smooth function $\psi:\R\to\R$ whose $C^r$-norm is bounded by $1$,
and every $\varepsilon>0$, there exists a diffeomorphism $\hat f\in {\rm Diff}^r_{\omega} (\M)$ such that\\ 
$\bullet$ $d_r(f,\hat f) <\varepsilon$, where $d_r$ is the $C^r$-distance,\\
$\bullet$ the composition $S_\psi\circ F$ is equal to a renormalized iteration of $\hat f$, where $S_\psi(x,y)= (x,y+\psi(x))$. 
\end{prop}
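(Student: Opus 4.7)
\emph{Proof plan.} I would realize $S_\psi\circ F$ exactly as a renormalized iteration of a carefully constructed perturbation $\hat f$ of $f$, using the flatness of the homoclinic tangency to eliminate the usual H\'enon-like obstructions that appear in the standard rescaling near a generic tangency. Replace $f$ by an iterate fixing the hyperbolic point $O$, and pass to coordinates in which $f$ is linear near $O$, $f(u,v)=(\lambda u,\lambda^{-1}v)$. By the flat tangency hypothesis (see Figure~\ref{elband}), there is an arc $\gamma\subset W^s(O)\cap W^u(O)$ along which the stable and unstable manifolds coincide to infinite order; fix coordinates around a reference point $p_*\in\gamma$ in which $\gamma$ is a horizontal segment and the area form is standard. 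Define $\mathcal F$ as the set of finite compositions of generalized H\'enon polynomial maps $(x,y)\mapsto (y,-x+p(y))$ on $\D$; by the symplectic polynomial approximation theorem of \cite{Tu03}, $\mathcal F$ is $C^\infty$-dense in ${\rm Diff}^\infty_\omega(\D,\R^2)$.

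Given $F=H_m\circ\cdots\circ H_1\in\mathcal F$, $\psi$ with $\|\psi\|_{C^r}\le 1$, and $\varepsilon>0$, I would pick a chain of small boxes along the coincidence arc and the orbit of $p_*$: a box $B_0$ around $p_*$ serving as the domain of the renormalization chart $Q$, together with intermediate target boxes $B_1,\ldots,B_{m+1}=B_0$ hit by $f^N$ after a long excursion near $O$, for a suitable large $N=\sum k_i$. In anisotropic linear rescaling coordinates on each $B_i$ with factors $(\mu,\mu^{-1})$, where $\mu=\lambda^{-K}$ for $K$ large, the unperturbed transit $f^{k_i}:B_{i-1}\to B_i$ is $C^r$-close to the identity to arbitrarily high order, precisely because $\gamma$ is an infinite-order coincidence arc of $W^s$ and $W^u$. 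I would then add to $f$ area-preserving bump perturbations $\delta_i$ supported inside each $B_i$, chosen so that in the rescaled coordinates the modified transit is exactly the H\'enon factor $H_i$; an extra bump $\delta_{m+1}$ near $B_0$ produces exactly the shear $S_\psi$ in the final rescaled chart. Setting $\hat f=f+\sum_i\delta_i$ and taking $Q$ to be the affine rescaling $\D\to B_0$ with factors $(\mu,\mu^{-1})$, the renormalized iteration $Q^{-1}\circ\hat f^N\circ Q$ telescopes into $S_\psi\circ H_m\circ\cdots\circ H_1=S_\psi\circ F$ exactly.

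The $C^r$-smallness of $\hat f-f$ holds because each bump $\delta_i$, being an order-one symplectic polynomial in rescaled coordinates at scale $\mu$, has $C^r$-norm of order $\mu^{r-1}$ in the original coordinates, tending to zero as $K\to\infty$; the number of bumps depends only on the decomposition of $F$ and on $\psi$, not on $\varepsilon$, so a sufficiently large $K$ secures $d_r(f,\hat f)<\varepsilon$. The main obstacle --- and the reason the flat tangency is essential --- is ensuring \emph{exact} equality rather than approximation: for a generic (non-flat) tangency the rescaled unperturbed transit always contains a quadratic H\'enon-like term that would obstruct matching an arbitrary prescribed polynomial factor, whereas infinite-order flatness annihilates all such corrections, so that the only dynamics in rescaled coordinates between transits is exactly what the bumps prescribe. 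Secondary challenges are to verify area-preservation and mutual non-interference of the bumps, and to ensure that the domain of $Q^{-1}$ covers $\hat f^N(Q(\D))$ --- all of which constrain the precise geometry of the chain of boxes and the integers $k_i$.
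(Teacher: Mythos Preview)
Your overall strategy --- approximate $F$ by a product of H\'enon maps via \cite{Tu03} and realize each factor as one round near the homoclinic --- matches the paper's. But there is a genuine gap in how the H\'enon factors are produced, and your identification of $\mathcal F$ is off.

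The H\'enon-like structure does \emph{not} come from the bumps; it comes intrinsically from the global map $T_1=f^m$ which sends a neighborhood of $W^u_{loc}(O)$ to a neighborhood of $W^s_{loc}(O)$ and therefore swaps the stable and unstable directions. In the correct rescaling (which is not $(\mu,\mu^{-1})$ but mixes $\mu^k$ with the multiplier $\lambda^k$; see (\ref{resc})--(\ref{resc-})), the \emph{unperturbed} round $T_1T_0^k$ already reads $(X,Y)\mapsto(Y+\hat\phi_1,\,-X+C\mu^{-k}+AY+\hat\phi_2)$ with $\hat\phi_1,\hat\phi_2$ small (Lemmas~\ref{lemma6.3}--\ref{lemma6.4}). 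Crucially, the only perturbations that remain $C^r$-small after conjugation by this anisotropic scaling are shears in the most-contracted coordinate, i.e.\ $(x,y)\mapsto(x,y+\hat\psi(x))$: a general $O(1)$ symplectic map in rescaled coordinates conjugated back has a derivative of order $\mu^{-2}$ (or $\lambda^{-k}$) in one entry, so your estimate ``$C^r$-norm of order $\mu^{r-1}$'' is false for any bump that touches the first coordinate. Hence the bump can turn $C\mu^{-k}+AY$ into $\psi_i(Y)$ in the second component, but it cannot kill the residual $\hat\phi_1$ in the first.

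Thus each rescaled round is $H_{\psi_i}\circ\Phi_i$ with $\Phi_i$ close to identity but \emph{not} equal to it, and your claim that ``infinite-order flatness annihilates all such corrections'' is incorrect: the $\Phi_i$ come from the nonlinear terms of $T_0$ near the saddle and the higher-order terms of $T_1$, neither of which vanishes under the homoclinic-band hypothesis (the paper already assumes a full band, the strongest flatness, and still has $\Phi_i\neq id$). The right fix is to redefine $\mathcal F$: writing $S_\psi=H_\psi\circ\mathcal R^{-1}$ with $\mathcal R(x,y)=(-y,x)$ and appending three extra rounds with $\psi_{N'+1}=\psi_{N'+2}=0$, $\psi_{N'+3}=\psi$, one obtains $Q^{-1}\hat f^nQ=S_\psi\circ\hat F$ where
\[
\hat F=\mathcal R\,\Phi_{N'+3}\,\mathcal R^{-1}\Phi_{N'+2}\,H_{0}\Phi_{N'+1}\cdots H_{\psi_1}\Phi_1.
\]
The essential (and non-obvious) point is that the $\Phi_i$ depend on $f$, on the chosen homoclinic points, and on $k$, but \emph{not} on $\psi$ (Remark~\ref{important remark}); so $\hat F$ is fixed once these data are fixed, and $\psi$ enters only through the outer $S_\psi$. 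The set $\mathcal F$ of such $\hat F$ is dense because $\Phi_i\to id$ in $C^r$ as $k\to\infty$ and the H\'enon products are dense by \cite{Tu03}.
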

This statement is an enhanced version of 
the ``rescaling lemma'' (Lemma 6) of \cite{GST07}\footnote{While in \cite{GST07} the rescalling was done for a single round near the homoclinic tangency, here we do many rounds, similar to \cite{T15}.}.
 This proposition gives us much freedom in varying the renormalized iteration of $\hat f$  without perturbing $F$ (by composing with $S_\psi$ for an arbitrarily functional parameter $\psi$). 
 

The renormalized iterations are described by Definition \ref{renit}. Since a renormalized iteration is $C^r$-conjugate to an actual iteration of
the map $\hat f$ restricted to some small disc, it follows that by taking $F$ and $\psi$ exactly as in Proposition \ref{closing_bi-link}, (so that
$S_\psi\circ F$ will have a stochastic island, see Remark \ref{postbilink}), we will obtain that the map $\hat f$ has a stochastic island too, 
robust relative link preservation.

The stochastic island for the map $\hat F:= S_\psi\circ F$ is bounded by $4$-bi-links, each of which is equal to a $C^r$-embedded circle. As $S_\psi\circ F$ is smoothly conjugate to $\hat f^n$ for some $n>0$, it follows that the stochastic island for the map $\hat f$ is bounded $m=4n$ heteroclinic bilinks and is robust relative link preservation. We denote them by $(L_i^a\cup L_i^b)_{i=1}^{m}$.  We prove the
following result in Section \ref{proofrtoinf}:

\begin{prop}\label{rtoinf}
Given any map $\hat f\in {\rm Diff}^r_\omega(\M)$ with a stochatics island $\mathcal I$ bounded by bilinks $(L^a_i\cup L_i^b)_{i=1}^m$ such that each bilink $L^a_i\cup L_i^b$ is a $C^r$-embedded circle, arbitrarily close in $C^r$ to $\hat f$ there exists a map
$\hat f_\infty\in {\rm Diff}^\infty_\omega(\M)$ for which the bilinks persist (i.e., the hyperbolic continuations of the stable and unstable manifolds forming each bilink $(L_i^a, L_i^b)$ comprise a heteroclinic bilink for the map $\hat f_\infty$, $C^r$-close to $(L_i^a, L_i^b)$ ).
\end{prop}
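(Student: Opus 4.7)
Plan: I would construct $\hat f_\infty$ in two stages: first, a standard $C^\infty$-smoothing of $\hat f$ preserving $\omega$, and then a small area-preserving correction that restores the coincidence of the stable and unstable manifolds along each bilink arc.

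For the first stage, I mollify $\hat f$ in area-preserving Darboux charts and apply a Moser-type correction to produce a diffeomorphism $g\in\Diff^\infty_\omega(\M)$ with $d_r(g,\hat f)$ arbitrarily small. For such $g$ the hyperbolic continuations $P_i(g), Q_i(g)$ are defined, and any fixed compact pieces of $W^u(P_i(g))$ and $W^s(Q_i(g))$ are $C^r$-close to the corresponding pieces of $W^u(P_i)$ and $W^s(Q_i)$; however, these manifolds generically fail to coincide along the arcs $L_i^a, L_i^b$, so the bilinks are not automatically preserved. I quantify the failure by a splitting function $\Delta_i^\bullet$ ($\bullet\in\{a,b\}$), defined as the signed distance between the two manifolds along a fixed transverse section, restricted to a fundamental domain of the return map along the arc; each $\Delta_i^\bullet$ is a small $C^\infty$ function on this 1-dimensional domain.

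For the second stage, I seek a $C^\infty$ Hamiltonian $H$ supported in disjoint small tubular neighborhoods of the arcs (away from the periodic points) such that $\hat f_\infty:=\varphi^1_H\circ g$ satisfies $\Delta_i^\bullet(\hat f_\infty)=0$ for every $i$ and $\bullet$. The infinitesimal dependence of each splitting on $H$ is a Melnikov-type linear operator given by integration along the heteroclinic orbit, and a flow-box calculation in Darboux coordinates near each arc shows that, restricted to Hamiltonians supported near a single arc, this operator is surjective onto $C^\infty$ functions on the corresponding fundamental domain. Since the supports of the various perturbations can be chosen disjoint, the global linearized problem is surjective, and an implicit-function-theorem argument yields a $C^\infty$ Hamiltonian $H$ whose $C^{r+1}$-norm is bounded by the $C^r$-size of the initial splittings. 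The resulting $\hat f_\infty$ then lies in $\Diff^\infty_\omega(\M)$, is $C^r$-close to $\hat f$, and has bilinks $C^r$-close to the original ones, which combined with Proposition \ref{KPAP} preserves the stochastic island.

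The main obstacle is precisely the surjectivity of the Melnikov operator in the $C^\infty$ category, together with a tame right-inverse strong enough for the implicit function theorem to deliver a $C^\infty$ solution. Since the splitting condition is of infinite codimension in the space of area-preserving perturbations, a generic $C^\infty$ approximation of $\hat f$ would destroy the bilinks, and one must instead prescribe the approximation along an infinite-codimension submanifold; the feasibility of such a non-generic prescription rests on the flow-box computation together with the infinite-dimensional freedom afforded by compactly supported $C^\infty$ Hamiltonians near each arc, and verifying the tame estimates needed to pass from the $C^r$ version of this surjectivity (which is exactly the content of Proposition \ref{closing_bi-link}) to its $C^\infty$ version is where the bulk of the technical work in the proof would be located.
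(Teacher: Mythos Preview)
Your two-stage plan (smooth first, then restore the links by a Melnikov/IFT scheme) is natural, but it diverges from the paper's proof in a way that leaves a real gap. The difficulty you yourself flag---getting a $C^\infty$ solution out of an implicit-function argument whose finite-regularity version (Proposition~\ref{closing_bi-link}) \emph{loses derivatives}---is not resolved in your sketch. Passing from the $C^r$ surjectivity to a $C^\infty$ one would require a Nash--Moser scheme with genuine tame estimates on the graph-transform operators, and you do not set this up. There is also a mismatch: Proposition~\ref{closing_bi-link} is proved only for maps $C^{r+8}$-close to the \emph{special} model $\mathring F$ with its ``suitable'' bilinks, whereas here the bilinks are merely $C^r$-embedded circles for an arbitrary $\hat f$, so you would first need to conjugate into a suitable model via a $C^r$ change of coordinates, which again obstructs the $C^\infty$ conclusion.

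The paper sidesteps all of this by a different, purely geometric idea. After Zehnder-smoothing $\hat f$ to a $C^\infty_\omega$ map $f$, it does \emph{not} try to make the stable and unstable manifolds of $f$ coincide. Instead it chooses $C^\infty$ circles $\tilde C_i$ which are $C^r$-close to the original bilink circles $C_i$ and bound the same area, builds a $C^\infty_\omega$ tubular neighborhood $N$ of $\sqcup_i\tilde C_i$ in which $f(\sqcup_i\tilde C_i)$ is the graph of a $C^r$-small $C^\infty$ section $\sigma$, and then post-composes $f$ with the (symplectically extended) fiberwise shift $(x,h)\mapsto(x,h-\sigma(x))$. The resulting $\hat f_\infty\in{\rm Diff}^\infty_\omega$ satisfies $\hat f_\infty(\sqcup_i\tilde C_i)=\sqcup_i\tilde C_i$ exactly. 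Because the hyperbolic continuations $\tilde P_i,\tilde Q_i$ are locally maximal, they must lie on the invariant circles $\tilde C_i$, and since the only $\hat f_\infty$-invariant curves through a saddle are its local stable and unstable manifolds, each $\tilde C_i$ is automatically a heteroclinic bilink. The point is that ``make a prescribed $C^\infty$ circle invariant'' is a single $C^\infty$ equation solved by one explicit shift, with no loss of regularity and no implicit function theorem---whereas ``make $W^u$ and $W^s$ coincide'' is the infinite-codimension condition that forces you into Nash--Moser territory.
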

The map $\hat f$ with the stochastic island lies in the $\varepsilon$-neighborhood of the original map $f$ in ${\rm Diff}^r_\omega(\M)$.
Since the stochastic island of the map $\hat f$ is robust relative link preservation, the map $\hat f_\infty$ also has a stochastic island and, hence,
positive metric entropy.

This shows, that arbitrarily close, in $C^r$ for any given $r$, to the original map $f$ there exists a map $\hat f_\infty\in {\rm Diff}^\infty_\omega(\M)$
with positive metric entropy.
$\qed$

\section{Stochastic island}\label{section1}
In this Section we describe a particular example of a stochastic island. It is somewhat similar to the Przytycki's development of the Katok's construction, in the sense that the holes in the island are bounded by heteroclinic links. A difference with the Przytycki's example is that
the heteroclinic links form smooth circles in our construction. Similar examples were considered by Aubin-Pujals \cite{AP09} in the
non-conservative case.

\subsection{An Anosov map of the torus}\label{Example}
Let $\T^2$ be the torus $\R^2/\Z^2$. Let $\S^1$ be the circle $\R/(2\pi \Z)$. 
We endow $\R^2$ and $\T^2$ with the symplectic form $\omega= dx\wedge dy$. 

Consider the following linear Anosov diffeomorphism of $\T^2$:
\begin{equation}\label{fancart}
F_A: (x,y)\mapsto A (x,y) := (13x+8y,8x+5y)
\end{equation}
($F_A$ is the third iteration of the standard Anosov example $(x,y)\mapsto (2x+y,x+y)$). The map $F_A$
preserves the area form $\omega$ and is uniformly hyperbolic, e.g. its Lyapunov exponents are non-zero.

The map $F_A$  has four different fixed points $\Omega_0=(0,0)$, $\Omega_1=(\frac12,\frac12)$, $\Omega_2=(\frac12,-\frac12)$, $\Omega_3=(-\frac12,\frac12)$. Let $\sigma>0$ be the logarithm of the unstable eigenvalue of the matrix $A$: 
$\sigma=\ln (9+4\sqrt{5})$. We may put the origin of coordinates to one of the points $\Omega_i$ and make a symplectic linear transformation of the
coordinates $(x,y)$ such that the map $F_A$ near $\Omega_i$ will become
\[\Omega_i+(x,y)\mapsto \Omega_i+(e^{\sigma} x,\; e^{-\sigma} y)\; .\]
Observe that $F_A$ is the time-$\sigma$ map by the flow of the system 
\[\dot x = \partial_y H_i(x,y), \qquad \dot y = -\partial_ x H_i(x,y)\]
associated with the Hamiltonian
\[ H_i = x y \; .\]

Note that the transition to polar coordinates $(\rho,\theta)$ near $\Omega_i$ by the rule
\begin{equation}\label{symppol}
(x,y) = (\sqrt{2 \rho} \cos(\theta), \sqrt {2 \rho} \sin(\theta) )
\end{equation}
preserves the symplectic form, i.e., we have $\omega=d\rho\wedge d\theta$. The map $F_A$
in these coordinates is the time-$\sigma$ map by the flow defined by the Hamiltonian function
\begin{equation}\label{hamri}
H_i(\theta, \rho) = \rho \sin(2\theta)\;.
\end{equation}
The corresponding Hamiltonian vector field is
\[\frac{d}{dt}\rho= 2\rho \cos(2\theta), \qquad \frac{d}{dt}\theta = - \sin(2\theta).\]
We do not need an explicit expression for the map $F_A$ in the symplectic polar coordinates; just note that
near the point $\Omega_i$ this map has the form $(\rho,\theta)\mapsto(\bar \rho, \bar\theta)$ where
\begin{equation}\label{fapol}
\bar\rho=\rho\; p_0(\theta), \qquad \bar\theta=q_0(\theta),
\end{equation}
and $p_0$ and $q_0$ are $C^\infty$-functions $\S^1\to\R$. Since this map preserves the symplectic form 
$d\rho\wedge d\theta$, its Jacobian $p_0(\theta) \partial q_0(\theta)$ equals to $1$, so $p_0(\theta)\neq 0$ and
$\partial q_0(\theta)\neq 0$.

\subsection{Stochastic island in $\T^2$}
In order to construct a chaotic island, we shall ``blow up'' the four points $\Omega_i$. This means that we will 
take some small $\delta>0$, consider the closed $\delta$-discs $V_i$ with the center at $\Omega_i$ for each $i=0,1,2,3$,
and build a $C^\infty$-diffeomorphism $\Psi$ of $\T^2\setminus \{\sqcup_{0\leq i\leq 3} V_i\}$
onto $\T^2\setminus \{\cup_i \Omega_i\}$. We will do it in such a way that the map $\Psi^{-1}\circ F_A\circ \Psi$ will be smoothly extendable 
to
 a $C^\infty$-diffeomorphism 
$\hat F$  
 of $\T^2$. Then the invariant set 
$\hat I=\T^2\setminus \{\sqcup_i V_i\}$ will be a stochastic island for $\hat F$. Moreover, the points $\Omega_i$ will be flat fixed points of
$\hat F$ and, importantly, $\hat F$ will inherit the symmetry with respect to $(-id)$ from the map $F_A$ -- this will be used at the next step in Section \ref{t2s}.

Let $\varepsilon>0$ be such that the map $F_A$ is the time-$\sigma$ map of the Hamiltonian flow defined by (\ref{hamri}) in the closed $\varepsilon$-disc $V_i'$ about $\Omega_i$, $i=0,1,2,3$;
we assume that the discs $V_i'$ are mutually disjoint. Let $0<\delta<\varepsilon$ and let $V_i\Subset V_i'$ be the closed $\delta$-discs about $\Omega_i$, $i=0,1,2,3$.

Let $\displaystyle\psi: \left[\frac{\delta^2}2, \frac{\varepsilon^2}2\right] \to \left[0,\frac{\varepsilon^2}2\right]$ be a 
 $C^\infty$-diffeomorphism such that
\begin{equation}\label{smallpsi}
\psi(\rho)= \rho - \frac{\delta^2}2  \quad \text{if $\rho$ is close to }\frac{\delta^2}2\quad \text{and} \quad \psi(\rho)  =\rho  \quad \text{if $\rho$ is close to } \frac{\epsilon^2}2\; .
\end{equation} 
Let $\Psi_i\in C^\infty(\T^2\setminus int(V_i),\T^2)$ be equal to the identity outside $V_i'$ and let the restriction of $\Psi_i$
to the smaller disc $V_i$ be given by
 \begin{equation}\label{psilarge}\Psi_i|_{V_i}: (\theta, \rho) \mapsto (\theta, \psi(\rho))\end{equation}
in the polar coordinates (\ref{symppol}). 
The radius-$\delta$ circle $\partial V_i$ about $\Omega_i$ is sent by $\Psi_i$ to $\Omega_i$. Note that $\Psi_i$ is a diffeomorphism from $\T^2\setminus V_i$ onto $\T^2\setminus \Omega_i$ and $\Psi_i$ commutes with $(-id)$. Note also that in a neighborhood of $\partial V_i $ the map $\Psi_i$ preserves the form $\omega=d\rho\wedge d\theta$.

Define 
\begin{equation}\label{surgery}
\text{$\Psi=\Psi_i$ in $V_i'\setminus V_i$ for $i=0,1,2,3$ and $\Psi=id$ in $\T^2\setminus \{\sqcup_i V_i'\}$.}
\end{equation} 
This map is a $C^\infty$-diffeomorphism from $\hat I=\T^2\setminus \{\sqcup_i V_i\}$ onto $\T^2\setminus \{\cup_i\Omega_i\}$
and it commutes with $(-id)$.

Denote $\hat F=\Psi^{-1}\circ F_A\circ \Psi$. By construction, this is a $C^\infty$-diffeomorphism of 
$\hat I$, which commutes with $(-id)$.
In a small neighborhood of the circles $\partial V_i$ the map $\hat F$ preserves symplectic form $d\rho\wedge d\theta$ and, by 
(\ref{hamri}),(\ref{smallpsi}),(\ref{psilarge}), it coincides in this neighborhood with the time-$\sigma$ map of the flow defined by the symplectic form $d\rho\wedge d\theta$ and the Hamiltonian
\[\hat H_i=(\rho-\delta^2/2)\sin(2\theta)\;.\]
We can, therefore, smoothly extend $\hat F$ inside $V_i$ (i.e., to $\rho\leq \delta^2/2$) as the time-$\sigma$ map of the flow 
defined by the smoothly extended Hamiltonian $\hat H_i$:
\begin{equation}\label{hamxi}\hat H_i=(\rho-\delta^2/2)\sin(2\theta) \xi(\rho)\;,\end{equation}
where $\xi$ is a $C^\infty$-function, equal to zero at all $\rho$ close to zero and equal to $1$ at all $\rho\ge \delta^2/2$.

We summarize some relevant properties of the map $\hat F$ in the following
\begin{prop}\label{propfhat}
The map $\hat F$ is a $C^\infty$-diffeomorphism of $\T^2$ such that:
\begin{enumerate}
\item $\hat F|_{\T^2\setminus \{\sqcup_i V_i\}}$ is conjugate to $F_A|_{\T^2\setminus \{\cup_i \Omega_i\}}$ via 
a $C^\infty$-diffeomorphism $\Psi$;
\item the set $\hat I:=\T^2\setminus \{\sqcup_i V_i\}$ is invariant with respect to $\hat F$; 
\item $\hat F$ preserves a smooth symplectic form $\hat \omega$;
\item  $\hat F$ commutes with $(-id)$, and $\hat \omega$ is invariant with respect to $(-id)$;
\item $\hat F$ equals to the identity in a small neighborhood of the points $\Omega_i$;
\item each circle $\partial V_i$ is a heteroclinic $4$-link.
\end{enumerate}
\end{prop}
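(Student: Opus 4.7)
The plan is to verify the six properties in turn from the explicit construction, which consists of a conjugation by $\Psi$ on $\hat I$ glued to a Hamiltonian flow inside each $V_i$. Property (1) is tautological from the definition $\hat F=\Psi^{-1}\circ F_A\circ \Psi$ on $\hat I$. For property (2), I would first show that the circle $\partial V_i$ (i.e.\ $\rho=\delta^2/2$) is invariant under the time-$\sigma$ map of $\hat H_i$: since $\hat H_i$ vanishes identically on $\partial V_i$, this set is a level set of the Hamiltonian, hence invariant under its flow; invariance of the interior of $V_i$ follows because the boundary is an invariant topological circle of a two-dimensional flow on the disc.

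For property (3), I would define $\hat\omega:=\Psi^*\omega$ on $\hat I$ and $\hat\omega:=\omega$ on each $V_i$. The only non-trivial point is smooth matching along $\partial V_i$: by the choice of $\psi$ in \eqref{smallpsi}, $\psi'(\rho)\equiv 1$ near $\rho=\delta^2/2$ and near $\rho=\varepsilon^2/2$, so $\Psi_i$ preserves $d\rho\wedge d\theta$ in a collar of $\partial V_i$ and also equals the identity near $\partial V_i'$, giving $\Psi^*\omega=\omega$ on both boundary components of the annulus $V_i'\setminus V_i$. Hence $\hat\omega$ extends smoothly to a global symplectic form. Since $F_A$ preserves $\omega$ on $\hat I$, $\hat F=\Psi^{-1}\circ F_A\circ\Psi$ preserves $\hat\omega$ on $\hat I$; inside $V_i$, $\hat F$ preserves $\omega=\hat\omega$ by construction (it is a Hamiltonian time-map).

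Property (4) reduces to checking $(-id)$-equivariance at each step: $F_A$ is linear and thus commutes with $-id$; $\Psi_i$ is defined in polar coordinates by a formula depending only on $\rho$, and $(-id)$ reads as $(\theta,\rho)\mapsto(\theta+\pi,\rho)$, so $\Psi_i$ commutes with it, and the same holds for the identity on $\T^2\setminus\sqcup V_i'$; finally the Hamiltonian $\hat H_i=(\rho-\delta^2/2)\sin(2\theta)\xi(\rho)$ is invariant under $\theta\mapsto\theta+\pi$, so its flow commutes with $-id$. Invariance of $\hat\omega$ follows because $\omega$ is $(-id)$-invariant and the glueing maps are. Property (5) is immediate from the definition \eqref{hamxi}: $\xi(\rho)=0$ near $\rho=0$ makes $\hat H_i\equiv 0$ there, and the time-$\sigma$ map of the zero Hamiltonian is the identity.

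The only step that is not purely bookkeeping is property (6), and I expect this to be the main obstacle. On $\partial V_i$ the restricted flow becomes $\dot\theta=-\sin(2\theta)$, whose zeros are $\theta_k=k\pi/2$ ($k=0,1,2,3$); these are four fixed points of $\hat F|_{\partial V_i}$. I would linearize the planar Hamiltonian vector field of $\hat H_i$ at each $(\rho,\theta)=(\delta^2/2,\theta_k)$: using $\sin(2\theta_k)=0$ and $\xi(\delta^2/2)=1$, the Jacobian becomes the diagonal matrix with entries $2\cos(2\theta_k)$ and $-2\cos(2\theta_k)$, hence the fixed point is a hyperbolic saddle of the flow and of its time-$\sigma$ map. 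The four arcs of $\partial V_i$ between consecutive $\theta_k$ are orbits of the boundary flow, hence lie in $W^u$ of one saddle and $W^s$ of the next one; this exhibits $\partial V_i$ as a heteroclinic $4$-link in the sense of the definition given earlier. Assembling these verifications concludes the proof.
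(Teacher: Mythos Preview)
Your proof is correct and follows essentially the same approach as the paper's own argument: each of the six claims is verified directly from the construction, with the only computational content being the identification of the four saddles on $\partial V_i$ via the explicit Hamiltonian system $\dot\rho=2(\rho-\delta^2/2)\cos(2\theta)$, $\dot\theta=-\sin(2\theta)$. The one minor variation is in claim~(2): you argue invariance of $\partial V_i$ via the Hamiltonian (the flow is tangent to $\rho=\delta^2/2$ since $\dot\rho$ vanishes there), whereas the paper deduces invariance of each $V_i$ by continuity from claim~(1) and the fact that $\Omega_i$ is fixed by $F_A$; both are equally short and valid.
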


\begin{proof} Claim 1 is given just by construction of $\hat F$. Claim 2 follows from it by continuity of $\hat F$:
since $\Omega_i$ are fixed points of $F_A$, each disc $V_i$ is invariant with respect to $\hat F$. Claim 5 follows since $\hat H_i$ is constant near $\Omega_i$, so
the corresponding vector field is identically zero there.

Claim 3: By claim 1, the map $\hat F$ preserves the symplectic form $\hat \omega=\Psi^*\omega$ in $\T^2\setminus \{\sqcup_i V_i\}$. Since $\Psi^*\omega=d\rho\wedge d\theta=\omega$ near $\partial V_i$ (as it follows from (\ref{smallpsi}),(\ref{psilarge})),
we can smoothly extend $\hat \omega$ onto the whole torus by putting $\hat\omega =\omega$ in $\sqcup_i V_i$. 
Since $\hat F|_{V_i}$ is the time-$\sigma$ map by a Hamiltonian flow, the form $\hat\omega=\omega$ inside the discs $V_i$ is preserved by $\hat F$.

Claim 4 follows since both the original map $F_A$ and the conjugacy $\Psi$ commute with $(-id)$, the Hamiltonians $\hat H_i$ that defines $\hat F$ inside the discs $V_i$ (see (\ref{hamxi})) are invariant with respect to $(-id): \theta\mapsto\theta+\pi$,
and the symplectic form $\omega$ is invariant with respect to $(-id)$. 

In order to prove claim 6, notice that by (\ref{hamxi}) the map $\hat F$ near $\partial V_i: \{\rho=\delta^2/2\}$
is the time-$\sigma$ map of the system
\begin{equation}\label{sys6}
\frac{d}{dt} \rho= 2 (\rho-\delta^2/2)\cos(2\theta), \qquad \frac{d}{dt} \theta= -\sin(2\theta).
\end{equation}
This system has 4 saddle equilibria on the circle $\rho=\delta^2/2$: $\theta=0, \pi/2, \pi, 3\pi/2$. These equilibria are 
hyperbolic fixed points of $\hat F$, and the invariant arcs of the circle $\rho=\delta^2/2$ between these points are
formed by their stable or unstable manifolds.
\end{proof}

\subsection{The island is robust relative link preservation}

The following statement establishes that the set $\hat I$ is a stochastic island for $\hat F$. It also concerns the dynamics of perturbations of $\hat F$. Similar results were obtained by Aubin-Pujals in \cite{AP09} for the non-conservative case. The proof is given by a new and shorter argument.

\begin{theo}\label{claim7}
For the map $\hat F$, as well as for every, not necessarily conservative, diffeomorphism $\tilde F$ which is $C^2$-close to 
$\hat F$ and keeps the circles
$\partial V_i$ invariant ($i=0,1,2,3$), all points in $\hat I$ have positive maximal Lyapunov exponent. The maps 
$\tilde F$ and $\hat F$ are topologically conjugate on $\hat I$; all such maps are transitive.
\end{theo}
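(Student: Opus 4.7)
I would split $\hat I$ into its relative interior $\hat I^{\circ}:=\hat I\setminus\sqcup_i\partial V_i$ and the four boundary circles. On $\hat I^{\circ}$, the map $\Psi$ is a smooth diffeomorphism onto $\T^2\setminus\cup_i\Omega_i$ conjugating $\hat F$ to $F_A$, so the Lyapunov spectrum at every $x\in\hat I^{\circ}$ equals that of $F_A$ at $\Psi(x)$, namely $\pm\sigma$. On each $\partial V_i$, writing $u=\rho-\delta^2/2$, the map $\hat F$ becomes the skew product $(\theta,u)\mapsto(q(\theta),\,u\lambda(\theta))$ associated with the flow (\ref{sys6}), with $q'(\theta)\lambda(\theta)=1$ by area preservation. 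The four zeros of $\sin(2\theta)$ are hyperbolic saddles with eigenvalues $e^{\pm 2\sigma}$, and every other point of $\partial V_i$ has its $q$-orbit converging forward to an attracting zero and backward to a repelling one, so Cesaro averages of $\log|q'|$ and $\log|\lambda|$ along the orbit match the saddle values and yield maximal Lyapunov exponent $2\sigma$. Transitivity of $\hat F$ on $\hat I$ then follows from the transitivity of $F_A$ on $\T^2\setminus\cup_i\Omega_i$ and the density of $\hat I^{\circ}$ in $\hat I$.

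\textbf{Topological conjugacy with $C^2$-close perturbations.} For a not-necessarily-conservative $C^2$-close perturbation $\tilde F$ preserving each $\partial V_i$, I would assemble the conjugacy from two canonical pieces. Since $\tilde F$ preserves $\partial V_i$, it descends via $\Psi$ to a continuous self-map $\tilde G:=\Psi\circ\tilde F\circ\Psi^{-1}$ of $\T^2$ that fixes each $\Omega_i$, is smooth off $\cup_i\Omega_i$, and is $C^0$-close to $F_A$; topological stability of Anosov diffeomorphisms (expansiveness and shadowing for $F_A$) provides a continuous map $h_0\colon\T^2\to\T^2$ close to identity with $h_0\circ\tilde G=F_A\circ h_0$, and pulling $h_0$ back through $\Psi$ furnishes the conjugacy on $\hat I^{\circ}$. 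On each $\partial V_i$, the restriction $\tilde F|_{\partial V_i}$ is $C^2$-close to the Morse-Smale map $q$, so structural stability of Morse-Smale circle diffeomorphisms supplies a conjugacy $h_i\colon\partial V_i\to\partial V_i$ close to identity. The sixteen saddles of $\hat F$ on $\sqcup_i\partial V_i$ admit hyperbolic continuations for $\tilde F$ whose one-dimensional stable and unstable manifolds are $C^2$-close to those of $\hat F$ and provide a rigid skeleton along which the two pieces can be aligned. Once $h\colon\hat I\to\hat I$ is assembled, positive maximal Lyapunov exponent for $\tilde F$ at every point follows by repeating the first step in the persistent skew-product normal form on each collar and in the persistent hyperbolicity inherited from $F_A$ on the core.

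\textbf{Main obstacle.} The delicate step is the matching: $\Psi$ is singular on each $\partial V_i$, so the $\Psi$-pullback of $h_0$ does not automatically extend continuously to $\partial V_i$ to agree with $h_i$, and the global output of Anosov topological stability is only a continuous semi-conjugacy of $\tilde G$ with $F_A$, not a genuine conjugacy. I would resolve this by using the $32$ persistent invariant curves of the sixteen saddles to decompose a collar of each $\partial V_i$ into finitely many dynamically coherent sectors, extending $h_i$ along the transversal families of these invariant manifolds in each sector, and then verifying consistency with $\Psi^{-1}\circ h_0\circ\Psi$ on the overlap with the core. This combinatorial verification, controlled by the persistence of the sixteen saddles and the robustness of their stable and unstable arcs along $\partial V_i$, is where I expect the authors' ``new and short argument'' to concentrate.
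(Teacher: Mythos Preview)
Your treatment of $\hat F$ itself is fine, but for the perturbation $\tilde F$ you miss the paper's central observation and end up with a genuine gap. The paper's ``new and short argument'' is Lemma~\ref{closemap}: if $\tilde F$ is $C^2$-close to $\hat F$ and preserves each $\partial V_i$, then the pushed-forward map $\tilde F_A:=\Psi\circ\tilde F\circ\Psi^{-1}$ on $\T^2\setminus\cup_i\{\Omega_i\}$ is \emph{uniformly $C^1$-close} to the linear Anosov map $F_A$, all the way up to the singular points $\Omega_i$. This is a concrete computation in symplectic polar coordinates using the specific form $\bar\rho=\rho\,p_0(\theta)$, $\bar\theta=q_0(\theta)$ of $F_A$ near $\Omega_i$ and the estimates $\|\partial_{(x,y)}\rho\|\leq\sqrt{2\rho}$, $\|\partial_{(x,y)}\theta\|\leq 1/\sqrt{2\rho}$. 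You only claim $C^0$-closeness of your $\tilde G$ to $F_A$, and this is precisely why you are forced into the matching problem.

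With the $C^1$-closeness in hand, both parts of the theorem follow without any gluing. For Lyapunov exponents, $D\tilde F_A$ is uniformly close to the matrix $A$, hence preserves the positive cone and expands by a factor $\geq 4$; one gets $\|D\tilde F_A^n\|\geq 4^n$ everywhere on $I_A$, and then transfers this back through $\Psi$ by a dichotomy (either the $\tilde F$-orbit has a subsequence bounded away from $\sqcup_i\partial V_i$, so $\|D\Psi^{\pm1}\|$ stays bounded along it, or it converges to a saddle on some $\partial V_i$ and inherits that saddle's positive exponent). Your proposal to use ``persistent hyperbolicity on the core'' and ``skew-product normal form on the collar'' does not address orbits that spend arbitrarily long near the boundary without converging to it; the cone argument handles this uniformly. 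For the conjugacy, the $C^1$-closeness makes the Moser/Anosov implicit-function argument go through on the Banach space of continuous vector fields on $\T^2$ vanishing at each $\Omega_i$, yielding a homeomorphism $h$ of $\T^2$ with $h\circ F_A=\tilde F_A\circ h$ and $h(\Omega_i)=\Omega_i$; injectivity is checked directly from the hyperbolicity of $A$. Pulling $h$ back through $\Psi$ gives the conjugacy on $\hat I$ in one stroke, so the sector-by-sector matching you outline is never needed.
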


\begin{proof}
By (\ref{fapol}),(\ref{smallpsi}),(\ref{psilarge}), the map $\hat F$ near $\partial V_i$ can be written as 
$(\rho,\theta)\mapsto (\bar\rho,\bar\theta)$ where
$$\bar\rho=\frac{\delta^2}{2} +(\rho-\frac{\delta^2}{2}) p_0(\theta), \qquad \bar\theta=q_0(\theta)\;.$$
A $C^2$-small perturbation $\tilde F$ of $\hat F$ which keeps the circle $\partial V_i$ invariant must send $\rho=\frac{\delta^2}{2}$ to $\bar \rho=\frac{\delta^2}{2}$, so it has the form
$$\bar\rho=\frac{\delta^2}{2} +(\rho-\frac{\delta^2}{2}) (p_0(\theta)+p(\theta,\rho)), \qquad 
\bar\theta=q_0(\theta)+q(\theta,\rho)\;,$$
where the function $p$ is $C^1$-small and $q$ is $C^2$-small.

By reversing our surgery (\ref{surgery}), we obtain a diffeomorphism $\tilde F_A=\Psi^{-1}\circ \tilde F\circ\Psi$
of $I_A:=\T^2\setminus \cup_i \{ \Omega_i\}$, which takes the following form near $\Omega_i$ (see (\ref{smallpsi}),(\ref{psilarge})):
$$\bar\rho=\rho \cdot (p_0(\theta)+p(\theta,\rho)), \qquad \bar\theta=q_0(\theta)+q(\theta,\rho)\;.$$

The following simple and new Lemma enables us to compare $\tilde F_A$ with $F_A$ defined in (\ref{fancart}).
\begin{lemm}\label{closemap}
In the Cartesian coordinates, the restrictions 
$\tilde F_A$ and $F_A$ to $ I_A=\T^2\setminus \cup_i \{\Omega_i\}$ are uniformly $C^1$-close.
\end{lemm}
%
\begin{proof}
The transformation
$(\rho,\theta)\mapsto (x,y)=(\sqrt{2\rho}\cos\theta, \sqrt{2\rho}\sin\theta)$ to Cartesian coordinates near
$\Omega_i$ has the following property
\begin{equation}\label{derpolxy}
\|\partial_{(x,y)}\rho\|\leq \sqrt{2\rho}, \qquad \|\partial_{(x,y)}\theta\|\leq \frac{1}{\sqrt{2\rho}}.
\end{equation}
Thus, the map $\tilde F_A$
near $\Omega_i$ takes the form $(x,y)\mapsto (\bar x, \bar y)$ where
$$\bar x= \sqrt{2\rho} \sqrt{p_0+p} \cos(q_0+q), \qquad\bar y= \sqrt{2\rho} \sqrt{p_0+p} \sin(q_0+q).$$
The uniformly-hyperbolic map $F_A$ is given by
$$\bar x= \sqrt{2\rho} \sqrt{p_0} \cos(q_0), \qquad\bar y= \sqrt{2\rho} \sqrt{p_0} \sin(q_0)$$
(see (\ref{fapol})). Recall that $p_0(\theta)\neq 0$ for all $\theta$. 

It follows that, $\tilde F_A(x,y)=F_A(x,y)+\nu \sqrt{2\rho}\;\phi(\rho,\theta)$ near $\Omega_i$ where
$\nu=\|(p,q)\|_{C^1}\sim \|\tilde F-\hat F\|_{C^2}$ is small and
$\phi$ is uniformly bounded along with its first derivatives with respect to $\rho$ and $\theta$. 
By (\ref{derpolxy}), this gives us that near the points $\Omega_i$
$$\|\partial_{(x,y)} (\tilde F_A-F_A)\|=\nu \left\|\frac{\partial_{(x,y)} \rho}{\sqrt{2\rho}}\;\phi+
\sqrt{2\rho}\; \partial_{(\rho,\theta)} \phi \;\partial_{(x,y)}(\rho,\theta)\right\|=O(\nu).$$
\end{proof}
This lemma implies the following:
\begin{lemm}
For the map $\tilde F$, every $z\in \hat I$  displays a positive Lyapunov exponent.
\end{lemm}
\begin{proof}
We have found that the map $\tilde F_A :I_A \to I_A$ is uniformly close in $C^1$ to the uniformly-hyperbolic map $F_A$ in a neighborhood of the fixed points $\Omega_i$ (even though the derivative of $\tilde F_A$ may be not defined at the points $\Omega_i$). Since the 4 fixed points $\Omega_i$ are the only singularities of the surgery transformation $\Psi$, the derivative of 
$\tilde F_A$ is uniformly close to the derivative of $F_A$ everywhere on $I_A$, i.e., $D\tilde F_A$ is uniformly close
to $(x,y)\mapsto A (x,y) = \left(\begin{array}{cc} 13 & 8 \\ 8 & 5\end{array}\right) (x,y)$ (see (\ref{fancart})). In particular, $D\tilde F_A$ takes every vector with positive coordinates to a vector with positive coordinates and at least 4 times larger norm. Hence, 
\begin{equation}\label{estfal}
\|D\tilde F_A^n(P)\| \geq 4^n
\end{equation}
for every point $P\in I_A$.
 
The map $\tilde F: \hat I\to \hat I$ is smoothly conjugate to $\tilde F_A=\Psi^{-1}\circ \tilde F\circ\Psi$, so
\[\|D\tilde F^n(\Psi(P))\|\geq \|D\tilde F_A^n(P)\|/(\|D\Psi^{-1}(\tilde F_A^n(P)\| \cdot \|D\Psi(P)\|)\]
for every $P\in I_A$. If the point $P$ is chosen such that the iterations $\tilde F^n(\Psi(P))$ do not converge to $\cup_i \partial V_i$,
then there is a sequence $n_j\to+\infty$ such that the iterations $\tilde F_A^{n_j}(P)$ stay away
from the points $\Omega_i$ - the only singularities of the conjugacy map $\Psi$. Thus, both $\|D\Psi^{-1}(\tilde F_A^{n_j}(P)\|$
and $\|D\Psi(P)\|$ are bounded away from zero in this case. It follows from (\ref{estfal}) that
$$\limsup_{n_j\to \infty} \frac1{n_j} \log \|D\tilde F^{n_j}(\Psi(P))\|\geq \limsup_{n_j\to \infty} \frac1{n_j} \log \|D\tilde F_A^{n_j}(P)\|\geq \ln 4\; .$$
By definition, this means that the maximal Lyapunov exponent of $\Psi(P)$ is $\ge \ln 4$, for the map $\tilde F$.

In the remaining case, if all iterations of the point $\Psi(P)$ by $\tilde F$ converge to $\cup_i \partial V_i$, they must converge
to one of the saddle fixed points that lie in $\cup_i \partial V_i$ (since every $\hat F$-pseudo-orbit which remains close to $\partial V_i$, is necessarily eventually close to one of the saddle point of $V_i$). 
 In this case, the maximal Lyapunov exponent
of $\Psi(P)$ equals to the maximal Lyapunov exponent of the saddle point, i.e., it is positive. Thus, in any case, every point of
$\hat I$ has positive maximal Lyapunov exponent for the map $\tilde F$.
\end{proof}
By the uniform hyperbolicity of $F_A$ (see Lemma \ref{closemap}), using a variation of the Moser's technique \cite{Mo69} of the proof of  Anosov structural stability theorem \cite{An67}, we prove:
\begin{lemm}\label{structstab}
There exists a homeomorphism $h$ of $\T^2$ which conjugates $F_A$ and $\tilde F_A$, and leaves each $\Omega_i$ invariant.   
\end{lemm}
This lemma implies the  topological conjugacy between $\tilde F|_{\hat I}$ and $\hat F|_{\hat I}$. The transitivity of $\tilde F|_{\hat I}$ follows from the transitivity of $F_A|_{\T^2\setminus \{\cup_i \Omega_i\}}$ by the conjugacy. This completes the proof of Theorem \ref{claim7}. 
\end{proof}
\begin{proof}[Proof of Lemma \ref{structstab}]

%
%
%

The map $F_A$ induces an automorphism on the Banach space $\Gamma$ of bounded continuous vector fields $\gamma$
vanishing at $\Omega_i$, $i=0,1,2,3$:
\[F_A^\sharp : \gamma \mapsto DF_A \circ \gamma \circ F_A^{-1}\;.\]
The hyperbolicity of $F_A$ (it uniformly expands in the unstable direction and uniformly contracts in the stable direction) implies that 
the linear operator $id-F_A^\sharp$ has a bounded inverse\footnote{As $F_A$ is a linear map, it is easy to provide an explicit formula for $id-F_A^\sharp$: if $(id-F_A^\sharp)\gamma=\beta$, then
$\gamma=\sum_{n=0}^\infty e^{-n\sigma} \beta_s\circ F_A^{-n} -\sum_{n=1}^\infty e^{-n\sigma} \beta_u\circ F_A^n$,
where $\beta_s$ and $\beta_u$ are the projections of $\beta$ to the stable and, respectively, unstable directions of $F_A$, and
$e^{\pm\sigma}$ are the eigenvalues of $F_A$, $\sigma>0$.}.
By the implicit function theorem, this implies that the fixed point $\gamma=0$ of $F_A^\sharp$ is unique, and every (nonlinear) operator on $\Gamma$ which is $C^1$-close to $F_A^\sharp$ has a unique fixed point close to
$\gamma=0$. In particular, the operator 
\[\gamma \mapsto \tilde F_A \circ ( id + \gamma)\circ  F_A^{-1} - id\;,\]
has a unique fixed point $\gamma$. By the construction, the map $h=id+\gamma$ satisfies:
\[h \circ F_A=\tilde F_A \circ h\; .\]
Thus, $\tilde F_A$ and $F_A$ are semi-conjugate, and we have that
\begin{equation}\label{h12c}
h \circ F_A^n=\tilde F_A^n \circ h
\end{equation}
for every integer $n$, positive and negative. 

In order to prove the topological conjugacy between $F_A$ and $\tilde F_A$, it remains to show that the continuous map
$h$ is injective. This is done as follows: if $h(P)=h(Q)$, then $h(F_A^n P)=h(F_A^n Q)$ for all $n\in\Z$, by (\ref{h12c}).
Since $h$ is uniformly close to identity, it follows that $F_A^n P$ is uniformly close to $F_A^n Q$, i.e., $A^n(P-Q)$ is uniformly
small for all $n\in\Z$. By the hyperbolicity of the matrix $A$, this gives $P=Q$, as required.
\end{proof} 

\subsection{Stochastic island in the disc}\label{t2s} 

It is easy to see that the result of the factorization $\pi$ of the 4-punctured torus $\T^2\setminus \cup_{i=0}^3\{\Omega_i\}$ over $(-id): (x,y)\mapsto (-x,-y)$ is a 4-punctured sphere. One can realize the smooth map 
$\pi:\T^2\setminus \cup_i\{ \Omega_i\}\to \S^2$ e.g. by a Weierstrass elliptic function\footnote{or, if we realize the 4-punctured sphere
as the surface $\{Z^2+\eta(X,Y)=1, \; |X|\leq 1,\; |Y|\leq 1\}$ in $\R^3$, where 
$\eta=(X^2+Y^2+\sqrt{1-X^2-Y^2+X^4+Y^4-X^2Y^2})/2$, then $\pi$ can be explicitly defined as
$\pi(x,y)=(X=2|x|-1,Y=2|y|-1, Z={\rm sign}(xy) \sqrt{1-\eta(X,Y)}), \quad |x|\leq 1/2,\; |y|\leq 1/2$.}; see also \cite{Ka79}.

Each fiber of $\pi$ is a pair of points $(x,y)$ and $-(x,y)$. Since $\hat F$ commutes with $(-id)$ in our construction, and 
$\hat F$ is the identity map in a neighborhood of each of the points $\Omega_i$ ($i=0,1,2,3$) where $\pi$ is singular,
the push-forward $\check F=\pi\circ \hat F\circ \pi^{-1}$ of $\hat F$ is a well-defined $C^\infty$-diffeomorphism of the sphere $\mathbb S^2$.
As the symplectic form $\hat \omega$, which is preserved by $\hat F$, is invariant with respect to $(-id)$, it follows that
the push-forward of $\hat\omega$ by $\pi$ is a smooth symplectic form $\check \omega$ on $\S^2\setminus \cup_i \{\Omega_i\}$, and $\check \omega$ is invariant by $\check F$. Note that 
the form $\check \omega$ can get singular at the points $\pi \Omega_i$, but we can smoothen
$\check \omega$ in an arbitrary way near these points; since $\check F$ is the identity map there, it preserves any smooth
area form near $\pi\Omega_i$. Hence $\check F$ leaves invariant a smooth sympletic form $\check \omega$. 

By construction, the set $\check I=\pi \hat I$ is the stochastic island for the map $\check F$ (the island $\hat I$ is at a bounded
distance from the singularities $\Omega_i$, so $\pi^{-1}$ realizes a smooth conjugacy between $\check F|_{\check I}$
and $\hat F|_{\hat I}$, which takes heteroclinic links to heteroclinic links and keeps the maximal Lyapunov exponent positive).
Note that the 4-links $\partial V_i$ are invariant with respect to $(-id)$. The map $\pi$ glues the opposite points of 
$\partial V_i$ together, hence the circles $\pi(\partial V_i)$ that bound the island $\check I$ are heteroclinic bi-links. 

Now we will transform the stochastic island $\check I$ on $\S^2$ to a stochastic island for a map of the plane.
Let us identify $\mathbb S^2$ with the one-point compactification of $\R^2$, where $\pi\Omega_0$ is identified with $\infty$;
this can be done e.g. by the stereographic projection $\pi_0: (\S^2\setminus \pi\Omega_0) \to\R^2$.
As $\check F$ is equal to the identity at a neighborhood of $\Omega_0$, after the projection to $\R^2$, the map $\check F$ will be
a $C^\infty$-diffeomorphism and will be equal to the identity at a neighborhood of infinity. The form $\check \omega$ will become a symplectic form on $\R^2$ and it will be preserved by $\check  F$. Let $\check\omega = \beta(x,y) dx\wedge dy$ for some smooth function $\beta\neq 0$. The diffeomorphism $\pi_1: (x,y)\mapsto (X=x, Y=\int_0^y \beta(x, s) ds)$ of $\R^2$ 
onto a domain $D\subset \R^2$ transforms $\check \omega$ to the standard symplectic form $dX\wedge dY$. The map 
$\check F$ takes $D$ to $D$ in the coordinates $(X,Y)$ and is equal to identity near the boundary $\partial D$, so it can be extended
onto the whole of $\R^2$ as the identity map outside of $D$; it will still preserve the standard form $dX\wedge dY$. By performing an
additional scaling $\pi_2: (X,Y)\mapsto (\kappa X,\kappa Y)$ 
we can achieve that $\check F=id$ everywhere outside the unit disc $\D$.
Thus the image by $\pi_2\circ\pi_1\circ\pi_0$ of the stochastic island $\check I$ on $\S^2$ will lie inside $\D$; it is a stochastic island $\mathcal I$ for the map $\check F$ (because $\check I$ is separated from $\pi\Omega_0$, so the map $\pi_2\circ\pi_1\circ\pi_0$
is a smooth conjugacy).

We have shown the existence of a diffeomorphism $\check F\in {\rm Diff}^\infty_{dX\wedge dY}(\D)$
with a stochastic island $\mathcal I$. Note that Proposition \ref{KPAP} from Section \ref{section0} is satisfied 
for this island, as easily follows from Theorem \ref{claim7}:
\begin{coro}\label{remf2}  
The stochastic island $\mathcal I$ for the map $\check F \in {\rm Diff}^\infty (\D)$ is robust relative link preservation. 
\end{coro}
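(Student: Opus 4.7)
The plan is to derive Corollary \ref{remf2} as an essentially immediate consequence of Proposition \ref{KPAP}, unpacking the definition of ``robust relative link preservation'' to match the hypothesis of that proposition.

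First, I would recall precisely what is required: given any $r \ge 2$ and any $C^r$-small perturbation $F$ of $\check F$, if the stable and unstable manifolds forming the four heteroclinic bi-links $\{\check L_i^a\cup \check L_i^b\}_{i=0}^3$ that bound $\mathcal I$ do not split under the perturbation, then the perturbed stable/unstable manifolds still bound a stochastic island for $F$. Since $\check F$ has finitely many hyperbolic fixed points $\check P_i, \check Q_i$ on the bi-links (inherited via the chain $\pi_2\circ\pi_1\circ\pi_0\circ\pi$ from the four saddles of $\hat F$ on each $\partial V_i$ coming from the equilibria of system \eqref{sys6}), the hyperbolic continuations $P_i, Q_i$ for $F$ are uniquely defined and depend $C^r$-continuously on $F$.

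Next I would verify that ``the link does not split'' implies precisely the hypothesis of Proposition \ref{KPAP}. The non-splitting assumption means that for each $i$, the branches of $W^u(P_i;F)$ and $W^s(Q_i;F)$ that used to form the circle $\check L_i^a\cup \check L_i^b$ still meet to form an embedded topological circle $L_i^a\cup L_i^b$. By the $C^r$-dependence of compact pieces of invariant manifolds of hyperbolic periodic points on the diffeomorphism (a standard graph-transform/$\lambda$-lemma argument applied to fundamental domains), these bi-links are automatically $C^r$-close, and in particular $C^2$-close, to $\check L_i^a\cup \check L_i^b$ as soon as $F$ is sufficiently $C^r$-close to $\check F$. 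Thus the quadruple $\{L_i^a\cup L_i^b\}_{i=0}^3$ is $C^2$-close to the original quadruple of bi-links bounding $\mathcal I$.

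Finally I would invoke Proposition \ref{KPAP}: since $F$ is conservative (being a perturbation inside ${\rm Diff}^r_\omega(\D)$) and $C^2$-close to $\check F$, and since $\{W^u(P_i;F)\cup W^s(Q_i;F)\}$ define the four heteroclinic bi-links $\{L_i^a\cup L_i^b\}$ which are $C^2$-close to the original ones, the proposition asserts that they bound a stochastic island. In particular, every point in this island has positive maximal Lyapunov exponent, so the metric entropy of $F$ is positive. This is exactly the property required to conclude that $\mathcal I$ is robust relative link preservation.

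There is no real obstacle here beyond matching definitions: the nontrivial content (control of Lyapunov exponents under $C^2$-perturbations keeping the links invariant) is entirely carried by Theorem \ref{claim7}, via the conjugacy $\Psi$ pulling back to the Anosov map $F_A$ on $\T^2\setminus\cup_i\{\Omega_i\}$, followed by the projection $\pi$ to $\S^2$ and the charts $\pi_0,\pi_1,\pi_2$ to $\D$. The only point requiring a line of verification is the $C^2$-closeness of persisting bi-links, and this is standard hyperbolic theory.
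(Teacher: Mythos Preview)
Your proposal has a circularity problem. In the paper's logical structure, Proposition~\ref{KPAP} is not an independent result you may cite: right after its statement the paper says ``The proof is given by Corollary~\ref{remf2} of Theorem~\ref{claim7}''. So Corollary~\ref{remf2} \emph{is} the proof of Proposition~\ref{KPAP}, and your step ``Finally I would invoke Proposition~\ref{KPAP}'' is invoking the very statement you are asked to prove.

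Your final paragraph correctly identifies Theorem~\ref{claim7} and the chain of conjugacies $\pi_2\circ\pi_1\circ\pi_0\circ\pi$ as the real content, but you skip the one nontrivial step the paper actually performs. Theorem~\ref{claim7} applies only to maps $\tilde F$ that keep the \emph{original} circles $\partial V_i$ invariant. A perturbation $F$ of $\check F$ that does not split the links will have bi-links $L_i^a\cup L_i^b$ which are $C^2$-close to, but not equal to, the original $\mathcal L_i$; lifting $F$ naively to the torus gives a map preserving perturbed circles, not $\partial V_i$. The paper's proof first straightens the picture: since the new bi-links are $C^2$-close circles, there is a $C^2$-diffeomorphism of $\D$, $C^2$-close to the identity, carrying them onto the original $\mathcal L_i$; conjugating $F$ by this diffeomorphism yields a map $C^2$-close to $\check F$ that fixes the $\mathcal L_i$ themselves. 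Only then does the lift to $\T^2$ via $\pi^{-1}\circ\pi_0^{-1}\circ\pi_1^{-1}\circ\pi_2^{-1}$ produce a $\tilde F$ satisfying the hypotheses of Theorem~\ref{claim7}, and the conclusion (positive maximal Lyapunov exponent everywhere on the island) transfers back since smooth conjugacy preserves Lyapunov exponents. This straightening conjugacy is the missing ingredient in your outline.
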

\begin{proof} The island $\mathcal I$ is bounded by four heteroclinic bi-links
${\mathcal L}_i=\pi_2\circ\pi_1\circ\pi_0\circ \pi (\partial V_i)$, $i=0,1,2,3$. For every $F$ which is 
$C^2$-close to $\check F$, if $F$ does not split the links, then it is $C^2$-conjugate to a $C^2$-diffeomorphism of $\D$ which
is $C^2$-close to $\check F$ and keeps the links ${\mathcal L}_i$ invariant. Lifting this diffeomorphism to the torus $\T_2$ by $\pi^{-1}\circ\pi_0^{-1}\circ\pi_1^{-1}\circ\pi_2^{-1}$, we obtain a diffeomorphism $\tilde F$ of 
$\T_2\setminus\{\cup_{i=0,1,2,3} \Omega_i\}$ which preserves the links $\partial V_i$ and is $C^2$-close to $\hat F$ on $\hat I$.
By Proposition \ref{claim7}, the map $\tilde F$ has positive maximal Lyapunov exponent at every point of $\hat I$.
Since the smooth conjugacy does not change the Lyapunov exponent, the map $F$ has positive maximal Lyapunov exponent
a every point of the island bounded by the continuations of the links $\mathcal L_i$.
\end{proof}

\section{Geometric model for a suitable stochastic island}\label{section2}

Now, we choose a particular coordinate system in $\R^2$ such that the map $\check F$ and its stochastic island $\mathcal I$
we just constructed will acquire certain suitability properties (as given by Definition \ref{suitabledef}; see Fig. \ref{3hetero}).

Let $F\in {\rm Diff}^\infty_\omega (\R^2)$ have a heteroclinic link $L$. 

\begin{defi}
A \emph{fundamental interval} of $L$ for the map $F$ is a closed segment $D_1\subset L$ such that $F(D_1)\cap D_1$ is exactly
one point -- an endpoint both to $D_1$ and $F(D_1)$.
Given $m\ge 1$, an \emph{$m$-fundamental interval} $D_m$ of $L$ is the union $D_m= \cup_{i=0}^{m-1} F^i (D_1)$ of the $m$ first 
iterates of a certain fundamental interval $D_1$.
\end{defi}

Let $(x,y)$ be symplectic coordinates in $\R^2$, so $\omega =dx\wedge dy$. Below we always fix the orientation in $\R^2$
such that the $x$-axis looks to the right and the $y$-axis looks up.
\begin{defi}\label{strd}
An $m$-fundamental interval $D_m$ of a heteroclinic link $L$ will be called {\em straight} if $D_m$ is included in a straight line $y=const$.
\end{defi}

It is a well-known fact (see \cite{GL01}) that any $m$-fundamental interval $D_m$ can be straightened and the so-called
time-energy coordinates can be introduced in its neighborhood, i.e., the map near $D_m$ becomes a translation to a constant vector.
We formulate this result as

\begin{lemm}\label{h-lem}
If $L$ is a link between
two hyperbolic fixed points $P$ and $Q$ for $F\in {\rm Diff}^\infty_\omega(\R^2)$ and $D_m\in L$ is
an $m$-fundamental interval, then 
there exists a symplectic $C^\infty$-diffeomorphism $\phi$ from a neighborhood of $D_m$ into $\R^2$ such that 
$\phi(D_m)$ is a straight $m$-fundamental interval for $\phi\circ F\circ \phi^{-1}$ and, in a neighborhood of $\phi(D_m)$,
\[\phi\circ F\circ \phi^{-1} = : (x,y)\mapsto (x+\tau, y)\]
for some constant $\tau\neq 0$.
\end{lemm}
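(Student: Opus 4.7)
My plan is to construct $\phi$ as the composition of three successive symplectic coordinate changes: a straightening of $D_m$, a reparametrisation along it so that $F$ acts as a rigid translation on the resulting segment, and finally a Hamiltonian normal form to kill the remaining transverse nonlinear corrections.

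For the first step I observe that $D_m$ is a smoothly embedded arc: it must be disjoint from $P$ and $Q$, since any fixed point lying in $D_1$ would force $D_1\subset F(D_1)$, contradicting the fundamental-interval condition. A standard Darboux-type argument in dimension two then produces a symplectic diffeomorphism $\phi_1$ from a neighborhood of $D_m$ onto a neighborhood of a segment of the $x$-axis, sending $D_m$ to that segment; concretely, I would choose a smooth function $\tilde y$ vanishing along a smooth extension of $D_m$ inside $L$ with $d\tilde y\neq 0$ there, and a complementary $\tilde x$ satisfying $\{\tilde x,\tilde y\}=1$ along the arc, so that $(\tilde x,\tilde y)$ become symplectic coordinates.

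For the second step, since $F$ preserves $L$, the conjugate $F_1:=\phi_1\circ F\circ\phi_1^{-1}$ preserves the $x$-axis, and its restriction $g$ there is a smooth diffeomorphism without fixed points. Standard one-dimensional theory provides an $h$ conjugating $g$ to $x\mapsto x+\tau$ for some $\tau\neq 0$; lifting $h$ to the plane through the symplectic map $\phi_2(x,y):=(h^{-1}(x),\,h'(h^{-1}(x))\,y)$, the conjugate $F_2:=\phi_2\circ F_1\circ\phi_2^{-1}$ preserves the axis and restricts to $(x,0)\mapsto(x+\tau,0)$. For the third step, I would construct a smooth time-independent Hamiltonian $H$ on a neighborhood of the $x$-axis with $H|_{y=0}=0$, $\partial_y H|_{y=0}=1$, and such that $F_2$ is the time-$\tau$ map of the Hamiltonian flow of $H$; applying the Hamiltonian flow-box theorem to $H$ (whose differential is non-vanishing on the axis) then yields final symplectic coordinates $(\bar x,\bar y)$ in which $H=\bar y$, so that the time-$\tau$ flow becomes the desired translation $(\bar x,\bar y)\mapsto(\bar x+\tau,\bar y)$. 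Composing the three coordinate changes gives the $\phi$ sought in the lemma.

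The main obstacle is the existence of the Hamiltonian suspension $H$ in this last step. Since $F_2$ already coincides with the translation on the axis, $H$ is forced to equal $y$ modulo an $O(y^2)$ correction, and I would determine this correction order by order in $y$: symplecticity of $F_2$ supplies at each order the integrability condition needed to solve for the next Taylor coefficient of $H$ along the axis. Equivalently, $F_2$ being symplectic and restricting to the translation on the axis admits a type-2 generating function $S(x,\bar y)=x\bar y+\tau\bar y+\bar y^{2}\sigma(x,\bar y)$ for some smooth $\sigma$, from which the generating Hamiltonian $H$ can be extracted by the usual Hamilton-Jacobi substitution.
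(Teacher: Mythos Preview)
Your first two steps are fine, but step 3 contains a genuine gap. After steps 1 and 2 you have a symplectic map $F_2$ that preserves the $x$-axis and restricts to the translation by $\tau$ there; you then \emph{assume} that $F_2$ is the time-$\tau$ map of some autonomous Hamiltonian $H$ in a full neighborhood of the axis, so that the flow-box theorem can be applied. But an arbitrary symplectic diffeomorphism is not the time-$\tau$ map of an autonomous Hamiltonian flow, and neither of your proposed constructions establishes this. Solving for the Taylor coefficients of $H$ order by order in $y$ produces only a \emph{formal} power series: the cohomological equation $c_k(x+\tau)-c_k(x)=\text{(known)}$ has smooth solutions at each order, but nothing forces the resulting series to converge or even to be the Taylor expansion of a smooth function realizing $F_2$ as its time-$\tau$ map. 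Likewise, the existence of a generating function $S(x,\bar y)=x\bar y+\tau\bar y+\bar y^2\sigma(x,\bar y)$ encodes $F_2$ implicitly but does not by itself yield an autonomous Hamiltonian whose time-$\tau$ map is $F_2$; the phrase ``Hamilton-Jacobi substitution'' does not correspond to a standard procedure here.

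The paper avoids this difficulty entirely by exploiting the hyperbolic fixed point $P$ at the end of the link, which your purely local approach ignores. Near $P$ one has the symplectic Birkhoff normal form $F(x,y)=(e^{q(xy)}x,\,e^{-q(xy)}y)$, and these coordinates can be pushed forward along $W^u(P)$ by iterating $F$, so they cover a neighborhood of $D_m$. The point is that in Birkhoff form $F$ \emph{is manifestly} the time-$1$ map of the autonomous Hamiltonian $H(x,y)=h(xy)$ with $h'=q$: no suspension needs to be constructed. The explicit symplectic change $X=\ln x/q(xy)$, $Y=h(xy)$ then conjugates $F$ to $(X,Y)\mapsto(X+1,Y)$ in one stroke. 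In short, the missing idea in your argument is that the saddle at the end of the link supplies the integrable structure (the invariant function $xy$, hence $Y=h(xy)$) for free.
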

\begin{proof}
Put $P$ to the origin of coordinates and bring the map to the Birkhoff normal form by a symplectic $C^\infty$ coordinate
transformation \cite[Thm.~1]{Ch83}. This means that we introduce symplectic coordinates $(x,y)$ near $P$
such that the map $F$ near $P$ will be given by
\begin{equation}\label{birkf}
(x,y)\mapsto (\exp(q(xy))\cdot  x, \exp(-q(xy))\cdot  y)
\end{equation}
for some function $q \in C^\infty(\R,\R)$ with $q(0)>0$. Note that the unstable manifold of $P$ is given by $y=0$ in these coordinates. By iterating $F$ forward, we can extend the domain of the Birkhoff coordinates to a small neighborhood of any compact subset of $W^u(P)$. In particular, we may assume that the map $F$ is given by (\ref{birkf}) near the $m$-fundamental interval $D_m$.
Observe that $D_m:= \{(x,y): \; \tilde x\in [x_0, e^{m q(0)} x_0], \; y=0\}$ in these coordinates, for some $x_0\neq 0$ (by making, if necessary, the coordinate change
$(x,y)\to-(x,y)$, we can always make $x_0>0$).

Let $h=\int q$. Obviously, $F$ is the time-1 map of the flow defined by the Hamiltonian $H(x,y)= h(xy)$. 
Put $\displaystyle X(x,y):= \frac{\ln x}{q(xy)}$ and $Y(x,y)= h(x y)$. The map $(x,y)\mapsto (X,Y)$ is a $C^\infty_\omega$-coordinate change near $D_m$ which conjugates $F$ with $(X,Y)\mapsto (X+1,Y)$. 
\end{proof}

Note that the map $(x,y)\mapsto (x+\tau, y)$ is the time-$\tau$ map by the vector filed $\dot x=1,\; \dot y=0$
defined by the Hamiltonian $H(x,y)=y$. Therefore, $x$ plays the role of time and the conserved quantity $y$ can be viewed as energy,
which justifies the ``time-energy'' terminology.

\subsection{Making a bi-link suitable}

A \emph{vertical strip} $V$ is the region $\{(x,y):\; x\in [c_1,c_2], y\in \R\}$ in $\R^2$ for some $c_1<c_2$.

\begin{defi}[Suitable intersection of a heteroclinic bi-link with two strips]
Two vertical strips $V^a$ and $V^b$ intersect a heteroclinic  bi-link  $(\mathring L^a, \mathring  L^b)$ of a dynamics $\mathring  F$  in a \emph{suitable way} if:
\begin{itemize}
\item the intersection of $V^a$ with $\mathring L^a$ is a straight $2$-fundamental interval $D_2^a$;
\item the intersection of $V^b$ with $\mathring L^b$ is the disjoint union of a straight $2$-fundamental interval $\mathring D_2^b$ and
a straight $4$-fundamental interval $\mathring D_4^b$;
\item the strip $V^b$ does not intersect $\mathring L^a$;
\item there exists $\tau>0$ such that the map $\mathring F$ in restriction to a neighborhood of $\mathring D_2^a$ is given by $(x,y)\mapsto (x-\tau, y)$
and, in restriction to a neighborhood of $\mathring D_2^b$, it is given by $(x,y)\mapsto (x+\tau, y)$;
\item there exists $n\ge 1$ such that $\mathring F^n$ sends $\mathring D_2^b \cup \mathring F^2(\mathring D_2^b)$ to $\mathring D_4^b$,
and the restriction of $\mathring F^n$ to a neighborhood of $\mathring D_2^b$ is 
\[\mathring F^n:  (x,y)\mapsto \theta - ( \frac 12  x, 2  y)\]
for some $\theta \in \R^2$. 
\end{itemize}\label{suit}
{\em See Fig. \ref{suitablebilink} for an illustration.}
\end{defi}
\begin{figure}[h!]	\centering
		\includegraphics[width=11cm]{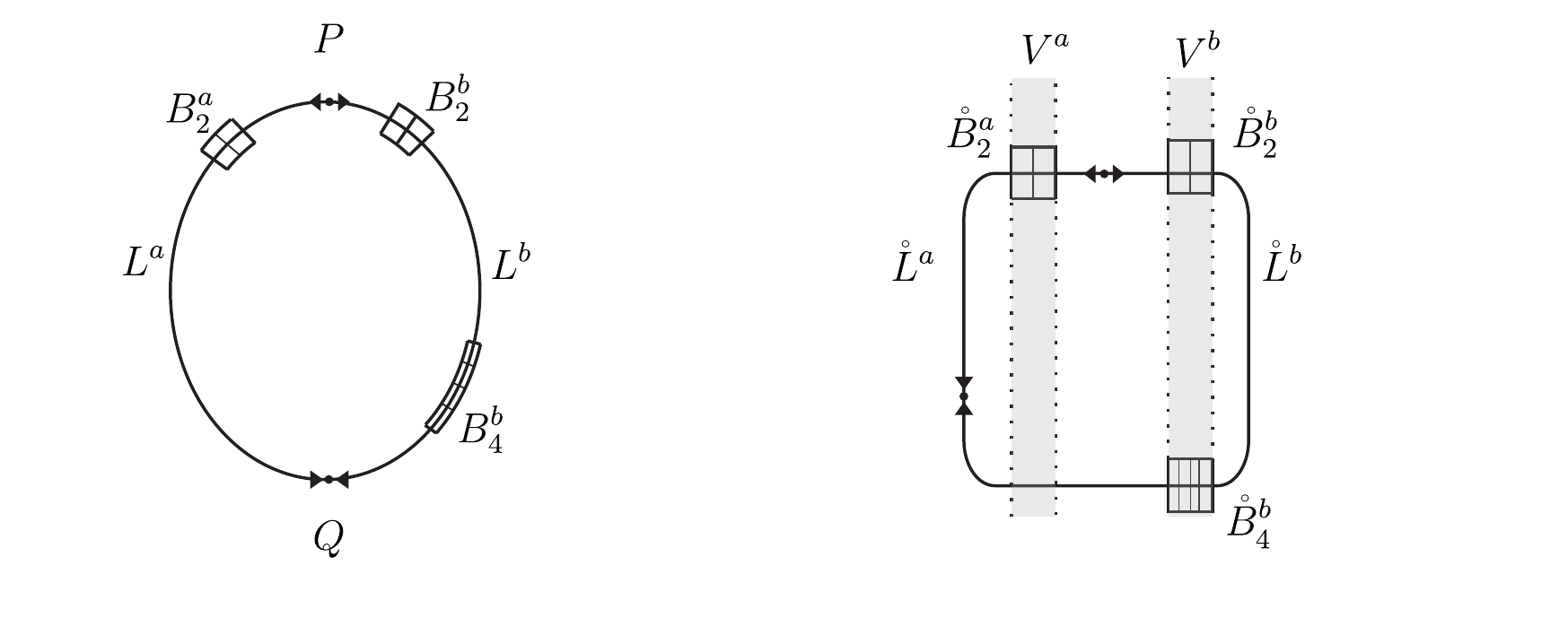}
\caption{Suitable intersection of a heteroclinic bi-link with two  strips}
\label{suitablebilink}
\end{figure}

In Lemma \ref{laprop0} below, we are going to show  that any
conservative diffeomorphism $F$ with 
a  bi-link $(L^a, L^b)$ is  smoothly conjugate to a conservative diffeomorphism $\mathring F$ with a suitable bi-link $(\mathring L^a, \mathring L^b)$. This means that we have a lot of geometric freedom in the choice of the bi-link $(\mathring L^a, \mathring L^b)$, that we shall explain. 

Take any two parallel straight lines in $\R^2$: $\{y=y_1\}$ and $\{y=y_2\} $ with $y_2<y_1$. 
Let $\tau>0$ and $x_a<x_b$ so that $x_a+\tau < x_b-\tau$. Consider the two vertical strips:
$$V^a:= [x_a-\tau ,x_a+\tau ]\times \R \quad \mbox{ and } \quad V^b:= [x_b-\tau ,x_b+\tau ]\times \R.$$
Consider any $C^\infty$-smooth circle $\mathring L$ in $\R^2$ equal to the union of two curves $\mathring L^a$, $\mathring L^b$ satisfying:
\begin{itemize}
\item $\mathring L = \mathring L^a\cup \mathring L^b$  and $\mathring L^a\cap \mathring L^b = \partial \mathring L^a = \partial \mathring L^b$\; .
\item $\mathring L^a\cap V^a = [x_a-\tau ,x_a+\tau ]\times \{y_1\}$ and $\mathring L^a\cap V^b = \varnothing $.
\item $\mathring L^b\cap V^b = [x_b-\tau ,x_b+\tau ]\times \{y_1, y_2\}$.
 \end{itemize}
%
Now we can state:
\begin{lemm}\label{laprop0}
Let  $F\in {\rm Diff}^\infty_\omega(\R^2)$ have a heteroclinic bi-link $(L^a,L^b)$.
Then
there exists a symplectic $C^\infty$-diffeomorphism $\mathring\phi$ of a small neighborhood of $L^a\cup L^b$ into $\R^2$ such that:
\begin{itemize}
\item  $\mathring\phi(L^a\cup L^b)=\mathring L$, 
 $\mathring\phi(L^a)=\mathring L^a$,  $\mathring\phi(L^b)=\mathring L^b$
;\item the vertical strips $V^a$ and $V^b$ intersect the bi-link $(\mathring L^a, \mathring L^b)$ of $\mathring\phi\circ F\circ \mathring\phi^{-1}$ in a suitable way (with $n=4$ in Definition \ref{suit}). 
\end{itemize}
\end{lemm}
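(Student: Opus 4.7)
The plan is to produce $\mathring\phi$ in three stages: (i) construct explicit symplectic charts linearizing $F$ near three disjoint arcs of $L^a\cup L^b$; (ii) extend these charts to a smooth diffeomorphism from a tubular neighborhood of $L^a\cup L^b$ onto a tubular neighborhood of $\mathring L^a\cup \mathring L^b$ that sends $L^a$ to $\mathring L^a$ and $L^b$ to $\mathring L^b$; (iii) correct the resulting map by Moser's relative trick to make it symplectic while preserving the local charts and the links.

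For step (i), pick a $2$-fundamental interval $D^a_2\subset L^a$ and a $2$-fundamental interval $D^b_2\subset L^b$ away from the hyperbolic fixed points. Set $D^b_4:=F^4(D^b_2\cup F^2(D^b_2))$; this is a $4$-fundamental interval on $L^b$, disjoint from $D^b_2\cup F^2(D^b_2)$, since the two sets occupy consecutive blocks of four fundamental intervals along $L^b$. Apply Lemma~\ref{h-lem} twice to obtain symplectic charts $\phi^a,\phi^b$ on neighborhoods of $D^a_2$ and $D^b_2\cup F^2(D^b_2)$ in which $F$ becomes a translation $(x,y)\mapsto(x+\tau',y)$. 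Post-composing with the symplectic rescaling $(x,y)\mapsto(x/\lambda,\lambda y)$, the symplectic involution $(x,y)\mapsto(-x,-y)$ (to adjust the sign of $\tau'$ if needed), and a translation, we place $\phi^a(D^a_2)=[x_a-\tau,x_a+\tau]\times\{y_1\}$ with $\phi^a F(\phi^a)^{-1}:(x,y)\mapsto(x-\tau,y)$, and $\phi^b(D^b_2)=[x_b-\tau,x_b+\tau]\times\{y_1\}$ with $\phi^b F(\phi^b)^{-1}:(x,y)\mapsto(x+\tau,y)$. For the chart near $D^b_4$ we set $\phi^{b,4}:=A\circ\phi^b\circ F^{-4}$, where $A(x,y):=\theta-(x/2,2y)$ is the symplectic affine map (with $\det DA=1$) and $\theta=((3x_b+\tau)/2,\,y_2+2y_1)$. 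A direct computation then gives $\phi^{b,4}(D^b_4)=[x_b-\tau,x_b+\tau]\times\{y_2\}$ and $\phi^{b,4}\circ F^4\circ(\phi^b)^{-1}=A$ on a neighborhood of $\phi^b(D^b_2)$, yielding the required normal form for $\mathring F^4$.

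For step (ii), we build a smooth diffeomorphism $\tilde\phi$ of a tubular neighborhood of $L^a\cup L^b$ onto a tubular neighborhood of $\mathring L^a\cup \mathring L^b$ agreeing with $\phi^a,\phi^b,\phi^{b,4}$ on slightly smaller neighborhoods of the three chosen intervals, with $\tilde\phi(L^a)=\mathring L^a$ and $\tilde\phi(L^b)=\mathring L^b$, and with $\det D\tilde\phi\equiv 1$ along $L^a\cup L^b$ (the last can always be arranged by a pointwise choice of the normal derivative along a one-dimensional curve in a surface). The complement of the three chosen intervals in $L^a\cup L^b$ is a finite union of smooth arcs (with possible break points at the hyperbolic periodic points), and these can be diffeomorphically matched to the corresponding arcs of $\mathring L^a\cup \mathring L^b$ with prescribed boundary behavior; the freedom in the shape of $\mathring L$ outside the strips $V^a,V^b$ accommodates such a matching. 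The resulting $\tilde\phi$ then satisfies $\tilde\phi^*\omega=\omega$ pointwise on $L^a\cup L^b$ and on neighborhoods of the three fundamental intervals.

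For step (iii) we apply a relative Moser argument. In tubular coordinates $(s,t)$ near $L^a\cup L^b$ (arclength $s$ along the link, signed transversal coordinate $t$), the $2$-form $\beta:=\tilde\phi^*\omega-\omega$ satisfies $\beta|_{t=0}=0$ and vanishes on the three neighborhoods where $\tilde\phi$ is already symplectic, so we may write $\beta=t\cdot h(s,t)\,dt\wedge ds$ with $h$ smooth and vanishing near the fundamental intervals; the primitive $\alpha:=\bigl(\int_0^t s'\,h(s,s')\,ds'\bigr)ds$ then vanishes on $L^a\cup L^b$ and near each of $D^a_2,\, D^b_2\cup F^2(D^b_2),\, D^b_4$. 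Setting $\omega_t:=\omega+t\beta$ and defining $X_t$ by $\iota_{X_t}\omega_t=-\alpha$, the Moser flow $\psi_1$ satisfies $\psi_1^*(\tilde\phi^*\omega)=\omega$, is identity on $L^a\cup L^b$ and near the three fundamental intervals; the composition $\mathring\phi:=\tilde\phi\circ\psi_1$ is then the desired symplectic diffeomorphism. The main obstacle is the double requirement in step (iii) that $\psi_1$ preserve $L^a$ and $L^b$ setwise and simultaneously be the identity near the three intervals where the prescribed normal forms already hold; both reduce to the pointwise vanishing of $\alpha$ established above, which itself rests on the Jacobian-one normalization of $\tilde\phi$ along $L^a\cup L^b$ produced in step (ii).
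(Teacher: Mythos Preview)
Your proof is essentially correct and follows the same broad architecture as the paper's: build the right symplectic charts on the three fundamental intervals, extend to a smooth diffeomorphism of a tubular neighborhood of the circle, then correct to make it symplectic. The genuine difference lies in how the correction is carried out.

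The paper uses Weinstein's Lagrangian neighborhood theorem to put both $L^a\cup L^b$ and $\mathring L$ into a standard annulus $\A=(\R/\Z)\times[-\eta,\eta]$, invokes Whitney extension to get a (not necessarily area-preserving) diffeomorphism $G$ of $\A$ agreeing with the prescribed charts near the intervals and preserving the zero section, and then corrects by precomposing with an explicit map $\Psi(t,h)=(t,\int_0^h\det DG(t,s)\,ds)$ satisfying $\det D\Psi=\det DG$ globally. The key observation making $\Psi$ the identity near the prescribed boxes is that, for $\delta$ small, each vertical fiber through $B_0$ lies entirely in the region where $G$ is already symplectic.

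You instead impose $\det D\tilde\phi=1$ only \emph{along} the link (via the normal-derivative freedom) and then run a relative Moser flow. This is a perfectly valid alternative: the convex combination $\omega_t=(1-t)\omega+t\,\tilde\phi^*\omega$ is nondegenerate, and your primitive $\alpha$ vanishes on the zero section and on the vertical strips over the fundamental intervals, so the Moser isotopy fixes both the link and the charts. What your approach buys is that you never need to solve the Jacobian equation globally on the annulus; what the paper's approach buys is an explicit one-line formula for the correction, avoiding any flow argument.

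Two small points to tighten: your parenthetical about ``possible break points at the hyperbolic periodic points'' is misplaced---the argument (yours and the paper's) requires $L^a\cup L^b$ to be a smooth circle, which is the case in the paper's application; and your remark that ``the freedom in the shape of $\mathring L$ outside the strips'' accommodates the matching should be reworded, since $\mathring L$ is fixed in the hypothesis---what you really need (and have) is that the cyclic order of the three intervals on $L^a\cup L^b$ matches that of their targets on $\mathring L$, so the arc-matching in step~(ii) goes through.
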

\begin{proof}
Let us take fundamental intervals $D^a$ of $L^a$ and $D^b$ of $L^b$. Observe that:
\begin{itemize}
\item $D_2^a:= D^a\cup F(D^a)$ and $D_2^b:=  D^b\cup F(D^b)$ are 2-fundamental intervals of $L^a$ and $L^b$, respectively;
\item $D_4^b:=  \cup_{i=4}^{7} F^i(D^b)$ is a 4-fundamental interval of $L^b$.
\end{itemize}
We remark that $D_2^b$ and $D_4^b$ are included in the $8$-fundamental interval $D^b_8:=\cup_{i=0}^{7} F^i(D^b)$ of $L^b$. 

By Lemma \ref{h-lem}, there exist symplectic diffeomorphisms 
$\phi^a$ and $\phi^b$ acting from a neighborhood of $\hat D^a_2:= D^a_2\cup F(D^a_2)$ and, respectively, a neighborhood of 
$\hat D^b_8= D_8^b\cup F(D^b_8)$ into $\R^2$ such that:
\begin{itemize}
\item $\phi^a(D^a_2) = [-2,0]\times \{0\}$ and $\phi^a\circ F\circ (\phi^a)^{-1}$ is the translation to $(-1,0)$ in
a neighborhood of $\phi^a(D^a_2)$;
\item $\phi^b(D^b_8) = [0,8]\times \{0\}$ and $\phi^b\circ f\circ (\phi^b)^{-1}$ is the translation to $(1,0)$ in
a neighborhood of $\phi^b(D^b_8)$.
\end{itemize}
Observe that $\phi^b(D^b_2) = [0,2]\times \{0\}$ and $\phi^b(D^b_4) = [4,8]\times \{0\}$.

Let $\delta>0$ be small and denote $J:=[-\delta\tau , \delta\tau ]$ and $J':= [-\frac{\delta\tau }2, \frac{\delta\tau }2]$. Let
\[B_2^a:= (\phi^a)^{-1}([-2,0]\times J),\qquad B_2^b:= (\phi^b)^{-1}([0,2]\times J)\quad, \quad B_4^b:= (\phi^b)^{-1}([4,8]\times J')\; .\]
For $\delta>0$ small enough, the sets $B_2^b$, $B_4^b$ and $B_2^a$ are disjoint. Take two linear area-preserving maps:
\[A_2:= (x,y)\mapsto (\tau x ,\frac{y}\tau)\quad \text{and}\quad  A_4:= (x,y)\mapsto -\; (\frac{\tau }2x  ,\frac{ 2}\tau y)\; .\]
Notice that the maps 
\[\mathring\phi^a_2 := A_2\circ \phi^a, \qquad
\mathring\phi^b_2 := A_2\circ \phi^b, \qquad 
\mathring\phi^a_4 := A_4\circ \phi^b\]
send, respectively, $B_2^a$, $B_2^b$ and  $B_4^b$ onto
translations of $R:= [-\tau,\tau]\times [-\delta,\delta]$.   

Let $\mathring\phi_0$ be a symplectic embedding of a neighborhood of the disjoint union $B_0:= B_2^a\cup B_2^b\cup B_4^b$ into
$\R^2$ such that:
\begin{itemize}
\item $B_2^a$, $B_2^b$ and $B_4^b$ are sent by $\mathring\phi_0$ onto, respectively, (see Fig. \ref{suitablebilink}):
\[\mathring B_2^a := R+(x_a, y_1),\quad
\mathring B_2^b :=R+(x_b, y_1),\quad 
\mathring B_4^b := R+(x_b, y_2);\]
\item the restriction of $\mathring\phi_0$ to neighborhoods of $B_2^a$, $B_2^b$ and $B_4^b$ is 
the composition of respectively $\mathring\phi^a_2$, $\mathring\phi^b_2$ and $\mathring\phi^b_4$ with some translations. 
\end{itemize}
Note that $\mathring\phi_0$ sends the fundamental intervals $D_2^a$, $D_2^b$ and $D_4^b$ onto, respectively,:
$$\mathring D_2^a:= [-\tau,\tau]\times \{0\}+(x_a, y_1),
\quad
\mathring  D_2^b := [-\tau,\tau]\times \{0\}+(x_b, y_1),\quad 
\mathring  D_4^b := [-\tau,\tau]\times \{0\}+(x_b, y_2)\; ,$$
so these images lie in the curve $\mathring L$, in the intersection with the vertical strips
$V^a$ and $V_b$.

 
Note that the map $\mathring\phi_0\circ F^4\circ \mathring\phi_0^{-1}$ sends $\mathring D_2^b$  into $\mathring D_4^b$ and its restriction 
to a neighborhood of $\mathring D_2^b$ is the composition of a translation with the linear map $(x,y)\mapsto (- x/2, -2y)$, as required by 
Definition \ref{suit} with $n=4$. 
Therefore, to prove the lemma, it suffices to construct a symplectic $C^\infty$-diffeomorphism $\mathring\phi$ of a small neighborhood
of $B_0\cup L^a\cup L^b$ into $\R^2$ which would send $L^a, L^b$ to $\mathring L^a, \mathring L^b$, such that its restriction 
to a neighborhood of $B_0$ would be equal to $\mathring\phi_0$. 

Without the symplecticity requirement, the map $\mathring\phi$ would be given by Whitney extension theorem \cite{Wh34}. Making the
diffeomorphism $\mathring\phi$ symplectic requires an extra effort, as it is done below.

Consider an annulus $\A:= (\R/\Z)\times [-\eta,\eta]$ for a sufficiently small $\eta>0$. Let $t\in \R/\Z$ and $h\in [-\eta,\eta]$
be coordinates in $\A$.  
By the Weinstein's Lagrangian neighborhood theorem \cite{We71}, if $\eta$ is sufficiently small, then there exists an area-preserving 
diffeomorphism $N$ from the annulus $\A$ to a neighborhood
of the bi-link $L^a\cup L^b$, which sends the central circle $S:=\{h=0\}$ to
$L^a\cup L^b$. Similarly, there exists an area-preserving diffeomorphism $\mathring N$ from $\A$ to
a small neighborhood of the curve $\mathring L = \mathring L^a\cup \mathring L^b$, which sends $S$ to $\mathring L$. 

Let $\delta>0$ be small enough, so that the sets $B_0$ and $\mathring\phi_0(B_0)$ will be contained in $N(\A)$ and, respectively,
$\mathring N(\A)$. By Whitney extension theorem, there exists $G\in {\rm Diff}^\infty(\A,\A)$ such that $G(S)=S$ and 
$\mathring N\circ G\circ N^{-1}$ restricted to a neighborhood $U$ of $B_0$ is $\mathring\phi_0$. In particular, $\det DG|_{N^{-1}(U)} =1$. 
The map $G$ is orientation-preserving but it is not, a priori, area-preserving outside of $U$. 

Our goal is to correct $G$ in order to make it area-preserving. More precisely, we are going to construct a $C^\infty$-diffeomorphism 
$\Psi$ of $\A$ such that $\det D\Psi = \det DG$, $\Psi(S)=S$, and the restriction of $\Psi$ to $N^{-1}(B_0)$ is the identity. Then the Lemma
will be proved by taking $\mathring\phi:=\mathring N\circ G\circ \Psi^{-1}\circ N^{-1}$. 

Let us keep fixed the neighborhood $U $ of $B_0$ where $G$ is area-preserving, and let us take $\delta>0$ small. Then $B_0$
can be made as close as we want to $D_2^b\cup D_4^b\cup D_2^a$. 
Hence, for $\delta>0$ small enough, if the image by $N$ of a vertical segment $\{t=const, \; |h|\leq \eta\}$
intersects $B_0$, then it lies entirely in $U$, i.e., $\det DG=1$ everywhere on this segment.
Therefore, if we define the map 
\[\Psi: (t,h) \mapsto (t, \int_{0}^{h} \det DG(t,s)ds),\]
then $\Psi=id$ in the restriction to $N^{-1}(B_0)$. It is also obvious, that $\Psi=id$ in restriction to the central circle $S=\{h=0\}$, and $\det D\Psi= \det DG$.\end{proof}

\subsection{Making the stochastic island suitable}
Consider the map $\check F\in {\rm Diff}^\infty_\omega$ with the stochastic island $\mathcal I$.
Recall that $\mathcal I$ is bounded by 4 heteroclinic bi-links $(L_i^a,L_i^b)$, $i=0,1,2,3$, each of which
is a $C^\infty$-smooth circle. We take a convention that $L_0^a\cup L_0^b$ is the outer circle, i.e., the bi-links $(L_i^a,L_i^b)$ with $i=1,2,3$ 
lie inside the region bounded by $L_0^a\cup L_0^b$. 

Below, we will construct symplectic coordinates $\mathring\phi$ in $\R^2$ such that the island $\mathring\phi({\mathcal I})$ will
satisfy the following suitability conditions.

\begin{defi}[Suitable intersection of 4 heteroclinic bi-link-s with 4 pairs of vertical strips]\label{suitabledef}
We say that 4 pairs of vertical strips $(V_i^a , V_i^b)$, $i=0,1,2,3$, intersect bi-links 
$(\mathring L_j^a,\mathring L_j^b)$, $j=0,1,2,3$ in a suitable way if the following conditions hold true.
\begin{enumerate}[$(H_1)$]
\item For every $0\le i\le 3$, the intersection of  $V^a_i\sqcup V^b_i$ with $(\mathring L_i^a,\mathring L_i^b)$ is suitable
in the sense of Definition \ref{suit}, with $n=4$.
\item For every $j\ge 1$ and every $i\not =  j$, the strips $V^a_i$ and $V^b_i$ do not intersect the circle $\mathring L_j^a\cup \mathring L_j^b$
\end{enumerate}
{\em See Fig. \ref{3hetero} for an illustration.}
\end{defi}
\begin{figure}[h!]
	\centering
		\includegraphics[width=11cm]{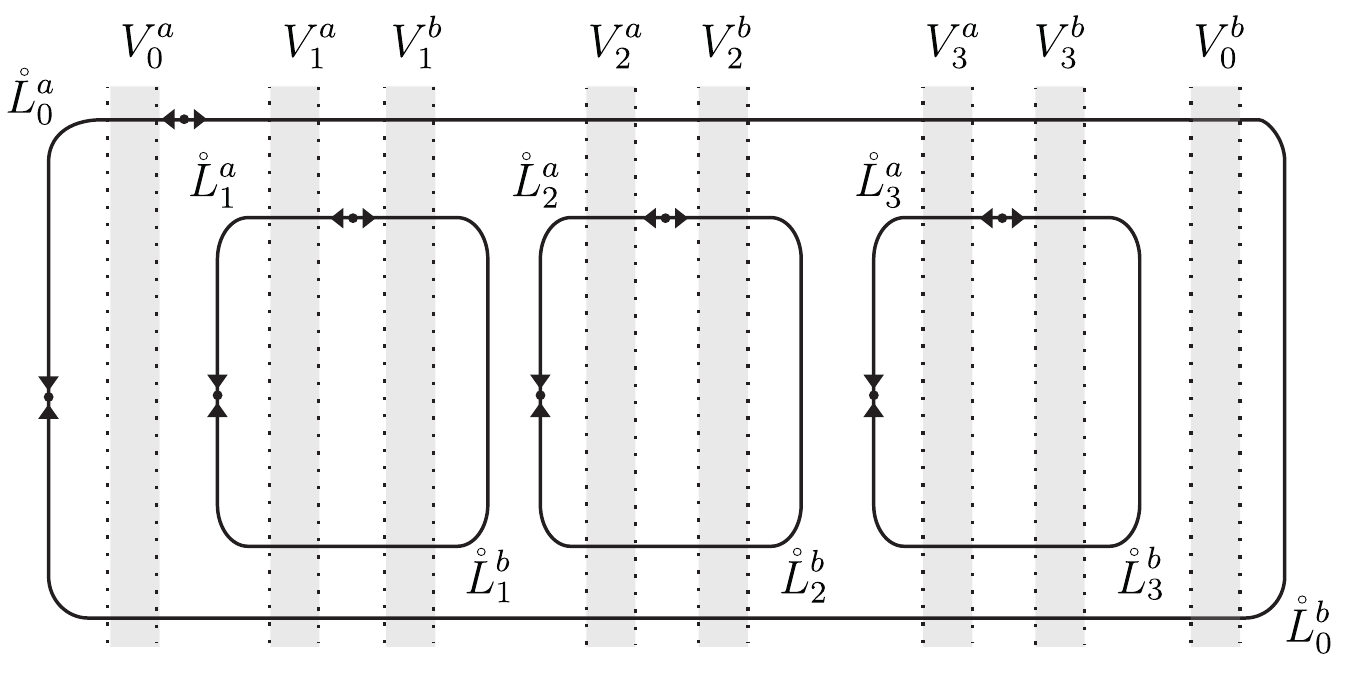}
		\caption{Suitable intersection of the stochastic island and 8 vertical strips}\label{3hetero}
\label{fig:newcoord}
\end{figure}

\begin{prop}\label{suitpro}
There exist $\mathring\phi\in {\rm Diff}^\infty_\omega(\R^2)$ and 4 pairs of   
vertical strips $V_i^a$,  $V^b_i$, $i=0,1,2,3$, 
which intersect the heteroclinic bi-links $(\mathring L_j^a=\mathring\phi(L_j^a), \mathring L_j^b=\mathring\phi(L_j^b))$, $j=0,1,2,3$, of the map $\mathring F=\mathring\phi\circ \check F\circ \mathring\phi^{-1}$ in a suitable way.
\end{prop}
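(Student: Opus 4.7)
The plan is to reduce Proposition \ref{suitpro} to four separate applications of Lemma \ref{laprop0}, one per bi-link, after first placing the bi-links in a convenient global configuration in $\R^2$.

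First I would fix the target data explicitly. Pick eight pairwise disjoint vertical strips $V_0^a,V_0^b,V_1^a,V_1^b,V_2^a,V_2^b,V_3^a,V_3^b$ in $\R^2$, and choose four pairwise disjoint $C^\infty$-smooth circles $\mathring L_0,\mathring L_1,\mathring L_2,\mathring L_3$ (each written as $\mathring L_j=\mathring L_j^a\cup\mathring L_j^b$) so that: $\mathring L_0$ encloses the other three; each $\mathring L_j$ meets $V_j^a\cup V_j^b$ along two or three horizontal segments arranged as in Lemma \ref{laprop0}; and for every $j\in\{1,2,3\}$ and every $i\neq j$, the circle $\mathring L_j$ is disjoint from $V_i^a\cup V_i^b$. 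This can be done by hand by putting the inner bi-links $\mathring L_1,\mathring L_2,\mathring L_3$ in three thin vertical slabs, and choosing $V_0^a,V_0^b$ to lie at extreme $x$-coordinates outside the $x$-range of those three inner slabs. This forces condition $(H_2)$ by construction, while $(H_1)$ will be obtained from Lemma \ref{laprop0}.

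Next I would construct a preliminary symplectic diffeomorphism $\phi_{\text{pre}}\in{\rm Diff}^\infty_\omega(\R^2)$ carrying the four original bi-links $L_j^a\cup L_j^b$ of $\check F$ onto four $C^\infty$-smooth circles $\widetilde L_j$ in the same isotopy class as the targets $\mathring L_j$ (disjoint, with $\widetilde L_0$ enclosing the others). Topologically this is immediate from the classification of disjoint embedded circles in $\R^2$, and one upgrades any smooth such isotopy to a symplectic one by Moser's trick applied to the difference of the relevant area forms. Denote $\widetilde F:=\phi_{\text{pre}}\circ\check F\circ\phi_{\text{pre}}^{-1}$. Then for each $j$ I apply Lemma \ref{laprop0} to the bi-link $\widetilde L_j$ and the strips $V_j^a,V_j^b$, obtaining a symplectic $C^\infty$-diffeomorphism $\mathring\phi_j$ defined on a thin tubular neighborhood $U_j$ of $\widetilde L_j$ such that $\mathring\phi_j(\widetilde L_j)=\mathring L_j$ and $\mathring\phi_j\circ\widetilde F\circ\mathring\phi_j^{-1}$ intersects $V_j^a\cup V_j^b$ suitably in the sense of Definition \ref{suit} (with $n=4$). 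By shrinking, I may assume the Weinstein neighborhoods $U_j$ are pairwise disjoint.

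Finally, I would glue the local data $(\mathring\phi_j)_{j=0}^{3}$ into a single global symplectomorphism $\mathring\phi\in{\rm Diff}^\infty_\omega(\R^2)$ that equals $\mathring\phi_j\circ\phi_{\text{pre}}$ on a slightly smaller neighborhood $U_j'\Subset U_j$ of $\widetilde L_j$ and equals $\phi_{\text{pre}}$ outside $\bigcup_j U_j$. First use Whitney extension inside each annulus $U_j\setminus U_j'$ to obtain a smooth diffeomorphism $G$ with these boundary values; then correct it to a symplectomorphism exactly as at the end of the proof of Lemma \ref{laprop0}, namely, in Weinstein tubular coordinates $(t,h)$ on each $U_j$, set
\[\Psi_j(t,h)=\left(t,\int_{0}^{h}\det DG(t,s)\,ds\right),\]
so that $\Psi_j=\mathrm{id}$ on the two regions where $G$ is already symplectic (inside $U_j'$ and outside $U_j$). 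The map $\mathring\phi:=G\circ\Psi^{-1}$, with $\Psi$ equal to $\Psi_j$ on each $U_j$ and identity elsewhere, is a global area-preserving $C^\infty$-diffeomorphism of $\R^2$ that satisfies the proposition. The main technical obstacle is exactly this symplectic glueing; because the $U_j$ are disjoint, the Moser-style correction from Lemma \ref{laprop0} can be applied in each $U_j$ independently, so the difficulty reduces to verifying that the vertical-flux formula preserves the already-symplectic pieces, which is immediate from $\det DG\equiv 1$ there.
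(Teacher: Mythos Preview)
Your overall strategy---apply Lemma \ref{laprop0} once per bi-link and then patch the four local symplectic charts into a single global $\mathring\phi\in{\rm Diff}^\infty_\omega(\R^2)$---is exactly the paper's strategy. The preliminary map $\phi_{\text{pre}}$ is not in the paper and is not needed: Lemma \ref{laprop0} already lets you prescribe the target curve $\mathring L_j$ freely, so one may apply it directly to the original $L_j$.

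The genuine gap is in your gluing. You ask for a diffeomorphism $G$ equal to $\mathring\phi_j$ on $U_j'$ and to the identity outside $U_j$, and then correct it via $\Psi_j(t,h)=(t,\int_0^h\det DG(t,s)\,ds)$. Two things fail here. First, $\mathring\phi_j$ carries $\widetilde L_j$ onto $\mathring L_j$, and nothing in your setup forces $\mathring L_j$ to sit inside the thin annulus $U_j$; when it does not, the boundary data $\mathring\phi_j|_{\partial U_j'}$ and ${\rm id}|_{\partial U_j}$ do not bound an annulus in the target, so a Whitney extension that is a \emph{global diffeomorphism} of $\R^2$ need not exist. Second, even granting such a $G$, your claim that $\Psi_j={\rm id}$ outside $U_j$ is not automatic: at the outer boundary one gets $\Psi_j(t,h_{\max})=(t,\int_0^{h_{\max}}\det DG(t,s)\,ds)$, and this equals $(t,h_{\max})$ only if that fibre integral equals $h_{\max}$, i.e.\ only under an area-matching condition that you never impose. (In Lemma \ref{laprop0} this trick works because there the whole construction lives inside a single abstract annulus; no matching with ${\rm id}$ on an outer boundary is required.)

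The paper closes exactly this gap by (i) choosing each target circle $\mathring L_i$ to bound a disc of the \emph{same area} as $L_i$, so that every bounded component of $\R^2\setminus\sqcup_i\A_i$ has the same area as the corresponding component of $\R^2\setminus\sqcup_i\mathring\phi_i(\A_i)$, and then (ii) invoking the Dacorogna--Moser extension (Corollary \ref{coroAvila}) to produce the global symplectic $\mathring\phi$ agreeing with each $\mathring\phi_i$ on $\A_i$. If you add the equal-area requirement on the $\mathring L_j$ and replace your annular Moser correction by this Dacorogna--Moser step, your argument goes through and coincides with the paper's.
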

\begin{proof}
By Lemma \ref{laprop0}, for every $i=0,1,2,3$ there exists a pair of vertical strips $V_i^a$,  $V^b_i$ and
a symplectic $C^\infty$ diffeomorphism $\mathring\phi_i$ of a small neighborhood of the bi-link $(L_i^a, L_i^b)$
such that the strips $V_i^a$, $V_i^b$ intersect $(\mathring L^a_i, \mathring L^b_i ):= (\mathring\phi_i(L^a_i), \mathring\phi_i(L^b_i))$ of 
the map $\mathring F_i:=\mathring\phi_i\circ \check F\circ \mathring\phi_i^{-1}$ in a suitable way, ensuring the fulfillment of Condition $(H_1)$
of Definition \ref{suitabledef}. 

Note that in Lemma \ref{laprop0} there is a freedom
in the choice of the curve $\mathring L_i=\mathring\phi_i(L^a_i \cup L^b_i)$. So, we take $\mathring L_i$ such that
it will bound a disc of the same volume as $L_i=L_i^a\cup L_i^b$ does. Also, by choosing the 
constants $y_1, y_2, x_0, x_1$ in Lemma \ref{laprop0} in an appropriate way for each $i$, we can
assure that $\mathring\phi_i(L_i)$ do not intersect for different $i$ and $\mathring\phi_1(L_1)\cup \mathring\phi(L_2)\cup 
\mathring\phi(L_3)$ lies
inside the disc bounded by $\mathring\phi_0(L_0)$, and the strips $V_i^a$ and $V_i^b$ are positioned where we wish,
thus ensuring Condition $(H_2)$ of Definition \ref{suitabledef}.

Let $\A_i$ be a sufficiently small closed annulus around $L_i$, $i=0,1,2,3$. Let us prove the proposition by showing the existence of  a symplectic extension $\mathring\phi$ of the symplectic maps $\mathring\phi_i$ from a neighborhood of the annuli $\A_i$
to the whole of $\R^2$, i.e., a diffeomorphism $\mathring\phi\in {\rm Diff}^\infty_\omega$ such that
$\mathring\phi|_{\A_i}= \mathring\phi_i|_{\A_i}$ for all $i=0,1,2,3$. Since the curve $\mathring L_i$ bound the disc of the same volume as $L_i=L_i^a\cup L_i^b$ does, for each $i$, the annuli ${\A}_i$ is necessarily such that the volume of each of the connected components of $\R^2\setminus \sqcup_i \A_i$
equals to the volume of a corresponding component of $\R^2\setminus \sqcup_i \mathring\phi_i(\A_i)$. 
Then the existence of the  sought sympletic extension $\mathring \phi$ is a standard consequence of  Dacorogna-Moser theorem \cite{DM90}, as given by

\begin{cor}[Cor. 4 \cite{Av10}]\label{coroAvila}
Let $K\subset \R^2$ be  a  compact  set, $U$ be  a  neighborhood  of $K$ and  let $\psi\in C^\infty_\omega(U, \R^2)$  be  close to the identity.
Assume that for every bounded connected component $W$ of $\R^2\setminus U$  and its corresponding one in $\R^2 \setminus \psi (U )$  have  the  same  volume.   Then  there exists $\phi\in Diff^\infty_\omega(\R^2)$ which is $C^\infty$-close to the identity and so that $\phi|K= \psi|K$. 
\end{cor}
\end{proof}

\section{Restoration of broken heteroclinic links}\label{section3}

Let $\mathring F\in Diff^\infty_\omega(\R^2)$ be the map constructed in the previous Section. It has a stochastic island  $\mathring{\mathcal I}$ bounded by 
4 smooth circles - heteroclinic bi-links $(\mathring L_i^a, \mathring L_i^b)$, $i=0,1,2,3$. The bi-link 
$\mathring L_0^a\cup \mathring L_0^b$ forms the outer boundary of $\mathring {\mathcal I}$.   
By construction, there exist 4 pairs of vertical strips $(V_i^a,  V_i^b)$ which intersect
$(\mathring L_i^a, \mathring L_i^b)$ in a suitable way in the sense of Definition \ref{suitabledef}. We denote
$V_i^a=I_i^a\times \R$ and $V_i^b=I_i^b\times \R$, where $I_i^a$, $I_i^b$ are closed disjoint intervals in the $x$-axis.

In this Section we consider perturbations of the map $\mathring F$ and prove Proposition \ref{closing_bi-link}.
Namely, we show that {\em for every $r\ge 1$, for every $\eta>0$, for every $F\in\Diff^{r+8}_\omega$
which is sufficiently close to $\mathring F$ in $C^{r+8}$, there exists $\psi\in C^r(\R,\R)$, supported in $\cup_i (I_i^a\cup I_i^b)$
and with $C^r$-norm smaller than $\eta$, such that the map $\bar F=S_\psi\circ F$ has 4 heteroclinic bi-links
$(L_i^a, L_i^b)$ close to $(\mathring L_i^a, \mathring L_i^b)$.}
We recall that given a function $\psi$, we denote 
\[S_\psi: (x,y)\mapsto  (x, y+\psi(x))\;.\]
 
It is pretty much obvious that to prove this statement, it suffices to show
\begin{prop}\label{propbilinksimple}
Let $r\ge 1$.  Let $\mathring F\in\Diff^{r+4}_\omega$ have a heteroclinic bi-link $(\mathring L^a, \mathring L^b)$ which 
intersects two vertical strips $V^a=I^a\times \R$ 
and $V^b=I^b\times \R$ in a suitable way. For every $F\in\Diff^{r+4}_\omega$ which is $C^{r+4}$-close to $\mathring F$,
there exists $\psi\in C^r(\R,\R)$ which is $C^r$-small and supported in 
$I^a\sqcup I^b$, such that the map $S_\psi\circ F$ has a bi-link $(L^a, L^b)$ close to $(\mathring L^a,\mathring L^b)$. 
\end{prop}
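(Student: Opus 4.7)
The strategy is to translate the closure of the bi-link into a functional equation for $\psi$ and invert it by an implicit function theorem argument.

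For $F$ sufficiently $C^{r+4}$-close to $\mathring F$, the hyperbolic periodic points $P,Q$ carried by $\mathring L^a\cup\mathring L^b$ have unique hyperbolic continuations, and the relevant portions of $W^u(Q;F)$ and $W^s(P;F)$ crossing the strip $V^a$ can be written as $C^{r+4}$-graphs $y=\phi^u_F(x)$ and $y=\phi^s_F(x)$ over $I^a$, both $C^{r+4}$-close to the straight segment $\mathring D_2^a\subset\{y=y_1\}$; likewise in $V^b$ where there are two passes, near $\mathring D_2^b$ and $\mathring D_4^b$. The splittings $\Delta_F:=\phi^u_F-\phi^s_F$ are therefore $C^{r+4}$-small. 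For $\bar F=S_\psi\circ F$ with $\psi$ small, analogous graphs $\phi^{u/s}_{\bar F}$ are defined.

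I would next compute the $\psi$-dependence of $\phi^{u/s}_{\bar F}$ explicitly. Since $\mathring F$ equals the pure translation $(x,y)\mapsto(x\mp\tau,y)$ in a neighborhood of each straight fundamental interval, $\bar F$ there equals $(x,y)\mapsto(x\mp\tau,y+\psi(x\mp\tau))$ modulo corrections of order $\|F-\mathring F\|_{C^{r+4}}$. The invariance of $W^{u/s}$ under $\bar F$ yields the recursion
\[
\phi^{u/s}_{\bar F}(x-\tau)=\phi^{u/s}_{\bar F}(x)+\psi(x-\tau)+O(\|F-\mathring F\|),
\]
and combining with the matching $\phi^{u/s}_{\bar F}=\phi^{u/s}_F$ just outside each strip gives closed-form expressions for $\phi^{u/s}_{\bar F}$ on the strip as $\phi^{u/s}_F(x+m(x)\tau)+\sum_{k}\psi(x+k\tau)$, with the summation range depending on the side.

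The bi-link $(L^a,L^b)$ then persists iff $\phi^u_{\bar F}=\phi^s_{\bar F}$ on a $1$-fundamental sub-interval of $V^a$ and one of $V^b$; closure at $\mathring D_2^b$ automatically propagates to $\mathring D_4^b$ thanks to the reflection structure $\mathring F^n$, so one equation per strip suffices. Substituting the formulas just obtained, the two coincidence conditions reduce to a pair of inhomogeneous first-order difference equations
\[
\psi(x)+\psi(x-\tau)=h_\ast(x),\quad x\in(x_\ast,x_\ast+\tau),\qquad \ast\in\{a,b\},
\]
where $h_a,h_b$ are $C^{r+4}$-small and computable from $\Delta_F$ and $F-\mathring F$. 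Each such equation can be integrated by choosing $\psi$ freely on the left half of its support and letting the equation determine $\psi$ on the right half.

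The hard part will be the boundary behavior: for $\psi\in C^r(\R,\R)$ supported in $I^a\sqcup I^b$, its first $r$ derivatives must vanish at the four endpoints $x_\ast\pm\tau$, which forces finitely many scalar consistency conditions of the form $h_\ast^{(k)}(x_\ast)=h_\ast^{(k)}(x_\ast+\tau)$ for $k=0,\dots,r$. I would expect these to hold through a combination of (i) the symplecticity of $F$, which makes $\Delta_F$ locally exact and forces integral identities around the closed loop $\mathring L^a\cup\mathring L^b$, and (ii) the cross-coupling between the two strips: modifying $\psi|_{I^b}$ shifts the boundary data of $\phi^{u/s}_F$ at $V^a$ through propagation along orbit segments that traverse $V^b$, and symmetrically, supplying the finite scalar freedom required to absorb any residual obstruction. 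Once the linearized system is invertible with a bounded right-inverse, an implicit function theorem in the appropriate $C^r$-Banach space upgrades to a $C^r$-small $\psi$ solving the full nonlinear closure problem; the loss $C^{r+4}\to C^r$ accounts for the regularity spent in controlling the invariant manifolds and in handling the finite-dimensional scalar inversion.
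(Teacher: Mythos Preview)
Your linearized equation at $V^b$ is wrong, and this is the heart of the matter. The link $\mathring L^b$ crosses the strip $V^b$ \emph{twice}: once along $\mathring D_2^b$ and again along $\mathring D_4^b=\mathring F^4(\mathring D_2^b)\cup\mathring F^6(\mathring D_2^b)$. Whichever fundamental interval you choose for the matching, one of the two invariant manifolds has already traversed the other crossing of $V^b$ and hence already carries contributions from $\psi$ there. The correct splitting operator at $F=\mathring F$ is not $\psi(x)+\psi(x\pm\tau)$ but
\[
\mathcal M^b(\psi)(x)=\psi(x)+\psi(x+\tau)-\tfrac12\sum_{j=1}^{4}\psi\!\Bigl(\tfrac{3x_b+j\tau-x}{2}\Bigr),
\]
the extra four terms coming from the passage through $\mathring D_4^b$ together with the contraction $\mathring F^n:(x,y)\mapsto\theta-(x/2,2y)$. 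This operator is \emph{not} invertible on all $C^{k-2}$ functions; it kills constants. Your claim that ``closure at $\mathring D_2^b$ automatically propagates to $\mathring D_4^b$'' is true for the invariant manifolds but does \emph{not} mean the equation decouples from the second crossing.

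Your proposed remedy, using cross-coupling between the two strips to absorb scalar obstructions, is also blocked: by the suitability hypothesis the strip $V^b$ does not meet $\mathring L^a$, so $\psi|_{I^b}$ has no effect whatsoever on $L^a$, and there is no feedback to exploit. The paper's argument is instead \emph{sequential}: first find $\psi_a$ supported in $I^a$ restoring $L^a$ (here your equation $\psi(x)+\psi(x-\tau)=h_a(x)$ is essentially right, and is solved by writing $\psi=\rho\tilde\psi$ with $\rho+\rho(\cdot-\tau)\equiv1$ so the operator becomes the identity on $\tau$-periodic $\tilde\psi$, with no endpoint constraints at all). Then, with $L^a$ already closed, area preservation forces the $L^b$-splitting function to have zero mean on a fundamental interval (this is where symplecticity enters: the region bounded by $L^a$ and arcs of $W^u,W^s$ has invariant area). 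On the zero-mean subspace the extra four terms above form a strict contraction in the $C^{k-2}_0$-norm, so $\mathcal M^b_\rho$ is invertible there and a fixed-point argument yields $\psi_b$. The two-step procedure, each costing two derivatives (one for time-energy charts, one for $C^1$-regularity of the graph-transform operator), is exactly what accounts for the $C^{r+4}\to C^r$ loss.
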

Proposition \ref{closing_bi-link} is inferred from this statement as follows. 
\begin{proof}[Proof of Proposition \ref{closing_bi-link}]
Let $F$ be a $C^{r+8}$-small perturbation of $\mathring F$. By Proposition \ref{propbilinksimple}, for each $i=1,2,3$
there exists a $C^{r+4}$-small function $\psi_i$ supported in $I^a_i\sqcup I^b_i$ such that
the map $S_{\psi_i}\circ \mathring F$ has a bi-link $(\bar L_i^a, \bar L_i^b)$ close to $(\mathring L_i^a, \mathring L_i^b)$.
By property $(H_2)$ of the suitable intersection (see Definition \ref{suitabledef}), 
the vertical strips $V^a_i$ and $V^b_i$ do not intersect the bi-links $(\mathring L_j^a, \mathring L_j^b)$ for $j\neq i$, $j>0$.
Thus the map $S_{\psi_i}$ is identity near the bi-links
$(\mathring L_j^a, \mathring L_j^b)$ with $j\neq i$, $j>0$. Therefore, the map
$S_{\psi_1+\psi_2+\psi_3}\circ F$ has 3 bi-links $(L^a_i, L^b_i)$ close to $(\mathring L^a_i,\mathring L^b_i)$, respectively.

The map $S_{\psi_1+\psi_2+\psi_3}\circ F$ is an $\omega$-preserving diffeomorphism and is $C^{r+4}$-close to $\mathring F$.
Therefore, by applying Proposition \ref{propbilinksimple} to this map and the link $(\mathring L^a_0, \mathring L^b_0)$,
we obtain that there exists a $C^r$-small function $\psi_0$ localized in $I_0^a\cup I_0^b$ such that the map
$S_{\psi_0}\circ S_{\psi_1+\psi_2+\psi_3}\circ F$ has a bi-link $(L^a_0, L^b_0)$ close to $(\mathring L^a_0, \mathring L^b_0)$.
Since $V^a_0$ and $V^b_0$ do not intersect the bi-links $(L_i^a, L_i^b)$ for $i>0$ (by property $(H_2)$
of the suitable intersection), the map $S_{\psi_0}$ is identity near these bi-links, hence it does not destroy them.
Thus, the map $\bar F=S_\psi\circ F$ with $\psi=\psi_0+\psi_1+\psi_2+\psi_3$ has all 4 bi-links $(L_i^a,L_i^b)$ as required.
\end{proof}

\begin{proof}[Proof of Proposition \ref{propbilinksimple}]
This Proposition follows from the two lemmas below which we prove in Sections \ref{lm1} and \ref{lm2} respectively.
\begin{lemm}\label{propbilinksimple1}
Under the hypotheses of Proposition \ref{propbilinksimple}, for every $F\in\Diff^{k}_\omega$ which is $C^{k}$-close to $\mathring F$, $k\geq 3$,
there exists a $C^{k-2}$-small function $\psi_a$ supported in $I^a$ and such that the map $S_{\psi_a}\circ F$ has a link $L^a$ close to $\mathring L^a$.
\end{lemm}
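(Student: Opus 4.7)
The plan is to treat the restoration of $L^a$ as an implicit function problem on the vertical displacement $\psi_a$. For $F$ that is $C^k$-close to $\mathring F$, the hyperbolic continuations $P, Q$ of $\mathring P, \mathring Q$ are defined, and their invariant manifolds intersect $V^a$ in graphs $y=u_F(x)$ and $y=s_F(x)$ over $I^a=[x_0,x_0+2\tau]$, each $C^{k-1}$-close to the horizontal segment $y=y_1$ constituting $D_2^a$. Define the splitting function $\Delta_F(x):=s_F(x)-u_F(x)$. A smooth link $L^a$ close to $\mathring L^a$ is restored for $\bar F = S_{\psi_a}\circ F$ precisely when $\Delta_{\bar F}\equiv 0$ on $I^a$.

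First I would compute how $\psi_a$ affects the splitting. Write $I^a$ as the disjoint union of its entry (right) half $[x_0+\tau,x_0+2\tau]$ and exit (left) half $[x_0,x_0+\tau]$, with $\mathring F=(x,y)\mapsto(x-\tau,y)$ mapping the former onto the latter. Tracking one forward orbit of $W^u(P;\bar F)$ through $V^a$ and one backward orbit of $W^s(Q;\bar F)$ yields, to leading order in $\psi_a$ and $F-\mathring F$:
\[
u_{\bar F}(x)-u_F(x) = \psi_a(x), \qquad s_{\bar F}(x)-s_F(x) = -\psi_a(x-\tau)\quad\text{on the entry half,}
\]
\[
u_{\bar F}(x)-u_F(x) = \psi_a(x)+\psi_a(x+\tau), \qquad s_{\bar F}(x)-s_F(x) = 0\quad\text{on the exit half.}
\]
Consequently $\Delta_{\bar F}=0$ linearizes to
\[
L(\psi_a)(x):=\psi_a(x)+\psi_a(\sigma(x))=\Delta_F(x),\qquad x\in I^a,
\]
where $\sigma$ is the involution swapping the two halves via $x\mapsto x\mp\tau$. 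The image of $L$ is the subspace of $\sigma$-invariant (i.e., $\tau$-periodic on $I^a$) functions, and on this image $L$ admits a bounded right inverse $L^{-1}$ constructed by a smooth partition-of-unity cutoff tailored so that the output extends smoothly by zero past $\partial I^a$.

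The decisive step is the approximate $\tau$-periodicity of $\Delta_F$ for area-preserving $F$. Using the time-energy normal form of Lemma~\ref{h-lem} applied to $F$ on a neighborhood of $D_2^a$, the perturbed map is conjugated by a near-identity symplectomorphism to the exact translation $(X,Y)\mapsto(X+\tau,Y)$; every invariant graph of such a translation is $\tau$-periodic, and pulling back to the $(x,y)$ chart shows $\Delta_F$ to be $\tau$-periodic up to an error of size $O(\|F-\mathring F\|\cdot\|\Delta_F\|)$, i.e. quadratic in the perturbation. I would then run the fixed-point iteration
\[
\psi_a^{(0)}:=L^{-1}\bigl(\Delta_F^{\mathrm{sym}}\bigr),\qquad \psi_a^{(n+1)}:=\psi_a^{(n)}+L^{-1}\bigl((\Delta_{S_{\psi_a^{(n)}}\circ F})^{\mathrm{sym}}\bigr).
\]
Each intermediate map $S_{\psi_a^{(n)}}\circ F$ remains area-preserving, so its splitting is again $\tau$-periodic modulo quadratic error; consequently each step shrinks $\|\Delta^{(n)}\|$ by a factor $O(\|F-\mathring F\|)$. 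For $F$ sufficiently $C^k$-close to $\mathring F$ the iteration converges geometrically to a $C^{k-2}$-small $\psi_a^{\ast}$ supported in $I^a$ with $\Delta_{S_{\psi_a^{\ast}}\circ F}\equiv 0$ on $I^a$; the now-coincident manifolds in $V^a$ glue smoothly with the unperturbed pieces of $W^u(P)\cup W^s(Q)$ outside $V^a$ to produce the desired link $L^a$ close to $\mathring L^a$.

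The main obstacle is the verification of the leading-order $\tau$-periodicity of $\Delta_F$ from the exact-symplectic structure of $F$: this is a discrete Melnikov statement that simultaneously places $\Delta_F$ in the image of $L$ (so that the linearized equation is solvable) and ensures that the required boundary values of $\psi_a$ at $\partial I^a$ are compatible with smooth extension by zero. Handling the quadratic residual through the fixed-point iteration, and bookkeeping the two-derivative loss from $F$ to $\psi_a$, are then technical matters.
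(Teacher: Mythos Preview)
Your approach is correct and shares its core ingredients with the paper's proof, but the paper organizes them more economically. Both arguments hinge on the same two facts: (i) the linearization of the splitting under $\psi_a$ is $L(\psi_a)(x)=\psi_a(x)+\psi_a(\sigma(x))$, whose image is the $\tau$-periodic functions, with right inverse $g\mapsto\rho g$ for a bump $\rho$ satisfying $\rho(x)+\rho(\sigma(x))=1$; and (ii) the splitting of an area-preserving map is $\tau$-periodic in time--energy coordinates. The difference is where you invoke (ii). You keep the original $(x,y)$ chart, so periodicity holds only up to an $O(\|F-\mathring F\|\cdot\|\Delta\|)$ defect; you must then symmetrize, iterate, and argue at each step that the antisymmetric residual is again quadratically small --- which is true, but only because the time--energy chart exists in the background. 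The paper instead \emph{works in} a $\psi$-dependent time--energy chart $\phi^a_\psi$ (constructed so that $\phi^a_\psi\circ S_\psi\circ F\circ(\phi^a_\psi)^{-1}$ is the exact translation by $-\tau$ on $N^a$). In these coordinates the link-splitting function $M^a(S_\psi\circ F,\phi^a_\psi)$ is \emph{exactly} $\tau$-periodic for every $\psi$, so the operator $\tilde\psi\mapsto M^a(S_{\rho\tilde\psi}\circ F,\phi^a_{\rho\tilde\psi})$ takes $C^{k-2}(\R/\tau\Z)$ to itself and equals the identity at $F=\mathring F$ (your formula $\mathcal M^a(\psi)(x)=\psi(x)+\psi(x-\tau)$, combined with $\rho(x)+\rho(x-\tau)=1$). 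A single contraction-mapping step then solves $M^a=0$. This bypasses the symmetrization $\Delta^{\mathrm{sym}}$, the Newton iteration, and the separate bookkeeping of antisymmetric remainders; the derivative loss and the compact support of $\psi_a$ fall out of the construction of $\phi^a_\psi$ and the choice of $\rho$.
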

\begin{lemm}\label{propbilinksimple2}
Under the hypotheses of Proposition \ref{propbilinksimple}, for every $F\in\Diff^{k}_\omega$ which is $C^{k}$-close ($k\geq 3$)
to $\mathring F$ and has a link $L^a$ close to $\mathring L^a$, there exists a $C^{k-2}$-small function $\psi^b$ supported in $I^b$
and such that the map $S_{\psi_b}\circ F$ has a link $L^b$ close to $\mathring L^b$.
\end{lemm}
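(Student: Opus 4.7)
\textbf{Proof plan for Lemma \ref{propbilinksimple2}.}
The plan is to reduce the problem to a single linear equation on a fundamental interval, which I will solve by the implicit function theorem. First, I observe that since $V^b$ is disjoint from $\mathring L^a$ by Definition~\ref{suit}, a compact neighborhood of the $L^a$-branches of $W^u(P;F)$ and $W^s(Q;F)$ stays outside $V^b$ for $F$ close to $\mathring F$. Because $S_{\psi_b}$ is the identity off $V^b$, the map $\bar F:=S_{\psi_b}\circ F$ coincides with $F$ on a neighborhood of $\mathring L^a$; in particular $P,Q$ (both outside $V^b$) remain fixed points of $\bar F$ equal to their own hyperbolic continuations, and the link $L^a$ persists unchanged. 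It will then suffice to arrange $W^u(P;\bar F)=W^s(Q;\bar F)$ on a single fundamental interval $J$ at the $y_1$-window $V^b\cap\{y\approx y_1\}$ of $\mathring L^b$: invariance under $\bar F$ will propagate the matching to the full arc close to $\mathring L^b$, including the $y_2$-window via the explicit iterate $\bar F^n\approx \mathring F^n=(x,y)\mapsto\theta-(x/2,2y)$.

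Next I will linearize in $\psi_b$. Parameterize $W^u(P;F)\cap V^b$ and $W^s(Q;F)\cap V^b$ at the $y_1$-window as graphs $y=u(x),\,y=s(x)$, both $C^{k-1}$-close to $y_1$; the splitting $\sigma:=s-u$ is $C^{k-1}$-small. Because the forward orbit from $P$ along the $L^b$-branch enters $V^b$ first at the $y_1$-window and $\bar F=S_{\psi_b}\circ F$ applies the shear right at that first entry, we will obtain
\[W^u(P;\bar F)|_{y_1}\colon\; y=u(x)+\psi_b(x)+O(\|\psi_b\|^2).\]
Dually, $\bar F^{-1}=F^{-1}\circ S_{\psi_b}^{-1}$ applies $S_{\psi_b}^{-1}$ first (trivial outside $V^b$), so the first backward entry of $W^s(Q;\bar F)$ at the $y_2$-window is unchanged; iterating $\bar F^{-n}$ back to $y_1$ and linearizing will yield
\[W^s(Q;\bar F)|_{y_1}\colon\; y=s(x)+\Phi(\psi_b)(x)+O(\|\psi_b\|^2),\]
where $\Phi$ is a weighted sum of composition operators $\psi_b\mapsto c_k\,(\psi_b\circ\xi_k)$, one per iterate $z_k$ of the relevant orbit lying in $V^b$; the dominant term reads $\tfrac12\,\psi_b\circ\xi$ with $\xi(x)=\theta_1-x/2$, the factor $\tfrac12$ arising from $|\partial_y\mathring F^{-n}|=\tfrac12$. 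The matching condition on $J$ will then become $(I-\Phi)(\psi_b)=\sigma$ on $J$, up to quadratic terms.

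The hard part will be showing that $(I-\Phi)$ is invertible on $C^{k-2}(J)$. Each composition operator in $\Phi$ carries at least a factor $\tfrac12$ from the $y$-contraction of $\mathring F^{-n}$, with further iterates at $y_2$ picking up further powers of $\tfrac12$. The potentially dangerous contributions come from the last few backward iterates inside $V^b$ at $y_1$ (where $\partial_y\mathring F^{-1}=1$ near $\mathring D_2^b$), but those involve $\psi_b$ at arguments of the form $x\pm\tau$; choosing $J=[x_b-\tau,x_b]$ (a fundamental sub-interval of $I^b$) and seeking $\psi_b$ supported in $J$, these translated arguments will fall outside $\mathrm{supp}(\psi_b)$, so the dangerous terms vanish on restriction to $J$. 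This will give $\|\Phi\|_{C^{k-2}(J)\to C^{k-2}(J)}<1$, whence $(I-\Phi)$ is invertible by Neumann series, and the implicit function theorem will produce a $C^{k-2}$-small $\psi_b$ with $\bar\sigma|_J\equiv 0$; extended by zero outside $J$, this $\psi_b$ gives the required shear supported in $I^b$, and $\bar F$ will then have the desired link $L^b$ close to $\mathring L^b$. The main technical obstacle will be the bookkeeping in the linearization: identifying which iterates lie in $V^b$, computing their weighted contributions to $\Phi$, and verifying that the non-contracted ones can be arranged to act off $J$.
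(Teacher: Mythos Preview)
Your plan has a genuine gap at the step where you claim $\|\Phi\|_{C^{k-2}(J)\to C^{k-2}(J)}<1$. The feedback operator $\Phi$ does \emph{not} consist of a single term $\tfrac12\,\psi_b\circ\xi$: near $\mathring D_4^b$ the map $\mathring F$ is the translation $(x,y)\mapsto(x-\tau/2,y)$, so the four visits of the backward orbit to the $y_2$-window all carry the \emph{same} weight $\tfrac12$ (coming from the single transition $\mathring F^{-n}$), not successive powers of $\tfrac12$. The linearized operator at $F=\mathring F$ is therefore (cf.\ the paper's computation)
\[
\Phi(\psi_b)(x)=\tfrac12\sum_{j=1}^{4}\psi_b\!\Bigl(\tfrac{3x_b+j\tau-x}{2}\Bigr),
\]
and after your support trick at most two of these four arguments land in $J$. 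That still gives $\|\Phi\|_{C^0}\le 1$ with equality attained on constants (indeed $\Phi(c)=c$), so the Neumann series for $(I-\Phi)^{-1}$ does not converge in any $C^m$-norm containing the $C^0$ part. The ``dangerous'' terms are not the $\pm\tau$ translations you single out; they are these half-scale terms, and they cannot be removed by shrinking the support.

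The paper resolves this by using the hypothesis ``$F$ has a link $L^a$'' in an essential way you have not: an area argument (the region bounded by $L^a$, pieces of $W^b(P;\bar F)$, $W^b(Q;\bar F)$ and a transversal has the same area as its $\bar F$-image) forces the splitting function to have zero mean over a fundamental interval. One then works in the Banach space of $\tau$-periodic functions with zero mean, equipped with the norm $\|\psi\|=\max_{1\le i\le k-2}\|D^i\psi\|_{C^0}$ (no $C^0$ term). In that norm each derivative of $\psi_b\circ\xi_j$ gains a factor $|\xi_j'|=\tfrac12$, so $\Phi$ genuinely contracts and the fixed-point argument goes through. Your write-up notes that $L^a$ is preserved by $S_{\psi_b}\circ F$ but never exploits it analytically; without the zero-mean constraint (or an equivalent device), the invertibility step fails.
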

Indeed, if an $\omega$-preserving diffeomorphism $F$ is a $C^{r+4}$-small perturbation of $F$, then,
by Lemma \ref{propbilinksimple1}, there exists a $C^{r+2}$-small $\psi_a$ such that the map $S_{\psi_a}\circ F$ has the link $L_a$.
This map is $\omega$-preserving and is $C^{r+2}$-close to $\mathring F$. Therefore, applying Lemma \ref{propbilinksimple2} to this map,
we find that there exists a $C^r$-small $\psi_b$ supported in $I^b$ and such that the map
$S_{\psi_b}\circ S_{\psi_a}\circ F= S_{\psi_a+\psi_b}\circ F$ has the link $L_b$.
As the strip $I^b\times \R$ does not intersect $L^a$, the link $L^a$ also persists for the map $S_{\psi_a+\psi_b}\circ \hat F$,
which gives Proposition \ref{propbilinksimple} with $\psi=\psi_a+\psi_b$.
\end{proof}
 
\subsection{Evaluation of the link splitting}\label{evlink}

The map $\mathring F$ has two saddle fixed points $P$ and $Q$ on the circle $\mathring L^a\cup \mathring L^b$. The point $P$ is repelling on the circle,
while $Q$ is attracting on the circle. 

Let $W^a(P;\mathring F)$ and $W^b(P;\mathring F)$ denote the halves of the unstable manifolds of $P$ equal to, respectively, 
$\mathring L^a\setminus \{Q\}$ and $\mathring L^b\setminus \{Q\}$. Let $W^a(Q;\mathring F)$ and $W^b(Q;\mathring F)$ be the halves of the stable manifolds
of $Q$ equal to respectively $\mathring L^a\setminus \{P\}$ and $\mathring L^b\setminus \{P\}$. 

The points $P$ and $Q$ persist for every $C^1$-close map $F$, and depend continuously on $F$.
The corresponding manifolds $W^a(P; F)$, $W^b(P;F)$, $W^a(Q; F)$ and $W^b(Q; F)$ also persist, and depend continuously on
$F$ as embedded curves of the same smoothness as $F$. To avoid ambiguities, we will fix a sufficiently small neighborhood
of the point $Q$ and then $W^a(P; F)$ and $W^b(P;F)$ will denote the two arcs of $W^u(P,F)$ which connect $P$ with the boundary of this neighborhood
and are close, respectively, to $W^a(P;\mathring F)$ and $W^b(P;\mathring F)$. Similarly, $W^a(Q; F)$ and $W^b(Q; F)$
are the arcs in $W^s(Q,F)$ which connect $Q$ with the boundary of a small neighborhood of $P$
and are close, respectively, to $W^a(Q;\mathring F)$ and $W^b(Q;\mathring F)$.

In general, the links are broken when the map $\mathring F$ is perturbed, so $W^a(P; F)$ and 
$W^b(P; F)$ do not need to coincide with, respectively, $W^a(Q; F)$ and $W^b(Q; F)$.

In the next two Sections we will show, for a given $F\in \Diff^\omega_k(\D)$ which is $C^k$-close to $\mathring F$,
 how to find a $C^{k-2}$-function $\psi$ such that each of the unions
$W^a(P; S_\psi\circ F) \cup W^a(Q; S_\psi\circ F)$ 
and  $W^b(P; S_\psi\circ F) \cup W^b(Q; S_\psi\circ F)$ forms a heteroclinic link between $P$ and $Q$.


In this Section, we obtain formulas for the defect of coincidence between
$W^a(P; S_\psi\circ F)$ and $W^a(Q; S_\psi\circ F)$ or $W^b(P; S_\psi\circ F)$ and $W^b(Q; S_\psi\circ F)$.
In order to do that, we shall use the so-called time-energy coordinates
near the fundamental interval $\mathring D_2^a= V_a\cap L_a$ of the link $L_a$ and the fundamental domain 
$\mathring D_2^b\subset V_b\cap L_b$ of the link $L_b$. 
Recall that by the suitability conditions (see Definition \ref{suit}) there exist $\tau>0$ and $(x_a,y_a)\in \D$,
$(x_b,y_b)\in \D$ such that
$\mathring D_2^a= \{x\in [x_a-2\tau, x_a]\} \times \{y=y_a\}$, 
$\mathring D_2^b= \{x\in [x_b, x_b+2\tau]\} \times \{y=y_b\}$, and the map
$\mathring F$ restricted to a small neighborhood $N^a$ of  $\mathring D_2^a$ or a small neighborhood $N^b$ of  
$\mathring D_2^b$ is given by
\begin{equation}\label{mathrtra}
\mathring F|_{N^a} := (x,y) \mapsto (x-\tau, y), \qquad \mathring F|_{N^b} := (x,y) \mapsto (x+\tau, y)\;.
\end{equation}

\begin{defi}
For an $\omega$-preserving map $F$, which is $C^k$-close to $\mathring F$, an $N^a$-\emph{time-energy chart} $\phi^a$ is 
an $\omega$-preserving diffeomorphism from 
$N^a\cup F(N^a)$ to $\D$ which is $C^{k-1}$-close to identity and satisfies 
\begin{equation}\label{techa}
\phi^a\circ F|_{N^a} = \mathring F\circ \phi^a|_{N^a}\;.
\end{equation}
An $N^b$-time-energy chart $\phi^b$ is an $\omega$-preserving diffeomorphism from 
$N^b\cup F(N^b)$ to $\D$ which is $C^{k-1}$-close to identity and satisfies 
\begin{equation}\label{techb}
\phi^b\circ F|_{N^b} = \mathring F\circ \phi|_{N^b}\;.
\end{equation}
\end{defi}
We notice that the identity map is a time-energy chart for $\mathring F$.  The time-energy charts are not uniquely 
defined, so we
will fix their choice below. In our construction the time-energy charts will be identity near $\{x=x_a\}$ and $\{x=x_b\}$.

Once certain time-energy coordinates are introduced in $N^a\cup F(N^a)$, 
the curves $W^a(P; F)\cap \{N^a\cup F(N^a)\}$ and $W^a(Q; F)\cap \{N^a\cup F(N^a)\}$ become graphs of $\tau$-periodic
functions: the manifolds $W^a(P; F)$ and $W^a(Q; F)$ are invariant with respect to $F$ which means that in the 
time-energy
coordinates they are invariant with respect to the translation to $(-\tau,0)$, see (\ref{techa}),(\ref{mathrtra}).
We denote as $w^u_a(F,\phi^a)$ and $w^s_a(F,\phi^a)$ the $\tau$-periodic functions whose graphs are the curves
$\phi^a(W^a(P; F))$ and $\phi^a(W^a(Q; F))$, respectively.
\begin{defi}
The \emph{link-splitting function} $M^a(F, \phi^a)$ associated to $(N^a, F,\phi^a)$ is the $\tau$-periodic function equal to
$w^u_a(F,\phi^a)-w^s_a(F,\phi^a)$ at $x\in [x_a-\tau,x_a]$. 
\end{defi}
Similarly, let $w^s_b(F,\phi^b)$ and $w^u_b(F,\phi^b)$ be the $\tau$-periodic functions whose graphs are
the curves $\phi^b(W^b(Q;F)$ and $\phi^b(W^b(P;F))$.
\begin{defi}\label{lbd}
The \emph{link-splitting function} $M^b(F, \phi^b)$ associated to $(N^b, F,\phi^b)$ is the $\tau$-periodic function
equal to $w^u_b(F,\phi^b)-w^s_b(F,\phi^b)$ at $x\in [x_b,x_b+\tau]$.
\end{defi}
By the definition, the link $L^a$ or $L^b$ is restored when the function $M^a$ or, respectively, $M^b$ is identically 
zero.

We start with constructing a $C^{k-1}$-smooth time-energy chart for the map $F$.
\begin{lemm}\label{forMenilkov}
There exists a small neighborhood $N^a$ of $\mathring D_2^a$ and a small neighborhood $N^b$ of $\mathring D_2^b$ such 
that 
for every $\omega$-preserving diffeomorphism $F$ which is $C^k$-close to $\mathring F$, $k\geq 3$, there 
exists $C^{k-1}$-smooth time-energy chart $\phi^a$ and $\phi^b$, which depend continuously on $F$ and equal
to identity if $F=\mathring F$. 
\end{lemm}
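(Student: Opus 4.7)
We construct $\phi^a$; the case of $\phi^b$ is identical with $-\tau$ replaced by $\tau$. The plan is to build $\phi^a$ on one symplectic fundamental strip via a generating-function interpolation and then propagate the chart by the $F$-iteration.

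Fix a short vertical segment $\Sigma_0\subset \{x=x_a\}$, transverse to the horizontal dynamics of $\mathring F$. For $F$ sufficiently $C^k$-close to $\mathring F$, the preimage $\Sigma_{-1}:=F^{-1}(\Sigma_0)$ is $C^{k-1}$-close to $\{x=x_a+\tau\}$; together with $\Sigma_0$ it bounds a fundamental strip $R^a$. Since $F$ is area-preserving, the $\omega$-area of $R^a$ equals that of the standard rectangle $[x_a,x_a+\tau]\times \pi_y(\Sigma_0)$.

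The central step is to construct a symplectic $C^{k-1}$-diffeomorphism $\phi^a$ on $R^a$ that equals the identity on a neighborhood of $\Sigma_0$ in $R^a$ and equals $T_\tau\circ F$ on a neighborhood of $\Sigma_{-1}$ in $R^a$. I produce it via a type-2 generating function. Since $F$ is symplectic and $C^k$-close to $T_{-\tau}$, it admits a generating function of the form $S(x,\bar y)=(x-\tau)\bar y+\tilde S(x,\bar y)$ with $\tilde S\in C^k$ small; the composition $T_\tau\circ F$ is then generated by $x\bar y+\tilde S(x,\bar y)$. I define $\phi^a$ via the generating function
\[
G(x,v):=xv+\chi(x)\,\tilde S(x,v),
\]
where $\chi\in C^\infty(\R,[0,1])$ satisfies $\chi\equiv 0$ for $x$ near $x_a$ and $\chi\equiv 1$ for $x$ near $x_a+\tau$. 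By construction, $\phi^a$ is symplectic and $C^{k-1}$-close to the identity; it equals the identity where $\chi=0$ and equals $T_\tau\circ F$ where $\chi=1$.

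Having $\phi^a$ on $R^a$, I extend it to the iterates $F^n(R^a)$ for $n=1,\ldots,n_0$ (with $n_0$ large enough that the iterates cover $N^a\cup F(N^a)$) by enforcing the intertwining $\phi^a(F^n(q)):=T_{-\tau}^{\,n}(\phi^a(q))$. The key virtue of the generating-function construction is that $\phi^a$ already coincides with $T_\tau\circ F$ on a full neighborhood of $\Sigma_{-1}$ in $R^a$, so the first iterated extension to $F(R^a)$ reduces to the identity on the corresponding neighborhood of $\Sigma_0$, matching the $R^a$-side definition on an open set. Thus the piecewise-defined $\phi^a$ is automatically $C^{k-1}$ across each junction $F^n(\Sigma_0)$, with no jet-matching to impose by hand. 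Continuity of $\phi^a$ in $F$, as well as the identity $\phi^a=\mathrm{id}$ at $F=\mathring F$ (since then $\tilde S\equiv 0$), follow directly from the explicit formula for $G$. The main technical obstacle, and the source of the single derivative loss, is the passage from the $C^k$ generating function to the symplectic map itself: solving $y=\partial_x G(x,v)$ for $v=v(x,y)$ by the implicit function theorem costs one derivative, yielding the claimed $C^{k-1}$ regularity.
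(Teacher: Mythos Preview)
Your generating-function interpolation is a legitimate alternative to the paper's construction. The paper instead takes the naive convex combination $\phi_0=(1-\rho)\,\mathrm{id}+\rho\,\mathring F\circ F^{-1}$ (which is \emph{not} symplectic) and then repairs the Jacobian by a fiberwise reparametrization $(x,y)\mapsto(x,\sigma(x,y))$, with $\sigma$ solved from $\partial_y\sigma=\det D\phi_0^{-1}(x,\sigma)$; the single derivative is lost in differentiating $\det D\phi_0$. Your route builds in symplecticity from the start, which is cleaner. Incidentally, the generating function of a $C^k$ symplectic map is $C^{k+1}$ (it is a primitive of a $C^k$ closed $1$-form), so your $G$ is $C^{k+1}$ and the resulting $\phi^a$ is actually $C^k$; the implicit-function step does not cost a derivative, contrary to what you write.

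There is, however, a placement error that breaks the argument as written. You put $\Sigma_0\subset\{x=x_a\}$ and take $\Sigma_{-1}=F^{-1}(\Sigma_0)$, claiming it lies near $\{x=x_a+\tau\}$. But the suitability hypothesis only gives $\mathring F=T_{-\tau}$ on a neighborhood $N^a$ of $\mathring D_2^a=[x_a-2\tau,x_a]\times\{y_a\}$; nothing is assumed about $\mathring F$ (hence about $F$) to the right of $x_a$. So you cannot locate $F^{-1}(\Sigma_0)$, and more importantly $T_\tau\circ F$ need not be close to the identity on your strip $R^a\approx[x_a,x_a+\tau]$, which your interpolation requires for $\phi^a$ to be $C^{k-1}$-close to $\mathrm{id}$. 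The fix is immediate: take the fundamental strip between $\Sigma_0$ and $F(\Sigma_0)\approx\{x=x_a-\tau\}$, which does lie in $N^a$, interpolate there (with $\chi\equiv0$ near $x_a$ and $\chi\equiv1$ near $x_a-\tau$), and propagate to the left via $\phi^a= \mathring F^j\circ\phi^a\circ F^{-j}$ for $j=1,2$—exactly the paper's extension rule. With this correction your argument is complete.
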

\begin{proof}
We will show the proof only for the existence of $\phi^a$. The proof for $\phi^b$ is identical up to the exchange of index $a$
to $b$ and $(-\tau)$ to $\tau$.

Let $\rho\in C^\infty(\R, [0,1])$ be zero everywhere near $x=x_a$ and $1$ everywhere near $x=x_a-\tau$. 
Let $\phi_0(x,y):= (x,y) (1-\rho(x)) + \rho(x) \mathring F\circ F^{-1}(x,y)$. The map $\phi_0$ is a $C^k$-diffeomorphism 
from a small neighborhood
of $\{x\in [x_a-\tau,x_a], \; y=y_a\}$ into $\D$, it is $C^k$-close to identity and equals to
the identity near $(x_a,y_a)$ and to $\mathring F\circ F^{-1}$ near $(x_a-\tau, y_a)$.

Thus, $\phi_0$ satisfies
\[\phi_0\circ F \circ \phi_0^{-1} (x,y) =(x-\tau, y)\; .\]
in a neighborhood of $(x_a, y_a)$ (see (\ref{mathrtra})). Take a small neighborhood of $D^a$ and define there
$\phi^a(x,y)=\phi_0(x, \sigma(x,y))$ where the $C^{k-1}$-function $\sigma$ satisfies $\sigma(x,y_a)=y_a$ and
$\partial_y \sigma = \det D\phi_0^{-1}(x,\sigma)$. By construction, $\det D\phi^a\equiv 1$, i.e., it is
an $\omega$-preserving $C^{k-1}$-diffeomorphism and, since $\phi_0$ is $C^k$-close to the identity, $\phi^a$ is $C^{k-1}$-close to the identity.
Since $\det D\phi_0=1$ everywhere near $(x_a, y_a)$ and $(x_a-\tau,y_a)$, we have that $\sigma\equiv y$ near these points, so
$\phi^a\equiv \phi_0$ there. In particular,
\[\phi^a\circ F = \mathring F \circ \phi\]
near $(x_a,y_a)$.

It follows that we obtain the required time-energy chart if we extend $\phi^a$ to a small neighborhood of 
$\mathring D^a_2 \cup \mathring F \mathring D^a_2$ by the rule
\[\phi^a = : \left\{\begin{array}{cl} \mathring F \circ \phi^a \circ F^{-1} & \text{if } x \in [x_a-2\tau, x_a-\tau],\\
\mathring F^2 \circ \phi^a\circ F^{-2} & \text{if } x \leq  x_a-2\tau\; . \\
\end{array}\right.\]
\end{proof}

Now, take some sufficiently small $\delta>0$. Consider any map $F$ close enough to $\mathring F$ and 
let $\phi^{a,b}$ be the $C^{k-1}$ time-energy charts for $F$,
defined in Lemma \ref{forMenilkov}. Given any close to zero smooth function $\psi(x)$ supported inside 
$[x_a-2\tau+\delta, x_a-\delta]$, we consider the map  
\begin{equation}\label{fbdsp}
\bar F:=  S_{\psi}\circ F
\end{equation}
and define for it the time-energy chart $\phi_\psi^a$ in the open set $N^a\cup F(N^a)$ such that
\begin{equation}\label{mchr1a}
\phi_\psi^a = \left\{\begin{array}{cl} \phi^a \circ F \circ  \bar F^{-1} = \phi^a\circ S_{- \psi} & \text{if } x\geq x_a-\tau, \\
\phi^a\circ F^2\circ \bar F^{-2} & \text{if } x\leq x_a-\tau. \\
\end{array}\right.\end{equation}
Recall that $\psi$ vanishes for $x$ close to $x_a$ and $x$ larger than $x_a$ and for $x$ close to $x_a-2\tau$ and smaller than that. 
Furthermore, if $x$ is close to $x_a-\tau$, then the $x$-coordinate of $\bar F^{-1}(x,y)$ is close to $x_a-2\tau$. Thus, at this point $F\circ \bar F^{-1}= id$ and so 
\[\phi^a\circ F^2\circ \bar F^{-2}(x,y )= \phi^a\circ F\circ \bar F^{-1}(x,y )=\phi^a\circ S_{-\psi}(x,y)\; .\]
Hence $\phi_\psi^a$ has no discontinuities at $x=x_a-\tau$ and the following required conjugacy holds true:
\begin{equation}\label{techar}
\phi_\psi^a \circ \bar F|_{N^a} = \mathring F\circ \phi_\psi^a|_{N^a}\; .
\end{equation}

Similarly, for any close to zero smooth function $\psi(x)$ which is supported inside $[x_b+\delta, x_b+2\tau-\delta]$, 
for the map $\bar F$ given by (\ref{fbdsp}), we define the time-energy chart $\phi_\psi^b$ in $N^b\cup F(N^b)$ by the rule
\begin{equation}\label{mchr1b}
\phi_\psi^b = \left\{\begin{array}{cl} \phi^b \circ F \circ  \bar F^{-1} = \phi^b\circ S_{- \psi} & \text{if } x\leq x_b+\tau, \\
\phi^b\circ F^2\circ \bar F^{-2} & \text{if } x\geq x_b+\tau, \\
\end{array}\right.\end{equation}
such that the identity
\begin{equation}\label{techbr}
\phi_\psi^b \circ \bar F|_{N^b} = \mathring F\circ \phi_\psi^b|_{N^b}
\end{equation}
holds.

With this choice of the time-energy charts, for small $\psi$ we have the link-splitting function $M^a(S_\psi\circ F, \phi_\psi^a)$
(if $\psi$ is supported inside $[x_a-2\tau, x_a]$) or  $M^b(S_\psi\circ F, \phi_\psi^b)$ (if $\psi$ is supported inside $[x_b, x_b+2\tau]$).
This defines the operators $\mathcal M^a: \psi \to M^a(S_\psi\circ F, \phi_\psi^a)$ and $\mathcal M^b: \psi \to M^b(S_\psi\circ F, \phi_\psi^b)$
acting from the space of smooth functions supported inside $[x_a-2\tau, x_a]$ or, respectively, inside $[x_b,x_b+2\tau]$,
to the space of $\tau$-periodic functions of the same smoothness.
\subsection{Regularity of the graph transform operator}
 The regularity of the operators $\mathcal M^{a}$ and $\mathcal M^b$ 
depends on the smoothness class of $\psi$. We choose it to be $C^{k-2}$. For $\delta>0$ small, for every $x\in \R$ we denote by $C^{k-2}_0([x-2\tau +\delta, x-\delta ] ,\R)$
the Banach space of real  $C^{k-2}$-functions supported inside $[x-2\tau +\delta, x-\delta ]$, endowed with the $C^{k-2}$-norm. 
We have the crucial result:
\begin{prop}\label{prepropbililinksimple1}
The operators \[\mathcal M^a: \psi \in C^{k-2}_0([x_a-2\tau +\delta, x_a-\delta ] , \R)\mapsto M^a(S_\psi\circ F, \phi_\psi^a)\in C^{k-2}(\R)\]
\[\mathcal M^b: \psi \in C^{k-2}_0([x_b +\delta, x_a+2\tau-\delta ], \R)\mapsto M^b(S_\psi\circ F, \phi_\psi^b)\in C^{k-2}(\R)\] are of class  $C^1$ in a small neighborhood of zero, and depend continuously on the map $F$.
\end{prop}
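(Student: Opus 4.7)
The plan is to write $\mathcal M^a$ (and symmetrically $\mathcal M^b$) as a finite composition of elementary operators on $C^{k-2}$-Banach spaces and then invoke the chain rule in Banach spaces. Four ``building blocks'' are to be controlled: (i) the affine assignment $\psi\mapsto S_\psi$; (ii) composition on either side with the fixed $C^k$ diffeomorphism $F$ (and $F^{-1}$); (iii) the chart $\phi^a_\psi$ given explicitly by (\ref{mchr1a}); and (iv) the graph-extraction that reads off the $y$-coordinate of a $C^{k-2}$ curve transverse to the vertical fibres.

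First I would observe that because $\mathrm{supp}(\psi)\subset [x_a-2\tau+\delta,\,x_a-\delta]$ is bounded away from both $P$ and $Q$, for $\psi$ small and $F$ close to $\mathring F$ the map $\bar F=S_\psi\circ F$ coincides with $F$ on fixed neighbourhoods of $P$ and $Q$. Therefore the hyperbolic fixed points and the local invariant manifolds $W^u_{\rm loc}(P;\bar F)$ and $W^s_{\rm loc}(Q;\bar F)$ are identical to those of $F$ and do \emph{not} depend on $\psi$; they depend continuously on $F$ in the $C^{k-1}$-topology by the classical stable manifold theorem. The pieces $W^a(P;\bar F)\cap(N^a\cup F(N^a))$ and $W^a(Q;\bar F)\cap(N^a\cup F(N^a))$ are obtained by applying a fixed finite number of iterates of $\bar F$ (respectively $\bar F^{-1}$) to these local manifolds. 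Since $\bar F^{\pm N}$ is a finite word in the affine block $\psi\mapsto S_{\pm\psi}$ and the fixed diffeomorphism $F^{\pm 1}$, and since composition with a fixed $C^k$ map is $C^1$ on $C^{k-2}$-diffeomorphisms by the classical $\Omega$-lemma, the assignment $\psi\mapsto \bar F^{\pm N}(W^{u/s}_{\rm loc})$ is $C^1$ from $C^{k-2}_0(\R)$ into the space of $C^{k-2}$-embedded curves in $N^a\cup F(N^a)$.

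Next the time-energy chart $\phi^a_\psi$ is, by (\ref{mchr1a}), a piecewise finite composition of the fixed map $\phi^a$ (which, by Lemma \ref{forMenilkov}, is only $C^{k-1}$ smooth when $F\in C^k$) with $S_{-\psi}$ and with iterates of $F,\bar F^{-1}$. The same $\Omega$-lemma argument gives $\psi\mapsto\phi^a_\psi$ of class $C^1$ from $C^{k-2}_0(\R)$ into $\Diff^{k-2}_\omega(\R^2)$. Composing the curves obtained above with $\phi^a_\psi$ produces, by the conjugacy (\ref{techar}), curves that are $\tau$-periodic under horizontal translation; extracting their $y$-coordinate as a graph over $x$ is a direct application of the implicit function theorem, well-defined because at the unperturbed data the curves lie on the horizontal line $\{y=y_a\}$, so transversality with the vertical fibres is an open condition. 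Subtraction of the two resulting $\tau$-periodic $C^{k-2}$ functions yields $\mathcal M^a(\psi)$. The argument for $\mathcal M^b$ is identical, with $N^b$ and $\tau$ replacing $N^a$ and $-\tau$. Continuity of the whole construction in $F$ follows because each elementary operator depends continuously on $F$ in the $C^{k-1}$-topology.

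The main obstacle is the book-keeping of derivative losses. One derivative is consumed by the $\Omega$-lemma when we compose with $F\in C^k$, and a second by the fact that the time-energy chart $\phi^a$ provided by Lemma \ref{forMenilkov} is intrinsically only $C^{k-1}$-smooth. Working in $C^{k-2}$ rather than $C^{k-1}$ is precisely what absorbs both losses while preserving $C^1$-smoothness in $\psi$; any strengthening of the regularity of $\mathcal M^a$ would require either a higher-regularity normalisation than Lemma \ref{forMenilkov} delivers, or additional derivatives of $F$.
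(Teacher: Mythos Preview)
Your overall plan---decompose $\mathcal M^{a,b}$ into elementary operations and apply the chain rule---is the same as the paper's, and your observation that the local pieces of $W^{u,s}$ near $P,Q$ are $\psi$-independent is exactly the paper's Lemma \ref{wubfp}. But there is a genuine gap in the way you handle the $S_\psi$-blocks.

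You argue that $\psi\mapsto S_\psi$ is affine and that left composition by the fixed $C^k$ map $F$ is $C^1$ on $C^{k-2}$ by the $\Omega$-lemma. Both facts are true, but they do not combine to give what you need. The map $S_\psi$ is only $C^{k-2}$, so the $\Omega$-lemma applied to left composition by $S_\psi$ yields merely $C^0$ dependence on the inner curve. Concretely, in $\bar F^2=S_\psi\circ F\circ S_\psi\circ F$ the outer $S_\psi$ acts on a curve $\Gamma(\psi)$ that already depends on $\psi$, and $S_\psi\circ\Gamma(\psi)=\Gamma(\psi)+(0,\psi\circ\pi_1\circ\Gamma(\psi))$. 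The formal $\psi$-derivative of the second summand contains $(\psi'\circ\pi_1\circ\Gamma(\psi))\cdot D\Gamma(\psi)$, and $\psi'\in C^{k-3}$, so this derivative does not land in $C^{k-2}$. The affineness of $\psi\mapsto S_\psi$ alone does not rescue this: it is the \emph{joint} dependence $(\psi,\Gamma)\mapsto S_\psi\circ\Gamma$ in $C^{k-2}\times C^{k-2}$ that fails to be $C^1$ when you work with general embedded curves.

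The paper circumvents this by working throughout with \emph{graph transforms} over the fixed $x$-variable rather than with curves viewed as maps. The point---which the paper itself flags as ``rather unexpected''---is that the induced graph transform $(S_{-\psi})^\#$ is simply $w\mapsto w-\psi$; this is bilinear in $(w,\psi)$ and hence $C^\infty$ jointly, with no regularity of $\psi$ required. Since the $x$-coordinate is the independent variable of the graph, $\psi$ is always evaluated at $x$ itself, never at a $\psi$-dependent argument. This is what makes the composition in (\ref{wsifa}) a chain of $C^1$ maps: the $(F^{-1})^\#$, $(\phi^a)^\#$ blocks are $C^1$ by the cited Fact (with $n\geq k-1$, $s=k-2$), and the $(S_{-\psi})^\#$ blocks are $C^\infty$. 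Your ``graph-extraction at the end'' step should instead be interleaved after every application of $F^{\pm1}$; once you do that, your argument and the paper's coincide.
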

 Proposition \ref{prepropbililinksimple1} follows immediately from the two following lemmas:  
\begin{lemm}\label{wubfp}\label{wuafp}
The functions $w^u_a(S_\psi \circ  F, \phi_\psi^a)$ and $w^u_b(S_\psi \circ  F, \phi_\psi^b)$ are independent of $\psi$, and depend continuously on $F$:  
\[w^u_a(S_\psi \circ  F, \phi_\psi^a) =w^u_b(F, \phi^b)\quad \text{and}\quad  w^u_b(S_\psi \circ  F, \phi_\psi^b) =w^u_b(F, \phi^b)\; .\]
\end{lemm}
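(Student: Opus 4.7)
I will treat the $b$-case; the $a$-case is symmetric, with $\mathring F$ translating by $-\tau$ rather than $+\tau$. (The first displayed identity of the lemma contains a typo: the right-hand side should be $w^u_a(F,\phi^a)$.)

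The plan is to reduce the claim to a set-theoretic identification of unstable manifolds on a single fundamental period. By the conjugacy (\ref{techbr}), $\phi_\psi^b$ carries $\bar F:=S_\psi\circ F$ to the translation $(x,y)\mapsto(x+\tau,y)$ on $N^b$, so $\phi_\psi^b(W^u(P;\bar F)\cap N^b)$ is the graph of a $\tau$-periodic function $w^u_b(\bar F,\phi_\psi^b)$, and the analogous statement holds for $\phi^b$ and $F$. It therefore suffices to prove coincidence of the two graphs over $\Sigma:=\{x_b\le x\le x_b+\tau\}$. On that strip, (\ref{mchr1b}) gives $\phi_\psi^b=\phi^b\circ S_{-\psi}$, reducing the claim to
\[
W^u(P;\bar F)\cap\Sigma \;=\; S_\psi\bigl(W^u(P;F)\cap\Sigma\bigr).
\]

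The core step will be to establish the inductive identity $\bar F^{-n}(z)=F^{-n}(S_{-\psi}(z))$ for every $n\ge 1$ and every $z$ with $x$-coordinate in $[x_b,x_b+\tau]$. The $n=1$ case follows directly from $\bar F^{-1}=F^{-1}\circ S_{-\psi}$. For the inductive step I will use that, because $F$ is $C^k$-close to $\mathring F$ and $F|_{N^b}$ is conjugate by $\phi^b$ to translation by $\tau$, the iterate $F^{-1}(S_{-\psi}(z))$ has $x$-coordinate close to $[x_b-\tau,x_b]$, hence outside $\supp\psi\subset[x_b+\delta,x_b+2\tau-\delta]$; inductively, the further backward $F$-iterates of such a point drift leftward toward $P$ along $W^u(P;F)$ and never re-enter $\supp\psi$, so $S_{-\psi}$ acts as the identity at each of them and each further application of $\bar F^{-1}$ collapses to $F^{-1}$. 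With the identity in hand, convergence $\bar F^{-n}(z)\to P$ is equivalent to $F^{-n}(S_{-\psi}(z))\to P$, giving the biconditional $z\in W^u(P;\bar F)\iff S_{-\psi}(z)\in W^u(P;F)$.

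Applying $\phi_\psi^b=\phi^b\circ S_{-\psi}$ to the set identity then yields $\phi_\psi^b(W^u(P;\bar F)\cap\Sigma)=\phi^b(W^u(P;F)\cap\Sigma)$, and $\tau$-periodicity extends this coincidence to the full domain of the charts, proving $w^u_b(\bar F,\phi_\psi^b)=w^u_b(F,\phi^b)$ and in particular its $\psi$-independence. Continuity in $F$ is immediate from the continuous dependence of the local unstable manifold and of $\phi^b$ on $F$ (Lemma \ref{forMenilkov}). The main obstacle I anticipate is justifying that the entire backward $F$-orbit of $S_{-\psi}(z)$ remains outside $\supp\psi$; this is built into the geometry of the suitable link, since a fundamental domain of $W^u(P;F)\cap N^b$ is traversed by $F$ in exactly two steps and leaves $N^b$ on either side, so the orbit visits the strip $V^b$ only once before converging to $P$.
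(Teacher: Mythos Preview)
Your approach is the same idea as the paper's, but your pointwise formulation introduces a gap. You claim the identity $\bar F^{-n}(z)=F^{-n}(S_{-\psi}(z))$ for \emph{every} $z\in\Sigma$ and \emph{every} $n\ge 1$, justified by ``the further backward $F$-iterates of such a point drift leftward toward $P$ along $W^u(P;F)$ and never re-enter $\supp\psi$.'' This drifting-toward-$P$ behavior is only true for points $S_{-\psi}(z)$ lying \emph{on} $W^u(P;F)$. For a generic $z\in N^b\cap\Sigma$ (close to but off the unstable manifold), the backward $F$-orbit of $S_{-\psi}(z)$ will shadow $W^u(P;F)$ for a while, approach $P$, and then be expelled by the hyperbolic repulsion of $W^u$ under $F^{-1}$; after that it can go anywhere, including back into $\supp\psi\subset V^b$. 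So the biconditional $z\in W^u(P;\bar F)\iff S_{-\psi}(z)\in W^u(P;F)$ is not established by your induction.

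The fix is minor: you only need the implication $S_{-\psi}(z)\in W^u(P;F)\Rightarrow z\in W^u(P;\bar F)$. For such $z$, the backward $F$-orbit of $S_{-\psi}(z)$ genuinely lies on $W^u(P;F)$, and the arc of $\mathring L^b$ from $P$ to the left end of $\mathring D_2^b$ does not meet $V^b$ (the only crossings of $\mathring L^b$ with $V^b$ are $\mathring D_2^b$ and $\mathring D_4^b$, and $\mathring D_4^b$ lies forward of $\mathring D_2^b$). Hence your induction is valid for these $z$, giving $S_\psi(W^u(P;F)\cap\Sigma)\subseteq W^u(P;\bar F)\cap\Sigma$; since both sides are graphs over $[x_b,x_b+\tau]$, equality follows. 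The paper avoids this detour entirely by arguing set-theoretically: the arc $\ell^u$ of $W^u(P;\bar F)$ lying in the $\psi$-free region $\{x\le x_b\}$ coincides with that of $W^u(P;F)$ because $\bar F=F$ there and the whole piece of $W^u(P)$ between $P$ and $\ell^u$ stays outside $\supp\psi$; one forward step by $\bar F=S_\psi\circ F$ together with $\phi_\psi^b=\phi^b\circ S_{-\psi}$ on $\Sigma$ then gives $\phi_\psi^b(W^u(P;\bar F))|_\Sigma=\phi^b\circ F(\ell^u)=\phi^b(W^u(P;F))|_\Sigma$ directly.
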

\begin{lemm}\label{wsf}
The following operators are of class $C^1$ and depends continuously on $F$:  
$$\psi \in C^{k-2}_0([x_a-2\tau +\delta, x_a-\delta ] , \R)\mapsto 
w^u_a(S_\psi \circ  F, \phi_\psi^a)
\in C^{k-2}(\R)\; ,$$
$$\psi \in C^{k-2}_0([x_b +\delta, x_b+2\tau -\delta ] , \R)\mapsto w^u_a(S_\psi \circ  F, \phi_\psi^a)\in C^{k-2}(\R)\; .$$
\end{lemm}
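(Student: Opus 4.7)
For each of the two operators in the statement, the claimed $C^1$-regularity in $\psi$ follows immediately from the preceding Lemma~\ref{wuafp}: that lemma asserts $w^u_a(S_\psi\circ F,\phi_\psi^a)=w^u_a(F,\phi^a)$, independently of the perturbation $\psi$. A constant-valued map between Banach spaces is of class $C^\infty$ with vanishing Fr\'echet derivative, so no further work is needed for the first operator. For the second operator, where $\psi$ is supported inside $[x_b+\delta,x_b+2\tau-\delta]$, one additionally uses that this support is disjoint in $x$ from $N^a\cup F(N^a)$ (by the suitability condition together with $V^a\cap V^b=\varnothing$): hence $S_\psi\circ F$ coincides with $F$ on a neighborhood of the chart domain, $\phi_\psi^a=\phi^a$ there, and $w^u_a(S_\psi\circ F,\phi_\psi^a)=w^u_a(F,\phi^a)$ for trivial reasons.

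It then remains to show that $w^u_a(F,\phi^a)\in C^{k-2}(\R)$ depends continuously on $F\in\Diff^k_\omega$ near $\mathring F$ in the $C^k$-topology. I would decompose this into three standard ingredients. First, the saddle fixed point $P=P(F)$ depends $C^{k-1}$-smoothly on $F$ by the implicit function theorem applied to $F(P)=P$, and its hyperbolic splitting at $P$ is likewise $C^{k-1}$-continuous. Second, by the invariant manifold theorem, a fixed-length piece of the local unstable manifold $W^u_{\rm loc}(P(F);F)$ is a $C^{k-1}$-graph over the unstable direction at $P$, depending continuously on $F$ in the $C^{k-1}$-topology (with the customary one-derivative loss from the invariant section construction). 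Third, the chart $\phi^a$ built in Lemma~\ref{forMenilkov} is given by an explicit two-stage formula in $F$ -- a partition-of-unity interpolation between the identity and $\mathring F\circ F^{-1}$, followed by the symplectic correction $\sigma(x,y)$ obtained as an integral of $\det D\phi_0^{-1}$ -- so $\phi^a$ also depends continuously on $F$ in the $C^{k-1}$-topology.

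Combining these ingredients, I would propagate $W^u_{\rm loc}(P;F)$ forward by a finite number of iterations of $F$ until it enters the chart domain $N^a\cup F(N^a)$, and then express the resulting arc as a graph in $\phi^a$-coordinates over $x\in[x_a-\tau,x_a]$; this graph is $w^u_a(F,\phi^a)$, and its $\tau$-periodic extension coincides with the image of the full manifold inside the chart by the translation-invariance $\phi^a\circ F=\mathring F\circ\phi^a$. The final passage from a continuously varying $C^{k-1}$-curve to its $C^{k-2}$-graph representation costs one derivative, yielding the claimed $C^{k-2}$-continuity in $F$. The main technical point is the bookkeeping of the two regularity losses -- one from the invariant manifold theorem and one from the graph representation -- together with the verification that the two-stage symplectic construction of $\phi^a$ in Lemma~\ref{forMenilkov} preserves $C^{k-1}$-continuity in $F$; both are routine but demand attention.
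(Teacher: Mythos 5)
There is a genuine gap, and it starts with a misreading of what the lemma is actually about. The displayed formulas in the statement contain an evident typo: both lines say $w^u_a$, but the intended objects are the \emph{stable}-manifold graphs $w^s_a(S_\psi\circ F,\phi^a_\psi)$ and $w^s_b(S_\psi\circ F,\phi^b_\psi)$. This is forced by the logical structure: Proposition \ref{prepropbililinksimple1} asserts that $\mathcal M^{a,b}=w^u_{a,b}-w^s_{a,b}$ is $C^1$ in $\psi$ and is said to follow from the two lemmas; Lemma \ref{wubfp} already disposes of $w^u_{a,b}$ (constant in $\psi$), so Lemma \ref{wsf} must supply the $C^1$-dependence of $w^s_{a,b}$ on $\psi$ — and the remark immediately after the statement ("the proof works because $S_\psi$ induces a graph transform which is a translation") only makes sense for the stable side, which is the side the perturbation $S_\psi$ actually moves. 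By taking the typo at face value and declaring the operator constant via Lemma \ref{wuafp}, you prove nothing that is not already in the previous lemma, and the $C^1$-regularity of $\psi\mapsto w^s_c(S_\psi\circ F,\phi^c_\psi)$ — the input without which the contraction/fixed-point arguments of Sections 4.3 and 4.4 cannot run — is left entirely unaddressed.

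The missing argument is the following. One shows that the graph of $w^s_a(\bar F,\phi^a_\psi)$ over $[x_a-\tau,x_a]$ is the image of a $\psi$-\emph{independent} arc $\ell^s_a$ of $W^s(Q;F)$ under the composition $\phi^a\circ S_{-\psi}\circ F^{-1}\circ S_{-\psi}\circ F$ (and, for the $b$-link, under a longer composition involving seven iterates, because of the geometry imposed by Definition \ref{suit}). One then factors the induced map on graphs into graph-transform operators. The operators associated with the $C^{k-1}$ maps $\phi^a$, $F^{\pm1}$ are of class $C^1$ as operators on $C^{k-2}$ functions (regularity $C^{n-s}$ with $n=k-1$, $s=k-2$); the delicate point is $(S_{-\psi})^\#$, since $S_{-\psi}$ is only $C^{k-2}$ and the general graph-transform regularity statement gives nothing. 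The saving observation is that $(S_{-\psi})^\#$ is the affine map $w\mapsto w-\psi$, hence jointly $C^\infty$ in $(w,\psi)$ regardless of smoothness class. This is the entire content of the lemma, and it does not appear in your proposal. Your discussion of continuity in $F$ (hyperbolic continuation of $P$, dependence of $\phi^a$ on $F$ through the explicit construction of Lemma \ref{forMenilkov}) is reasonable as far as it goes, but it is the secondary half of the statement.
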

Lemma \ref{wsf} was rather unexpected: the proof works because $S_\psi$ induces a graph transform which is a translation (which is a smooth operator).  

\begin{proof}[Proof of Lemma \ref{wubfp}]
Let $\psi$ be supported inside $[x_a-2\tau+\delta, x_a-\delta]$. Denote $\bar F := S_\psi \circ F$.  The graph of the function
$w^u_a$ is the curve $\phi_\psi^a (W^a(P; \bar F))\cap 
[x_a-\tau, x_a]\times \R$.
It follows from our choice of the chart $\phi_\psi^a$
(see the first line of (\ref{mchr1a})) that this curve is the image by $\phi^a\circ F$ of an arc of the curve 
$\ell^u_a=\bar F^{-1} (W^a(P; \bar F))\cap [x_a-\tau, x_a]\times \R
$, which is an arc of $W^a(P; \bar F)$ lying at $x\geq x_a$.
The set $W^a(P; \bar F)\cap \{x\ge x_a\}$  is a part of the unstable manifold   of $P$ which depends only on the dynamics at $ \{x\ge x_a\}$.  Since $\psi$ is zero at $x\geq x_a$, the map $S_\psi$ is identity there, hence $\bar F|_{ \{x\ge x_a\}}$ equals $F|_{ \{x\ge x_a\}}$, and   $ W^a(P; \bar F)\cap \{x\ge x_a\}$ equals  $W^a(P; F)\cap \{x\ge x_a\}$, in particular the curve $\ell^u_a$ does not depend on $\psi$, i.e. it is the same for $F$ and $\bar F$.

Thus,
\[\phi_\psi^a (W^a(P; \bar F))|_{x\in [x_a-\tau, x_a]}=(\phi^a\circ S_{-\psi})\circ (S_\psi\circ F) =\phi^a\circ F \ell^u_a=\phi^a(W^a(P; F))|_{x\in [x_a-\tau, x_a]}\;,\]
so
$w^u_a(\bar F, \phi_\psi^a)=w^u_a(F, \phi^a)$
is the same for all small $\psi$.

Exactly in the same way, just by changing the index $a$ to $b$ and the interval $[x_a-\tau,x_a]$ to $[x_b,x_b+\tau]$, we obtain that
when $\psi$ is supported inside $[x_b+\delta, x_b+2\tau-\delta]$ the function $w^u_b(\bar F, \phi_\psi^b)$ is independent of $\psi$.
\end{proof}

\begin{proof}[Proof of Lemma \ref{wsf}]
For $c\in \{a,b\}$, let us show that $w^s_{c}(\bar F, \phi_\psi^{c})$ is a $C^1$ function of $\psi\in C^{k-2}(\R)$, with $\bar F := S_\psi \circ F$.
Again, we start with the case $c=a$ and $\psi$ supported inside $[x_a-2\tau+\delta, x_a-\delta]$ and derive
the expression for $w^s_a(\bar F, \phi_\psi^a)$ in this case.

The graph of $w^s_a(\bar F, \phi_\psi^a)$ is the curve $\phi_\psi^a \bar \ell_s^a$ where
$\bar \ell_s^a$ is the piece of $W^a(Q; \bar F)$ in the intersection with $(\phi_\psi^a)^{-1}([x_a-\tau, x_a]\times \R)$.
The curve $W^s(Q; \bar F)$ (a half of the stable manifold of $Q$) is obtained by iterations of its small, adjoining to $Q$ part
by the map $\bar F^{-1}$. The maps $\bar F^{-1}$ and $F^{-1}$ coincide at $x\leq x_a-2\tau+\delta$, and both
are close to the map $\mathring F^{-1}$ which takes the line $\{x=x_a-2\tau+\delta\}\cap N^a$ into $x=x_a-\tau+\delta$, so
the piece of $W^s(Q; \bar F)$ between $Q$ and $x=x_a-\tau+\delta/2$ does not move as $\psi$ varies, i.e., it is the same as for the map $F$.

This means that for all small $\psi$, the image by $\bar F$ of the curve $\bar \ell^s_a$
lies inside the piece of $W^s(Q; F)$ to the left of $x=x_a-\tau+\delta/2$, i.e., $\bar F \bar \ell^s_a \subset F\ell^s_a$,
where the $\psi$-independent curve $\ell^s_a$ is a piece of $W^s(Q; F)$ in the intersection with $\{x\in[x_a-\tau-\delta/2, x_a+\delta/2]\}$. So
\begin{equation}\label{wsia}
\phi_\psi^a (W^a(Q; \bar F))\cap \{x\in [x_a-\tau, x_a]\} \subset \phi_\psi^a \circ \bar F^{-1} \circ F\; \ell^s_a = \phi^a\circ S_{-\psi} \circ F^{-1} \circ S_{-\psi}
\circ F \;\ell^s_a\;,
\end{equation}
as given by (\ref{mchr1a}),(\ref{fbdsp}).

\medskip

Formula (\ref{wsia}) states that the graph of $w^s_a(\bar F, \phi_\psi^a)$ is the image of the $\psi$-independent curve $\ell^s_a$ by the map 
$\phi^a\circ S_{-\psi} \circ F^{-1} \circ S_{-\psi}\circ F$. Thus, the regularity of the
map $\psi\mapsto w^s_a(\bar F, \phi_\psi^a)$ is determined by the regularity of the corresponding {\em graph transform} operators.

The following defines   the \emph{graph-transform operator} $\mathcal F^\#$  associated with a smooth map $\mathcal F$. 
\begin{fact}[See Thm 2.2.5. P. 145 \cite{Ha82}]
Consider a curve ${\mathcal L}=\{(x,y): y=w(x)\}$ in an $(x,y)$-plane, where $w$ is a $C^s$-smooth function defined on some closed interval,  and a $C^n$-smooth map ${\mathcal F}: (x,y)\mapsto (p(x,y), q(x,y))$ ($n\geq s$) defined in a neighborhood $U$ of ${\mathcal L}$.  Then, under the condition
$\partial_x p(x,y) + \partial_y p(x,y) Dw(x)\neq 0$ everywhere in $U$, the image $\mathcal {F L}$ is a curve of the form $y=\tilde w(x)$ where
$\tilde w \in C^s$. Moreover the operator $\mathcal F^\#$ which takes the $C^s$-function $w$ to the $C^s$-function $\tilde w$ is
of  regularity class $C^{n-s}$. 
\end{fact}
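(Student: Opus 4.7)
The plan is to make the graph transform explicit by parametrizing $\mathcal L$ as $t \mapsto (t, w(t))$ and pushing it forward by $\mathcal F$ to obtain $t \mapsto (P(t), Q(t))$, where $P(t) := p(t, w(t))$ and $Q(t) := q(t, w(t))$. The hypothesis reads $P'(t) = \partial_x p(t, w(t)) + \partial_y p(t, w(t))\, w'(t) \neq 0$, so the classical inverse function theorem produces a $C^s$-diffeomorphism $P$ onto its image, and $\tilde w := Q \circ P^{-1}$ is automatically of class $C^s$. This settles well-posedness and the stated regularity of the transformed graph.

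To quantify the regularity of the operator $w \mapsto \tilde w$, I would factor
\[
\mathcal F^{\#}(w) \;=\; \Sigma_q(w) \,\circ\, \bigl(\Sigma_p(w)\bigr)^{-1},
\]
where $\Sigma_g(w) := g(\cdot, w(\cdot))$ for $g \in \{p,q\}$, and the inverse is taken inside the open Banach submanifold of orientation-preserving $C^s$-diffeomorphisms of the underlying interval. Each factor will be treated separately. For the substitution (Nemytskii) operator $\Sigma_g\colon C^s \to C^s$, the formal $k$-th Fréchet derivative at $w$ is the symmetric $k$-linear map
\[
(\delta_1,\dots,\delta_k) \;\longmapsto\; (\partial_y^k g)(\cdot, w(\cdot))\,\delta_1 \cdots \delta_k,
\]
and this lands in $C^s$ precisely when $\partial_y^k g$ admits $s$ further continuous derivatives, i.e.\ when $g \in C^{s+k}$. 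Hence $\Sigma_g$ is of class $C^{n-s}$ and, in general, no smoother: this is the standard loss-of-derivatives phenomenon for composition operators.

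The inversion map $P \mapsto P^{-1}$ on the open set of $C^s$-diffeomorphisms is smooth: differentiating the identity $P \circ P^{-1} = \mathrm{id}$ solves recursively for the Fréchet derivatives of $\mathrm{Inv}$, the first being $\delta P \mapsto -\bigl((DP)^{-1}\,\delta P\bigr)\circ P^{-1}$. Composition of $C^s$-functions is jointly continuous, and smooth in the diffeomorphism slot. Combining the three ingredients by the chain rule gives $\mathcal F^{\#} \in C^{n-s}$. Continuous dependence on $\mathcal F$ is immediate from the very same factorization, since $p$ and $q$ depend continuously in $C^n$ on $\mathcal F$ and the inversion is continuous on its open domain.

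The main obstacle is the sharp Nemytskii estimate: controlling the $k$-th Fréchet derivative of $\Sigma_g$ in the $C^s$-norm requires bounding products of the form $(\partial_y^j g)(\cdot, w(\cdot))\, w^{(s_1)} \cdots w^{(s_\ell)}$, and keeping $s$ total derivatives on the whole expression forces $g$ to have $s + k$ derivatives. This is precisely what fixes the exponent $n-s$ and dictates that no better regularity can be extracted from the argument.
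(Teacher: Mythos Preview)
The paper does not prove this statement; it is quoted verbatim as a known fact from the reference [Ha82], so there is no in-paper argument to compare your proposal against. I therefore assess the proposal on its own merits.

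Your factorization $\mathcal F^{\#}(w)=\Sigma_q(w)\circ(\Sigma_p(w))^{-1}$ is the natural one, and your treatment of the Nemytskii operators $\Sigma_p,\Sigma_q\colon C^s\to C^s$ (class $C^{n-s}$) and of the inversion map on $\mathrm{Diff}^s$ (smooth) is correct. The gap is at the composition step. You claim that composition is ``smooth in the diffeomorphism slot'', but the map
\[
\mathrm{Comp}\colon C^s\times \mathrm{Diff}^s\to C^s,\qquad (f,g)\longmapsto f\circ g,
\]
is linear (hence $C^\infty$) in the \emph{left} slot $f$, while in the right slot $g$ it is only continuous: the Gateaux derivative $D_g(f\circ g)(\delta g)=(f'\circ g)\,\delta g$ lies in $C^{s-1}$, not $C^s$, since $f'\in C^{s-1}$. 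So $\mathrm{Comp}$ is not even $C^1$ in $g$ without an extra derivative on $f$. In your situation $f=\Sigma_q(w)\in C^s$ carries no spare derivative, so the chain rule through the three factors does not close in $C^s$.

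A concrete instance makes this visible: for $\mathcal F(x,y)=(x+y,y)$ one has $\tilde w=w\circ u$ with $u+w\circ u=\mathrm{id}$, and a direct computation gives
\[
D\mathcal F^{\#}(w)(\delta w)=u'\cdot(\delta w)\circ u,\qquad u'=\frac{1}{1+w'\circ u}\in C^{s-1},
\]
so the candidate Fr\'echet derivative maps $C^s$ into $C^{s-1}$, not $C^s$. Thus the exponent $n-s$ cannot be read off from the three-factor decomposition alone.

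The standard route around this is to avoid treating $\Sigma_q(w)$ and $(\Sigma_p(w))^{-1}$ as independent $C^s$ objects whose composition one then differentiates. One either works directly with the implicit relation $p(u,w\circ u)=\mathrm{id}$, $\tilde w=q(u,w\circ u)$ and applies an implicit function theorem in a suitable scale of spaces (absorbing the derivative loss into the setup), or one precomposes by a \emph{fixed} $C^n$ chart so that the variable diffeomorphism stays in a $C^s$-neighborhood of the identity and the problematic term acquires an extra smallness factor rather than merely a bound. Either way the bookkeeping is more delicate than your sketch indicates.
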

 Since $\phi^a(\ell^s_a)$ is the graph of $w^s_a(F, \phi^a)$, we have from (\ref{wsia}) that
\begin{equation}\label{wsifa}
w^s_a(\bar F, \phi_\psi^a)=(\phi^a)^\#\circ (S_{-\psi})^\# \circ (F^{-1})^\# \circ (S_{-\psi})^\#\circ (F\circ (\phi^a)^{-1})^\# \;w^s_a(F,\phi^a)\;.
\end{equation}
Since $\phi^a$ and $F$ are at least of class $C^{k-1}$, the graph-transform operators $(\phi^a)^\#$, $(F^{-1})^\#$, and $(F\circ (\phi^a)^{-1})^\#$
have regularity at least $C^1$ when act from $C^{k-2}$-smooth functions to $C^{k-2}$-smooth functions. We cannot use the same fact for the graph transform operator induced by  $S_{-\psi}$ since the latter map is only of class $C^{k-2}$. However, the map $S_{-\psi}$ is given by 
$(x,y)\mapsto (x, y-\psi(x))$, so the associated graph
transform operator $(S_{-\psi})^\#$ sends a function $w$ to $w-\psi$. Thus, it is linear in both $w$ in $\psi$, i.e., it is of class $C^\infty$ with respect
to both $w$ and $\psi$ (irrespective of their class of smoothness).

Altogether, this implies that the map $\psi\mapsto w^s_a(\bar F, \phi_\psi^a)$ given by
(\ref{wsifa}) is of class $C^1$.


\medskip

 Now, let us handle the case $c=b$. 

We need to derive the expression for $w^s_b(\bar F, \phi_\psi^b)$. The graph of this function is the curve $\phi_\psi^b \bar \ell^s_b$ where
$\bar \ell^s_b$ is the piece of $W^b(Q; \bar F)$ in the intersection with $(\phi_\psi^b)^{-1}(\{x\in [x_b, x_b+\tau]\})$.
The curve $W^b(Q; \bar F)$ is close to $W^b(Q; \mathring F)$. Recall that $W^b(Q,\mathring F)$ coincides with
$W^b(P,\mathring F)$ and forms a heteroclinic link $\mathring L_b$. It intersects the vertical strip
$V_b=\{x\in [x_b,x_b+2\tau]\}$ twice, along two straight line segments 
$\mathring D_2^b=[x_b,x_b+2\tau]\times \{y=y_b\}$ and 
$\mathring D_4^b=\mathring F^4(\mathring D_2^b)\cup \mathring F^6(\mathring D_2^b)=[x_b,x_b+2\tau]\times \{y=y_b'\}$
for some $y_b'<y_b$. The piece
of $W^b(Q,\mathring F)$ between $Q$ and the left end of $\mathring D_4^b$ lies entirely in the region $x\leq x_b$,
i.e., to the left of the vertical strip $V_b$. Also, the map $\mathring F$ in a small neighborhood of 
$\mathring D_4^b$ is given by
\begin{equation}\label{mfrd4b}
\mathring F: (x,y)\mapsto (x-\tau/2,y)
\end{equation}
(as implied by the link suitability Definition \ref{suit}). It follows that the curve $W^b(Q,\bar F)$ intersects the straight 
line $x=x_b$ at a point $Z_b$ with the $y$-coordinate close to $y_b$ such that
the piece of $W^b(Q,\bar F)$ between $Q$ and $\bar F^7  Z_b$ lies entirely in the region
$x< x_b+\tau/2+\delta/2$.
Since $\psi$ is supported inside $[x_b+\delta, x_b+2\tau-\delta]$, it follows that  $F=\bar F$ near this piece
of $W^b(Q,\bar F)$. Therefore, the piece of $W^b(Q,\bar F)$ between $Q$ and $x= x_b+\tau/2+\delta/2$ is 
unmoved as $\psi$ varies,
so it coincides with the corresponding piece of $W^b(Q,F)$.

This piece contains the point $\bar F^7 Z_b$, hence it contains the curve $\bar F^7 \bar \ell_s^b$ for all small $\psi$.
It follows that there exists a $\psi$-independent curve $\ell^s_b$ so that $\bar F^7 \bar \ell^s_b \subset F^7\ell^s_b$ and $\ell^s_b$ is a piece of of $W^s(Q; F)$ in the intersection with $\{x\in[x_b-\delta/2, x_b+\tau+\delta/2]\}$. So
\begin{equation}\label{wsib}
\phi_\psi^b (W^b(Q; \bar F))|_{x\in [x_b, x_b+\tau]} \subset \phi_\psi^b \circ \bar F^{-7} \circ F^7\; \ell^s_b = \phi^b\circ S_{-\psi} \circ( \bar F^{-1}\circ
S_{-\psi})^7\circ F^7 \;\ell^s_b\;,
\end{equation}
see (\ref{mchr1b}),(\ref{fbdsp}).

Since $\phi^b(\ell^s_b)$ is the graph of $w^s_b(F, \phi^b)$, we have from (\ref{wsib}) that
\begin{equation}\label{wsifb}
w^s_b(\bar F, \phi_\psi^b)=(\phi^b)^\#\circ (S_{-\psi})^\# \circ ((F^{-1})^\# \circ (S_{-\psi})^\#)^7\circ (F^7\circ (\phi^b)^{-1})^\# \;w^s_b(F,\phi^b)\;.
\end{equation}
Like we did it for the function $w^s_a$ given by (\ref{wsifa}), we obtain that the map $\psi\mapsto w^s_b(\bar F, \phi_\psi^b)$ defined by
(\ref{wsifb}) is of class $C^1$. 
\end{proof}

The continuity of the operators $\mathcal M^{a}$ and $\mathcal M^{b}$ with respect to $F$ allows us to obtain enough information about 
them by computing them for $F=\mathring F$, which we do in the two following lemmas.

\begin{lemm} When $F=\mathring F$ :
\begin{equation}\label{phiforma}
\mathcal M^a(\psi)(x) = \psi(x)+\psi(x-\tau) \quad\mbox{  for } x\in[x_a-\tau,x_a]. 
\end{equation}
\end{lemm}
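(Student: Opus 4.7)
The plan is to use Lemma \ref{wuafp} together with formula (\ref{wsifa}) and the fact that for $F=\mathring F$ we may take the time–energy chart $\phi^a=id$ (since $\mathring F$ is already the translation $(x,y)\mapsto(x-\tau,y)$ on $N^a$). Under this choice, both halves of the link coincide with the horizontal segment $\{y=y_a\}$, so $w^u_a(\mathring F, id)(x)=y_a$ and $w^s_a(\mathring F, id)(x)=y_a$ identically on the relevant interval.

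First, by Lemma \ref{wuafp} applied with $F=\mathring F$, the unstable graph function is insensitive to $\psi$:
\[
w^u_a(S_\psi\circ \mathring F,\phi^a_\psi)(x)=w^u_a(\mathring F, id)(x)=y_a\;.
\]
Next, specializing formula (\ref{wsifa}) to $F=\mathring F$ and $\phi^a=id$ gives
\[
w^s_a(S_\psi\circ\mathring F,\phi^a_\psi)=(S_{-\psi})^\#\circ (\mathring F^{-1})^\#\circ (S_{-\psi})^\#\circ (\mathring F)^\#\,y_a\;.
\]
I would then compute the four graph transforms in turn. Since $\mathring F$ acts by $(x,y)\mapsto(x-\tau,y)$, its graph transform sends a function $w(\cdot)$ to $w(\cdot+\tau)$, and $(\mathring F^{-1})^\#$ sends $w(\cdot)$ to $w(\cdot-\tau)$. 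The transform $(S_{-\psi})^\#$, induced by $(x,y)\mapsto(x,y-\psi(x))$, is simply the translation $w\mapsto w-\psi$ (this is linear in both $w$ and $\psi$, which is the reason the argument works).

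Tracking these step by step starting from the constant function $y_a$:
\[
y_a \;\xrightarrow{\;(\mathring F)^\#\;}\; y_a \;\xrightarrow{\;(S_{-\psi})^\#\;}\; y_a-\psi(x) \;\xrightarrow{\;(\mathring F^{-1})^\#\;}\; y_a-\psi(x-\tau) \;\xrightarrow{\;(S_{-\psi})^\#\;}\; y_a-\psi(x-\tau)-\psi(x)\;.
\]
Thus $w^s_a(S_\psi\circ\mathring F,\phi^a_\psi)(x)=y_a-\psi(x)-\psi(x-\tau)$, and subtracting from $w^u_a\equiv y_a$ yields
\[
\mathcal M^a(\psi)(x)=\psi(x)+\psi(x-\tau)\qquad\text{for } x\in[x_a-\tau,x_a]\;,
\]
as desired.

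There is no serious obstacle: the computation is entirely mechanical once one has the explicit form (\ref{wsifa}) and the observation that $(S_{-\psi})^\#$ is simply subtraction by $\psi$. The only mild care needed is bookkeeping the direction of the shift in the graph transforms of $\mathring F^{\pm 1}$; one should verify that the image of $\{(x,w(x))\}$ under $(x,y)\mapsto(x-\tau,y)$ is the graph of $x\mapsto w(x+\tau)$ (and not $w(x-\tau)$), which settles the sign/shift conventions used above.
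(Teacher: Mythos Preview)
Your proof is correct and follows essentially the same approach as the paper: set $\phi^a=id$ when $F=\mathring F$, note $w^u_a=w^s_a\equiv y_a$, and plug into formula (\ref{wsifa}) using that $\mathring F$ is the translation by $(-\tau,0)$. The paper's proof is simply a terser version that omits the step-by-step graph-transform computation you spelled out.
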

\begin{proof}
For $F=\mathring F$, the link $L^a$ exists and the fundamental interval $D^a$ is straight, so the curves 
$W^a(P; \mathring F)$ and $W^a(Q; \mathring F)$ coincide for $x\in [x_a-\tau,x_a]$ and lie in the straight line $y=y_a$.
The map $\phi^a$ for $\mathring F$ is identity, so we have $w^u_a(\mathring F, \phi^a) = w^s_a(\mathring F, \phi^a)=y_a$. The map $\mathring F$
is the translation to $(-\tau,0)$ (see (\ref{mathrtra})). Plugging this information into  (\ref{wsifa}) gives 
(\ref{phiforma}) immediately.
\end{proof}

\begin{lemm} When $F=\mathring F$,
\begin{equation}\label{phiformb}\begin{array}{l}
\mathcal M^b(\psi)(x) = \psi(x)+\psi(x+\tau) -\;\frac12\; (\psi(\frac{3x_b+\tau-x}2) + \psi(\frac{3x_b+2\tau-x}2)+\psi(\frac{3x_b+3\tau-x}2)+
\psi(\frac{3x_b+4\tau-x}2))\end{array}
\end{equation}
for $x\in[x_b, x_b+\tau]$. 
\end{lemm}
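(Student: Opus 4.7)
The plan is to mirror the proof of (\ref{phiforma}) but to follow four additional iterations that detour from the $\mathring D_2^b$-level $y=y_b$ down to the $\mathring D_4^b$-level $y=y_b'$ via the contracting return map $\mathring F^4$ prescribed by Definition \ref{suit}. As in the previous lemma, $w^u_b(\bar F,\phi^b_\psi)\equiv y_b$ by Lemma \ref{wubfp}, and $\phi^b_\psi=S_{-\psi}$ on $\{x\le x_b+\tau\}$, so $\mathcal M^b(\psi)(x)=y_b-g(x)+\psi(x)$ where $g(x)$ denotes the ambient $y$-coordinate of $W^s(Q,\bar F)$ over $x\in[x_b,x_b+\tau]$ (the branch of $W^s(Q,\bar F)\cap V^b$ that lies close to $\mathring D_2^b$).

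To compute $g$, I iterate $\bar F=S_\psi\circ\mathring F$ four times starting at $(x,g(x))\in D^b$. The first iteration near $\mathring D_2^b$ uses the translation form $\mathring F:(x,y)\mapsto(x+\tau,y)$ and contributes one correction $\psi(x+\tau)$; the next two iterations pass through $\mathring F^2 D^b\cup\mathring F^3 D^b$, which lie outside $\mathrm{supp}\,\psi$, and leave the $y$-coordinate unchanged. The fourth iteration is handled via the explicit formula $\mathring F^4(x,y)=(\theta_1-x/2,\theta_2-2y)$ of Definition \ref{suit}, with $\theta_1=3x_b/2+2\tau$ and $\theta_2=y_b'+2y_b$ fixed by the constraint $\mathring F^4(\mathring D_2^b\cup\mathring F^2\mathring D_2^b)=\mathring D_4^b$ together with the orientation reversal; one further application of $S_\psi$ then places the image at $X=(3x_b+4\tau-x)/2\in\mathring F^4 D^b$ and produces a linear relation between $g(x)$ and the value $g_4(X)$ of the stable-manifold $y$-coordinate on $\mathring D_4^b$, in which both $\psi(x+\tau)$ and $\psi(X)$ appear explicitly.

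I then determine $g_4$ on $\mathring D_4^b$ independently. The map $\mathring F$ acts on $\mathring D_4^b$ as translation by $-\tau/2$ in $x$ (deduced by tracking $\mathring F^k D^b$ for $k=4,5,6,7$ through the $\mathring F^4$ formula), so the $\bar F$-invariance of $W^s(Q,\bar F)$ yields the telescope $g_4(X-\tau/2)-g_4(X)=\psi(X-\tau/2)$ inside $\mathring D_4^b$. The crucial boundary condition $g_4\equiv y_b'$ on $\mathring F^7 D^b=[x_b,x_b+\tau/2]\times\{y_b'\}$ holds because forward $\bar F$-iterates of this segment immediately exit the strip on the $Q$-side, where $\bar F=\mathring F$ and $W^s(Q,\mathring F)$ coincides with the horizontal arc $\mathring L^b\cap\{x<x_b\}\subset\{y=y_b'\}$. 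Iterating the telescope three times gives the closed form $g_4(X)=y_b'-\psi(X-3\tau/2)-\psi(X-\tau)-\psi(X-\tau/2)$ on $\mathring F^4 D^b$.

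Equating this with the relation produced in the second step, solving for $g(x)$ and substituting $X-k\tau/2=(3x_b+(4-k)\tau-x)/2$ for $k=0,1,2,3$ yields
\begin{equation*}
g(x)=y_b-\psi(x+\tau)+\tfrac{1}{2}\sum_{k=1}^{4}\psi\!\left(\tfrac{3x_b+k\tau-x}{2}\right),
\end{equation*}
and plugging into $\mathcal M^b(\psi)(x)=y_b-g(x)+\psi(x)$ reproduces (\ref{phiformb}). The main obstacle I anticipate is bookkeeping: verifying that the Definition \ref{suit} formula for $\mathring F^4$ is valid along the entire intermediate orbit (including the excursion through $\mathring F^2 D^b\cup\mathring F^3 D^b$ outside the strip), and rigorously executing the exterior-matching step that pins down $g_4$ on $\mathring F^7 D^b$.
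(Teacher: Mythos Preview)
Your proposal is correct and follows essentially the same computational path as the paper. Both arguments track the stable manifold of $Q$ through the same sequence of regions---$D^b$, $\mathring F D^b$, the two excursion pieces $\mathring F^2 D^b\cup\mathring F^3 D^b$ outside the strip, and the four sub-intervals of $\mathring D_4^b$---using the same three ingredients from Definition~\ref{suit}: the translation form of $\mathring F$ on $\mathring D_2^b$, the affine form of $\mathring F^4$, and the translation-by-$(-\tau/2)$ form of $\mathring F$ on $\mathring D_4^b$. The only difference is organizational: the paper plugs $F=\mathring F$, $\phi^b=id$ into the backward graph-transform formula~(\ref{wsifb}) and computes the auxiliary functions $w_7,w_6,\dots,w_4$ by applying $(S_{-\psi})^\#\circ(\mathring F^{-1})^\#$ step by step, whereas you iterate $\bar F=S_\psi\circ\mathring F$ forward and use $\bar F$-invariance of $W^s(Q,\bar F)$ directly. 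Your $g_4$ differs from the paper's $w_k$ by exactly one application of $S_{-\psi}$ (e.g.\ $w_4=g_4-\psi$ on $\mathring F^4 D^b$), which is just the outer $(S_{-\psi})^\#$ in~(\ref{wsifb}). The boundary condition you invoke on $\mathring F^7 D^b$ is precisely the observation, made in the paper in the course of proving Lemma~\ref{wsf}, that the piece of $W^s(Q,\bar F)$ between $Q$ and $\bar F^7 Z_b$ is unmoved by $\psi$.
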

\begin{proof}
When $F=\mathring F$, we have $\phi^b=id$, so
\begin{equation}\label{prbrfr}
\mathcal M^b: \psi \mapsto w^u_b(\mathring F,id) -  
(S_{-\psi})^\# \circ ((\mathring F^{-1})^\# \circ (S_{-\psi})^\#)^7\circ (\mathring F^7)^\# \;w^s_b(\mathring F,id)
\end{equation}
(see (\ref{wsifb})). Recall that $W^b(Q,\mathring F)$ coincides with $W^b(P,\mathring F)$ and intersects the vertical
strip $V_b$ on two straight line segments, $\mathring D_2^b=[x_b,x_b+2\tau]\times \{y=y_b\}$ and 
$\mathring D_4^b=[x_b,x_b+2\tau]\times \{y=y_b'\}$, so 
\begin{equation}\label{wubwsbr}
w^u_b(\mathring F,id)=w^s_b(\mathring F,id)\equiv y_b \;\;\mbox{   at   } x\in[x_b,x_b+\tau].
\end{equation}
Also, the map $\mathring F^7$ from a small neighborhood of $[x_b,x_b+\tau]\times \{y=y_b\}$
to a small neighborhood of $[x_b,x_b+\tau]\times \{y=y_b'\}$ acts as
\[\mathring F^7: (x,y) \mapsto (-\frac x2+ \frac{3x_b}2 +\frac{\tau}2, -2y +2y_b+y_b')\] 
(see Definition \ref{suit}). Therefore,
\[(\mathring F^7)^\# \;w^s_b(\mathring F,id) \equiv y_b' \;\;\mbox{   at   } x\in[x_b, x_b+\tau/2]\;,\]
and 
\[w_7(x):=(S_{-\psi})^\# (\mathring F^7)^\# \;w^s_b(\mathring F,id)(x) = y_b'-\psi(x) \;\;\mbox{   at   } 
x\in[x_b, x_b+\tau/2]\;.\]
Next, by (\ref{mfrd4b}), we obtain
\[w_6(x):=(S_{-\psi})^\# \circ (\mathring F^{-1})^\# w_7(x) = 
y_b' - \psi(x-\tau/2)-\psi(x) \;\;\mbox{   at   }  x\in[x_b+\tau/2, x_b+\tau]\;.\]
Repeating the same procedure two more times, we obtain
\[w_4(x):=((S_{-\psi})^\# \circ (\mathring F^{-1})^\#)^3 w_7(x) = 
y_b' - \psi(x-3\tau/2)- \psi(x-\tau)-\psi(x-\tau/2)-\psi(x) \mbox{   at   }  x\in[x_b+3\tau/2, x_b+2\tau]\;.\]

The map $\mathring F^{-1}$ takes the rectangle $K:=[x_b+3\tau/2,x_b+2\tau]\times [y_b'-\varepsilon, y_b'+\varepsilon]$
(for some small $\varepsilon$) to the right of the strip $V_b$, and its next image $\mathring F^{-2}K$ also lies to the
right of $V_b$. We recall that the the function $\psi$ vanishes there and $S_{-\psi}=id$. Therefore, 
$F^{-1}\circ(S_{-\psi}\circ F^{-1})^2=F^{-3}$ on $K$. As the curve $\{y=w_4(x)\}$ lies in $K$ for small $\psi$,
this implies that
\[(F^{-1}\circ(S_{-\psi}\circ F^{-1})^2)^\# w_4=(F^{-3})^\# w_4\;.\]
By the link suitability conditions (Definition \ref{suit}), the map $F^{-3}$ takes $K$ into a small neighborhood
of $\mathring D_2^b$ and is given by
\[\mathring F^{-3}: (x,y) \mapsto (-2x+ 3x_b +5\tau, -\;\frac y2 + \frac{y_b'}2 +y_b).\] 
This gives
\[\begin{array}{l}
((S_{-\psi}\circ F^{-1})^6)^\# w_7(x)=y_b +\; 
\frac12 (\psi(\frac{3x_b+2\tau-x}2) + \psi(\frac{3x_b+3\tau-x}2)+\psi(\frac{3x_b+4\tau-x}2)+
\psi(\frac{3x_b+5\tau-x}2))\\
\qquad\qquad\qquad\qquad\qquad\qquad\qquad\qquad\qquad -\psi(x) \;\mbox{   at   }  x\in[x_b+\tau, x_b+2\tau]\;.
\end{array}\]
Since $\mathring F^{-1}$ is the translation to $(-\tau,0)$ near $\mathring F(D_2^b)$, we finally obtain
that
\[\begin{array}{l}((S_{-\psi}\circ F^{-1})^7)^\# w_7(x)=y_b+ \;
\frac12 (\psi(\frac{3x_b+\tau-x}2) + \psi(\frac{3x_b+2\tau-x}2)+\psi(\frac{3x_b+3\tau-x}2)+
\psi(\frac{3x_b+4\tau-x}2))\\
\qquad\qquad\qquad\qquad\qquad\qquad\qquad\qquad\qquad-\psi(x+\tau)-\psi(x) \;\;\mbox{   at   }  x\in[x_b+\tau, x_b+2\tau]\;,
\end{array}\]
which implies (\ref{phiformb}) by (\ref{prbrfr}),(\ref{wubwsbr}).
\end{proof}

\subsection{Proof of Lemma \ref{propbilinksimple1}: Restoring the link $L^a$}\label{lm1}

In order to prove Lemma \ref{propbilinksimple1}, we show that for every $\omega$-preserving $F$ which is
$C^k$-close to 
$\mathring F$ there exists a $C^{k-2}$-smooth function $\psi$, supported in $x\in (x_a-2\tau, x_a)$, such that
\begin{equation}\label{bsme}
M^a(S_\psi\circ F, \phi_\psi^a)\equiv 0\;,
\end{equation}
where the $C^{k-2}$-smooth time-energy chart $\phi_\psi^a$ for the map $S_\psi \circ F$ is defined by (\ref{mchr1a}).
We will solve this equation for $\psi$ by reducing it to a fixed point problem for a certain contracting operator.

First, we restrict the class of perturbation functions $\psi$; namely, we will take them in the form
\[\psi(x)=\rho(x)\tilde\psi(x)\;,\]
where $\tilde \psi$ is a $\tau$-periodic function, and $\rho\in C^\infty(\R, [0,1])$ has support in $[x_a-2\tau, x_a]$
and satisfies $\rho(x)+\rho(x-\tau)=1$ for every $x\in [x_a-\tau, x_a]$. Then the operator
\[\mathcal M^a_\rho : \tilde \psi \mapsto \mathcal M^a (\rho\tilde\psi)\;,\]
which provides a correspondence between $\tilde \psi$ and the link-splitting function for the map 
$S_{\rho\tilde\psi}\circ F$,
takes a small ball around zero in the space $C^{k-2}(\R/(\tau \Z) ,\R)$ of $\tau$-periodic functions into the same space. 

By Lemma \ref{prepropbililinksimple1}, the operator $\mathcal M^a_\rho$ is of class $C^1$ on $C^{k-2}(\R/(\tau \Z) ,\R)$.
For $F=\mathring F$, we have $\mathcal M^a_\rho=id$. Indeed, by (\ref{phiforma}),
\[\mathcal M^a_\rho(\tilde\psi)(x) = \rho(x)\tilde\psi(x)+\rho(x-\tau)\tilde\psi(x-\tau) = 
[\rho(x)+\rho(x-\tau)]\tilde\psi(x)=\tilde\psi(x)\;.\]
Due to a continuous dependence on $F$, the operator
\[\tilde \psi \mapsto \tilde \psi - \mathcal M^a_\rho(\tilde\psi)\]
is a contraction in a neighborhood of zero in $C^{k-2}(\R/(\tau \Z) ,\R)$ for all $F\in\Diff^k_\omega(\D)$ which 
are $C^k$-close to $\mathring F$.
Therefore, it has a unique fixed point $\tilde\psi$ near zero in $C^{k-2}(\R/(\tau \Z) ,\R)$, for each such $F$. 

The operator  $\mathcal M^a_\rho$ vanishes at this fixed point. By construction, the corresponding function 
$\psi=\rho\tilde\psi$ solves equation (\ref{bsme}),
i.e., the link-splitting function is identically zero for the map $\bar F=S_{\psi}\circ F$, meaning that 
the link $L^a$ persists for this map. Lemma \ref{propbilinksimple1} is proven.$\qed$

\subsection{Proof of Lemma \ref{propbilinksimple2}: Restoring the link $L^b$}\label{lm2}
In order to prove Lemma \ref{propbilinksimple2}, we show that for every $\omega$-preserving $F$ which is
$C^k$-close to $\mathring F$, if the link $L_a$ persists for the map $F$, then there exists a $C^{k-2}$-smooth function $\psi$, 
supported in $x\in (x_b, x_b+2\tau)$, such that
\begin{equation}\label{bsmeb}
M^b(S_\psi\circ F, \phi_\psi^b)\equiv 0\;,
\end{equation}
where the $C^{k-2}$ time-energy chart $\phi_\psi^b$ for the map $\bar F=S_\psi\circ F$ is defined by (\ref{mchr1b}).

In order to resolve equation (\ref{bsmeb}), we will use the following property of the link-splitting function $M^b$.
\begin{lemm}\label{zeromean}
If the link $L^a$ is persistent for the map $F$, then the link-splitting function
$M^b(S_\psi\circ F, \phi_\psi^b)$ has zero mean for every $\psi$ supported by $(x_b,x_b+2\tau)$:
\[\int_{x_b}^{x_b+\tau} M(\bar F, \phi_\psi^b) dx=0\;.\] 
\end{lemm}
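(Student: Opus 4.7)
The plan is a closed-loop symplectic area argument. Assuming the link $L^a$ is intact for $\bar F := S_\psi \circ F$, I construct for each $x^* \in [x_b, x_b+\tau]$ a simple closed curve $C_{x^*}$ bounding a region close to the stochastic island $\mathring{\mathcal I}$, compute how its enclosed symplectic area depends on $x^*$, and exploit the $\bar F$-invariance of this area to force the mean of $M^b$ to vanish.

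Concretely, $C_{x^*}$ is the concatenation of four arcs: (i) the persistent $L^a$ from $P$ to $Q$; (ii) the arc of $W^b(Q;\bar F)$ going from $Q$ backward to the point $X := (\phi^b_\psi)^{-1}(x^*, w^s_b(x^*))$; (iii) the $\phi^b_\psi$-preimage of the vertical segment $\{x^*\} \times [w^s_b(x^*), w^u_b(x^*)]$, joining $X$ to $X' := (\phi^b_\psi)^{-1}(x^*, w^u_b(x^*))$; (iv) the arc of $W^b(P;\bar F)$ from $X'$ back to $P$. For $\bar F$ sufficiently close to $\mathring F$ and $\psi$ sufficiently small in $C^{k-2}$, the curve $C_{x^*}$ is a $C^{k-2}$-small perturbation of the smooth simple circle $\partial \mathring{\mathcal I}$, hence is itself simple and bounds a well-defined region. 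Since $\bar F$ fixes $P$ and $Q$, preserves the sets $L^a$, $W^b(P;\bar F)$ and $W^b(Q;\bar F)$, and in the chart $\phi^b_\psi$ acts as the translation $(x,y)\mapsto (x+\tau,y)$, one has $\bar F(C_{x^*}) = C_{x^* + \tau}$ as oriented loops.

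Let $A(x^*)$ denote the signed symplectic area enclosed by $C_{x^*}$. Since $\bar F^*\omega = \omega$, we get $A(x^*) = A(x^* + \tau)$. On the other hand, $C_{x^*}$ and $C_{x^* + \tau}$ share the arc $L^a$ and differ only inside the chart, where the symmetric difference is the lens bounded by the graphs of $w^u_b$ and $w^s_b$ over $[x^*, x^* + \tau]$ and by the two vertical segments at $x^*$ and $x^* + \tau$. Using the primitive $\alpha = -y\, dx$ of $\omega$, Stokes' formula and the symplecticity of $\phi^b_\psi$ give $A(x^* + \tau) - A(x^*) = \pm \int_{x^*}^{x^* + \tau} M^b(x)\, dx$, which by the $\tau$-periodicity of $M^b$ equals $\pm \int_{x_b}^{x_b + \tau} M^b(x)\, dx$. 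Equating the two identities yields the desired conclusion $\int_{x_b}^{x_b + \tau} M^b(x)\, dx = 0$.

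The main obstacle in turning this sketch into a formal proof is bookkeeping: first, one must verify carefully that $C_{x^*}$ is indeed a simple closed curve (this follows from the $C^1$-continuity of $L^a$, $W^b(P;\bar F)$ and $W^b(Q;\bar F)$ in $\bar F$, together with the geometric picture of the unperturbed island $\partial\mathring{\mathcal I}$); second, one must fix orientation conventions and verify that the sign in the lens computation matches the orientation induced by the enclosed region. Neither step is hard once the global geometry of the island is fixed, and both become automatic for $\bar F$ in a sufficiently small $C^{k}$-neighborhood of $\mathring F$.
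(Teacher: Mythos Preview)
Your proposal is correct and is essentially the same argument as the paper's: both construct the region bounded by the persistent link $L^a$, arcs of $W^b(P;\bar F)$ and $W^b(Q;\bar F)$, and a vertical segment at abscissa $x^*$ (the paper takes $x^*=x_b$), then use that $\bar F$ is area-preserving and acts as the translation by $\tau$ in the time-energy chart to conclude that the signed area of the lens between the two graphs over $[x_b,x_b+\tau]$ vanishes. One small slip: the loop $C_{x^*}$ you build is close to the single bi-link circle $\mathring L^a\cup\mathring L^b$, not to the full island boundary $\partial\mathring{\mathcal I}$ (which has four components); this does not affect the argument.
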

\begin{proof} 
We may always assume that the map $\phi_\psi^b$ is the restriction to $N_a$ of an area-preserving diffeomorphism or $\R^2$ 
(the possibility of the symplectic extension of an area-preserving diffeomorphism from a disc to the whole $\R^2$ is a standard
fact; see e.g. Corollary 4 in \cite{Av10}).

Let the points $Z^u\in W^b(P,\bar F)\cap N^b$ and $Z^s\in W^b(Q,\bar F)\cap N^b$ have the same $x$-coordinate $x_b$,
i.e., $Z^u=(x_b, w^u(\bar F, \phi_\psi^b)(x_b))$, $Z^s=(x_b, w^s(\bar F, \phi_\psi^b)(x_b))$. As the map $\bar F$ is the translation
to $(\tau,0)$ in the time-energy coordinates in $N^b$, the points $\bar FZ^s$ and $\bar FZ^u$ have the same $x$-coordinate $x_b+\tau$,
and the image of the vertical segment connecting $Z^s$ and $Z^u$ is the vertical segment connecting $\bar FZ^s$ and $\bar FZ^u$.

Since the support of $\psi$ lies in $(x_b,x_b+2\tau)$, the maps $F$ and $\bar F$ coincide in a neighborhood of $L_a$,
so the link $L_a$ is not split for all $\psi$ under consideration. Thus, we may consider a region $\mathcal D$ bounded
by the link $L^a$, the piece of $W^b(P,\bar F)$ between $Z^u$ and $P$, the piece of $W^b(Q,\bar F)$ between $Q$ and $Z^s$, and
the vertical segment that connects $Z^s$ and $Z^u$. The region $\bar F \mathcal D$ has the same area as $\mathcal D$.
It is bounded by the link $L^a$,
the piece of $W^b(P,\bar F)$ between $\bar F Z^u$ and $P$,  the piece of $W^b(Q,\bar F)$ between $Q$ and $\bar F Z^s$, and
the vertical segment that connects $\bar F Z^s$ and $\bar F Z^u$. The equality of the areas means that the area of the region
between the curves $W^b(P,\bar F): \{y=w^u(\bar F, \phi_\psi^b)(x)\}$ and $W^b(Q,\bar F): \{y=w^s(\bar F, \phi_\psi^b)(x)\}$
at $x\in[x_b,x_b+\tau]$ is zero (see fig. \ref{int:null}).
\begin{figure}[h!]
	\centering
		\includegraphics{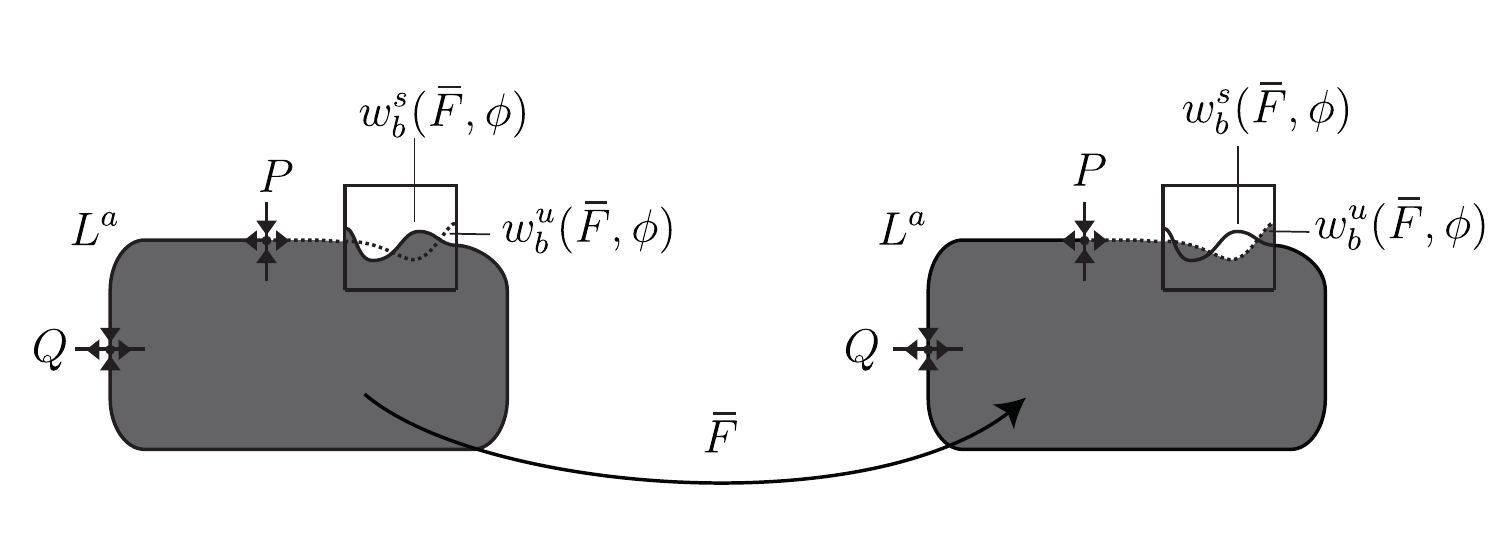}
\caption{the integral of the function  $M(\bar F, \phi)= w^u_b(\bar F,\phi)-w^s_b(\bar F,\phi)$ is null.}
\label{int:null}
\label{fig:oldcoord}
\end{figure}
 This means that $\int_{x_b}^{x_b+\tau} (w^u(\bar F, \phi_\psi^b)-w^s(\bar F, \phi_\psi^b)) dx=0$,
which proves the lemma (see Definition \ref{lbd} of the link-splitting function).
\end{proof}

Now, like in the previous Section, we restrict the class of perturbation functions $\psi$:
\[\psi(x)=\rho(x)\tilde\psi(x)\;,\]
where $\tilde \psi$ is a $\tau$-periodic function with the zero mean, and $\rho\in C^\infty(\R, [0,1])$ has support in $[x_b+\delta, x_b+2\tau-\delta]$ for $\delta>0$ small, 
and satisfies $\rho(x)+\rho(x+\tau)=1$ for every $x\in [x_b, x_b+\tau]$. Then, if the link $L_a$ persists for the map $F$, the operator
\[\mathcal M^b_\rho : \tilde \psi \mapsto \mathcal M^b (\rho\tilde\psi)\;,\]
takes a small ball around zero in the space $C^{k-2}_0(\R/(\tau \Z) ,\R)$ of $\tau$-periodic functions with the zero mean
into the same space (by Lemma \ref{zeromean}).

By Lemma \ref{prepropbililinksimple1}, the operator $\mathcal M^b_\rho$ is of class $C^1$ on $C^{k-2}_0(\R/(\tau \Z),\R)$.
It follows easily from (\ref{phiformb}) that if $F=\mathring F$, then
\begin{equation}\label{phiformbb}
\mathcal M^b_\rho(\tilde\psi)(x) = \tilde\psi(x) -\;\frac12\; (\tilde\psi(\frac{3x_b+\tau-x}2) + \tilde\psi(\frac{3x_b+2\tau-x}2)).
\end{equation}

Note that the space $C^{k-2}_0(\R/(\tau \Z) ,\R)$ is a Banach space when endowed with the following norm:
\begin{equation}\label{0norm}
\|\psi\|_{C^{k-2}_0}= \max_{1\le i\le k-2} \|D^i \psi_1-D^i \phi_2\|_{C^0}\;.
\end{equation}
This norm includes evaluation of the derivatives only (still it is a well-defined norm - if two functions with the zero mean have the
same derivative, they coincide).

By (\ref{phiformb}) and by the continuous dependence of $\mathcal M^b$ on $F$ (see Lemma \ref{prepropbililinksimple1}), the operator
$id-\mathcal M^b_\rho$ is $C^1$-close to the linear operator $\tilde \psi \mapsto \bar\psi$ where
\[\bar\psi(x)=\frac12\; (\tilde\psi(\frac{3x_b+\tau-x}2) + \tilde\psi(\frac{3x_b+2\tau-x}2)).\]
We have
\[D^i\bar\psi(x)=\frac{(-1)^i}{2^{i+1}}\; (D^i\tilde\psi(\frac{3x_b+\tau-x}2) + D^i\tilde\psi(\frac{3x_b+2\tau-x}2)),\]
so this operator is, obviously, a contraction in the norm (\ref{0norm}). Therefore, $id-\mathcal M^b_\rho$ is a contraction on
$C^{k-2}_0(\R/(\tau \Z) ,\R)$ for all $F$ which are $C^k$-close to $\mathring F$, provided $F$ is area-preserving and the link $L_a$ persists for $F$.

Thus, $id-\mathcal M^b_\rho$ has a fixed point $\tilde \psi$. The corresponding function 
$\psi=\rho\tilde\psi$ solves equation (\ref{bsmeb}); Lemma \ref{propbilinksimple2} is proven.$\qed$

\subsection{Proof of Proposition \ref{rtoinf}}\label{proofrtoinf}

The previous results allow for construction of $C^r$-maps with stochastic islands for any finite $r$. Below we prove
Proposition \ref{rtoinf} which deals with the $C^\infty$ case. 
Let $\hat f\in {\rm Diff}^r_\omega(\M)$ have a stochatics island $\mathcal I$ bounded by bilinks $(L^a_i\cup L_i^b)_{i=1}^m$ so that 
each bilink $C_i:=L^a_i\cup L_i^b$ is a $C^r$-smooth circle (without break points).
Let us show that arbitrarily close in $C^r$ to $\hat f$ there exists a map $\hat f_\infty\in {\rm Diff}^\infty_\omega(\M)$ for which the bilinks persist.

Choose a map $f\in {\rm Diff}^\infty_\omega(\R^2)$ which is sufficiently close in $C^r$ to $\hat f$; such 
exists by Zehnder smoothing theorem \cite[Thm.~1]{Ze77}. The bilinks do not need to persist for $f$. To restore them, the idea is to smoothen the circles  $C_i$ to $C^r$-close circles $ \tilde C_i$ which are of class $C^\infty$. Then we will perform a local surgery to construct 
$\hat f_\infty$ of class $C^\infty_\omega$  which is $C^r$-close to $f$ and such that 
$\hat f_\infty(\sqcup_i \tilde C_i)=\sqcup_i \tilde C_i$. 
By local maximality of the hyperbolic continuation $(\tilde P_i,\tilde Q_i)_i$  of saddle points $(P_i,Q_i)_i$ defining the bilinks
$(L^a_i\cup L_i^b)_{i=1}^m$, it follows that each $\tilde P_i,\tilde Q_i$ belongs to $\tilde C_i$. Moreover, as the unique invariant curves which contain $\tilde P_i$ or 
$\tilde Q_i$ are their local stable and unstable manifolds, we obtain that the circles $\tilde C_I$ are heteroclinic bi-links, i.e., the bilinks are persistent for $\hat f_\infty$. 

Consequently, we need only to prove the following
\begin{lemm}\label{restorCinfty}
There exists a collection of $C^\infty$-circles $\tilde C_i$ 
which are $C^r$-close to $C_i$, and a map $\hat  f_\infty$ of class $C^\infty_\omega$ which is $C^r$-close to $f$, such that  
$\hat f_\infty(\sqcup_i \tilde C_i)= \sqcup_i \tilde C_i$.
\end{lemm}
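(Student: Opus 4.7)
My plan is to first smoothen each bilink $C_i$ to a $C^\infty$-embedded circle $\tilde C_i$ that is $C^r$-close to $C_i$ (by mollification in a $C^\infty$-chart, noting that $C_i$ is by hypothesis already $C^r$-smooth without break points), and then to construct $\hat f_\infty$ as a post-composition $\hat f_\infty = \Psi \circ f$, where $\Psi$ is a $C^\infty$-symplectic perturbation of the identity supported in pairwise disjoint tubular neighborhoods of the $\tilde C_i$, whose role is to push the image $f(\tilde C_i)$ back onto $\tilde C_i$. Since the $C_i$ are pairwise disjoint, so are the $\tilde C_i$ for a sufficiently close $C^r$-approximation, and Weinstein's Lagrangian neighborhood theorem then furnishes disjoint symplectic tubular neighborhoods $U_i$ of $\tilde C_i$ equipped with coordinates $(t,h) \in (\R/\Z) \times [-\eta_i, \eta_i]$ in which $\omega = dt \wedge dh$ and $\tilde C_i = \{h = 0\}$.

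In these coordinates, $f(\tilde C_i)$ is a $C^\infty$-embedded curve $C^r$-close to $\tilde C_i$, hence the graph $\{h = g_i(t)\}$ of a $C^\infty$, $C^r$-small, periodic function $g_i$. A crucial observation is that $\int_0^1 g_i(t)\,dt = 0$: since $f$ is area-preserving, the disc bounded by $f(\tilde C_i)$ has the same $\omega$-area as the disc bounded by $\tilde C_i$, so the signed area of their symmetric difference (which, by $C^0$-closeness, lies entirely inside $U_i$) vanishes, and in the Weinstein chart this signed area is exactly $\int g_i$.

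Given this mean-zero property, I build $\Psi_i$ by a Moser-type argument applied to the linear isotopy of curves $\gamma_s := \{h = s\, g_i(t)\}$, $s \in [0,1]$. Let $\Phi_i(t) := \int_0^t g_i$, which is a periodic $C^\infty$ function precisely because $\int g_i = 0$. Fix a bump $\rho \in C^\infty(\R, [0,1])$ with $\rho(0) = 1$, $\rho'(0) = 0$, and $\supp \rho \subset (-\eta_i/2, \eta_i/2)$, and set the time-dependent Hamiltonian $H_s(t,h) := -\Phi_i(t)\, \rho(h - s\, g_i(t))$. Using $\rho(0)=1$ and $\rho'(0)=0$, a short calculation shows that the Hamiltonian vector field of $H_s$ equals $(0, g_i(t))$ on $\gamma_s$, so its time-$1$ flow $\Xi_i$ is a $C^\infty$-symplectomorphism of $U_i$ sending $\{h = 0\}$ onto $\{h = g_i(t)\}$. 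Setting $\Psi_i := \Xi_i^{-1}$ yields a $C^\infty$-symplectomorphism supported in $U_i$ and $C^r$-close to the identity (since $\|H_s\|_{C^r} = O(\|g_i\|_{C^r})$) with $\Psi_i(f(\tilde C_i)) = \tilde C_i$.

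Defining $\hat f_\infty := \Psi_m \circ \cdots \circ \Psi_1 \circ f$, the pairwise disjointness of the supports makes $\hat f_\infty$ a $C^\infty_\omega$-diffeomorphism which is $C^r$-close to $f$ (hence to $\hat f$), and by construction $\hat f_\infty(\tilde C_i) = \tilde C_i$ for every $i$. The \emph{main obstacle} I anticipate is the mean-zero identity $\int g_i = 0$: this is exactly what permits the Moser-type construction of $\Psi_i$ as a compactly supported symplectomorphism, and it rests on a global area computation combining the $C^0$-closeness of $f(\tilde C_i)$ to $\tilde C_i$ with the area-preservation of $f$.
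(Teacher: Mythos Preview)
Your approach is correct and runs parallel to the paper's: both smoothen the $C_i$ to $C^\infty$ circles $\tilde C_i$, work in symplectic tubular neighborhoods of the $\tilde C_i$, exploit the mean-zero identity $\int g_i=0$ (from area preservation of $f$), and build a symplectic correction $\Psi$ with $\Psi(f(\tilde C_i))=\tilde C_i$. The genuine difference is in how $\Psi$ is produced. The paper takes the global vertical shear $(t,h)\mapsto(t,h-\sigma(t))$ in the tubular chart (not compactly supported) and then invokes a Dacorogna--Moser/Avila extension result to globalize it to a $C^\infty_\omega$-diffeomorphism of $\R^2$; the volume-matching hypothesis of that extension is exactly where $\int\sigma=0$ enters. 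You instead construct $\Psi$ directly as the time-$1$ map of a compactly supported Hamiltonian, using $\int g_i=0$ to make the primitive $\Phi_i$ periodic. Your route is more self-contained (no extension black box), the paper's is slightly more elementary once the extension lemma is granted.

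One technical wrinkle in your write-up: from ``$\|H_s\|_{C^r}=O(\|g_i\|_{C^r})$'' you only get $\|X_{H_s}\|_{C^{r-1}}$ small, hence the time-$1$ flow is $C^{r-1}$-close to the identity, not $C^r$-close. Bounding $\|X_{H_s}\|_{C^r}$ requires $\|H_s\|_{C^{r+1}}$, and because your $H_s$ carries $g_i$ inside the argument of $\rho$, the term $\Phi_i\cdot\partial_t^{r+1}\bigl[\rho(h-sg_i)\bigr]$ brings in $g_i^{(r+1)}$, which you do not control. The fix is immediate: replace your time-dependent Hamiltonian by the autonomous $H(t,h)=-\Phi_i(t)\,\chi(h)$, where $\chi\in C^\infty$ is a fixed cut-off with $\chi\equiv 1$ on $|h|\le\eta_i/3$ and $\supp\chi\subset(-\eta_i,\eta_i)$. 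Then $X_H=\bigl(-\Phi_i(t)\chi'(h),\,g_i(t)\chi(h)\bigr)$ satisfies $\|X_H\|_{C^r}\le C(\chi)\,\|g_i\|_{C^r}$ (the two factors depend on separate variables, so Leibniz costs no extra $g_i$-derivative), and provided $\|g_i\|_{C^0}<\eta_i/3$ the flow equals $(t_0,h)\mapsto(t_0,h+sg_i(t_0))$ on the relevant region, so at $s=1$ it still sends $\{h=0\}$ exactly to $\{h=g_i\}$. With this modification your argument yields the required $C^r$-closeness without loss.
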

To prove this lemma, we show below
\begin{lemm}\label{restorCinfty2}
There exists $\epsilon>0$, so that for all $i$, there exist:
\begin{itemize}
\item  a $C^\infty$-circle $\tilde C_i$ which is $C^r$-close to $C_i$, such that $C_i$ and $\tilde C_i$ bound disks of the same area,
\item  a $C^\infty_\omega$ tubular neighborhood $N:=\sqcup_i \tilde C_i\times [-\epsilon,\epsilon]\to \R^2$  such that 
$N^{-1}(\hat f(\sqcup_i \tilde C_i))$ is a graph of a $C^r$-small section.
\end{itemize}
\end{lemm}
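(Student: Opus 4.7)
The plan is to mollify each $C^r$-smooth circle $C_i$ into a $C^\infty$-close approximation $\tilde C_i$ with matched enclosed area, then build a $C^\infty$-symplectic tubular neighborhood of their disjoint union, and finally use the closeness of $f$ to $\hat f$ to verify the graph property.

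\textbf{Smoothing with area correction.} Parametrize each $C_i$ by a $C^r$-embedding $\gamma_i:\S^1\to\R^2$ and convolve with a smooth periodic mollifier on $\S^1$ to obtain a $C^\infty$-embedding $\tilde\gamma_i^{(0)}$ whose image $\tilde C_i^{(0)}$ is $C^r$-close to $C_i$ (for a mollifier of sufficiently small support, the perturbation is $C^1$-small so the embedding property persists). The enclosed areas of $\tilde C_i^{(0)}$ and $C_i$ differ by an amount of order $\|\tilde\gamma_i^{(0)}-\gamma_i\|_{C^r}$; cancel this discrepancy by displacing $\tilde C_i^{(0)}$ along a $C^\infty$ unit normal vector field by a small constant whose sign and magnitude are fixed by the signed area defect (to leading order the area shifts by this constant times the length of $\tilde C_i^{(0)}$, so the equation for the constant is solvable). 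The resulting circle $\tilde C_i$ is of class $C^\infty$, remains $C^r$-close to $C_i$, and bounds a disk of the same area as $C_i$.

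\textbf{Symplectic tubular neighborhood.} Each smooth embedded curve $\tilde C_i\subset(\R^2,\omega)$ is automatically Lagrangian (top-dimensional in an isotropic sense on a two-dimensional symplectic manifold), so Weinstein's Lagrangian neighborhood theorem --- equivalently, the Moser trick applied to the pair consisting of $\omega$ and the standard form $ds\wedge dt$ pulled forward along an arbitrary smooth tubular chart --- produces an $\omega$-preserving $C^\infty$-embedding $N_i:\tilde C_i\times[-\epsilon_i,\epsilon_i]\to\R^2$ sending the zero section onto $\tilde C_i$ and pulling $\omega$ back to $ds\wedge dt$. Because the finitely many circles $\tilde C_i$ are pairwise disjoint and compact, the widths $\epsilon_i$ can be shrunk to a uniform $\epsilon>0$ so that the images $N_i(\tilde C_i\times[-\epsilon,\epsilon])$ remain pairwise disjoint; their disjoint union defines the required chart $N$.

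\textbf{Graph property.} Since each $C_i$ is invariant under $\hat f$ (as a union of stable/unstable manifold pieces of hyperbolic periodic points), while $f$ is $C^r$-close to $\hat f$ and $\tilde C_i$ is $C^r$-close to $C_i$, the image $f(\tilde C_i)$ is $C^r$-close to $\tilde C_i$. For $\epsilon$ fixed as above and sufficiently sharp approximations $f\approx\hat f$ and $\tilde C_i\approx C_i$, the curve $f(\tilde C_i)$ lies inside the image of $N_i$; by transversality of the fibers $\{s\}\times[-\epsilon,\epsilon]$ to the zero section, a $C^r$-small normal perturbation of $\tilde C_i$ is representable in the $N_i$-coordinates as a graph $\{t=\sigma_i(s)\}$ with $\sigma_i$ a $C^r$-small section. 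Assembling these sections over $i$ yields the graph property.

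\textbf{Main obstacle.} The delicate point is obtaining $N$ simultaneously of class $C^\infty$ \emph{and} $\omega$-preserving; this is what forces Step 1 to smooth the $C_i$ all the way to $C^\infty$ (so that the Moser--Weinstein construction outputs a $C^\infty$-chart) and to match enclosed areas, which will be needed in the subsequent proof of Lemma \ref{restorCinfty} to glue $f(\tilde C_i)$ back onto $\tilde C_i$ by a symplectomorphism supported inside the tubular neighborhood without introducing any area obstruction.
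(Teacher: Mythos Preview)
Your smoothing-plus-area-correction in Step 1 is fine, and the overall architecture is right. The gap is in Step 3, and the paper flags exactly this point: ``the statement of [Weinstein's] theorem does not imply that $N^{-1}(\hat f(\tilde C_i))$ is $C^r$-small when $\tilde C_i$ is close to $C_i$.''

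Here is the issue. To make the section $\sigma_i$ in the $N_i$-chart arbitrarily $C^r$-small you need $\tilde C_i$ arbitrarily $C^r$-close to $C_i$. But $C_i$ is only $C^r$, so as your mollification parameter shrinks, the higher derivatives of $\tilde C_i$ blow up. Any concrete Moser--Weinstein construction of $N_i$ differentiates $\tilde C_i$ at least once (normal field, Jacobian of the preliminary tubular chart, Moser vector field), so $\|N_i^{-1}\|_{C^r}$ typically blows up as well. The inequality you are implicitly using,
\[
\|\sigma_i\|_{C^r}\ \lesssim\ \|N_i^{-1}\|_{C^r}\cdot\|f(\tilde C_i)-\tilde C_i\|_{C^r},
\]
then has a right-hand side that is a product of a term going to $0$ and a term going to $\infty$, and you give no reason why the product goes to $0$. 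In short: the chart $N_i$ moves with $\tilde C_i$, and you have not established any uniform control on it.

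The paper's cure is to decouple the chart from the approximation. It first fixes one auxiliary $C^\infty$-circle $\mathring C_i$ near $C_i$ (via a smoothed defining submersion $\mathring\rho_i$) and builds, once and for all, a $C^\infty_\omega$ tubular chart $\mathring N_i$ around it using an explicit Hamiltonian flow. In this \emph{fixed} chart, $C_i$ is the graph of some $C^r$-section $\sigma_i$. One then takes $\tilde C_i$ to be the graph of a $C^\infty$-section $\tilde\sigma_i$ that is $C^r$-close to $\sigma_i$ (and with zero integral, to match areas), and defines $N_i$ as the vertical shift $N_i(x,h)=\mathring N_i(x,h+\tilde\sigma_i(x))$. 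Now $N_i$ is $C^\infty$, symplectic, and its $C^r$-norm is controlled by the fixed $\mathring N_i$ together with $\|\tilde\sigma_i\|_{C^r}$, which stays bounded as $\tilde\sigma_i\to\sigma_i$. With $N_i$ uniformly controlled, the $C^r$-smallness of $f(\tilde C_i)-\tilde C_i$ genuinely translates into $C^r$-smallness of the section.

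Your argument becomes correct if you insert this one extra layer: build a fixed $C^\infty_\omega$ tubular chart first, define $\tilde C_i$ \emph{inside} it as a smooth graph approximating the graph of $C_i$, and take $N_i$ to be the obvious vertical shift. What you cannot do is invoke Weinstein as a black box on each successive $\tilde C_i$ and expect the resulting charts to behave uniformly.
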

\begin{proof}[Proof of Lemma \ref{restorCinfty}] By Lemma \ref{restorCinfty2}, the collection of circles $N^{-1}(f(\sqcup_i \tilde C_i))$ is
given by the equation $h=\sigma(x,i)$ where $x$ and $h$ are coordinates in $\tilde C_i$ and, respectively, $[-\epsilon,\epsilon]$, and $\sigma$
is a $C^r$-small function of class $C^\infty$. Denote $\phi : (x,h)\mapsto (x,h- \sigma(x))$; this map is of class $C^\infty$ and $C^r$-close to the identity.

Observe that  $V_i:= N(\tilde C_i\times [-\epsilon/2,\epsilon/2])$ is a neighborhood of $\tilde C_i$ which is  bounded by two smooth circles close to $N(\tilde C_i\times \{-\epsilon/2,\epsilon/2\})$. 
Both $V_i$ and $N(\tilde C_i\times [-\epsilon/2,\epsilon/2])$ bound a disk of the same volume. 
Furthermore, $\psi:= N\circ \phi\circ N^{-1}|_{V_i}$ is of class $C^\infty$ and $C^r$-close to the canonical inclusion of $\sqcup_i V_i\hookrightarrow \R^2$. Consequently, by Corollary \ref{coroAvila}, we can extend $\psi$ to a $C^\infty_\omega$-diffeomorphism of $\R^2$. 
We observe that $\psi$ sends $ \sqcup_i f(\tilde C_i)$  to $\sqcup_i \tilde C_i$. Thus $\hat f_\infty:= \psi \circ f$ satisfies the requirements of the lemma.\end{proof}

\begin{proof}[Proof of Lemma \ref{restorCinfty2}]
 The existence of a sympletic tubular neighborhood is given by \cite{We71}, however, 
  the statement of this theorem does not imply that $N^{-1}(\hat f(\tilde C_i))$ is $C^r$-small when $\tilde C_i$ is close to $C_i$. 
 Hence we need to develop  this theorem for our particular case. 

Let $\rho_i$ be a $C^r$-submersion from a neighborhood of $C_i$ into $\R$, so that $C_i=\rho_i^{-1}(\{0\})$. 

 Let $\mathring \rho_i$ be of class $C^\infty$ and $C^r$-close to $\rho_i$. Let $\mathring C_i$ be the circle $\mathring \rho_i^{-1}(\{0\})$.  By adding, if necessary, a small constant to $\mathring \rho_i$, we can achieve that $\mathring C_i$ and $C_i$ bound disks of the same area. 
  Let $L_h$ be the length of the circle $\mathring \rho_i^{-1}(\{h\})$. Let $H$ be such that $\nabla_zH :=  L_{\tilde \rho_i(h)} \nabla \mathring \rho_i/\|\nabla \mathring \rho_i\|$.  We observe that the Hamitonian flow $\phi^t$ of  $H$ leaves $\mathring \rho_i$ invariant. Also, all the orbits close to 
$\mathring C_i$ are periodic with period 1. Hence, the obits of $H$ define a trivial $C^\infty$-fibration by circles, and $\phi^t$ defines a $C^{\infty}_\omega$-conservative diffeomorphism $\mathring N_i$ from $\mathring C_i\times [-\epsilon,\epsilon]$ onto a neighborhood $V_i$ of $C_i$. 
In this chart, $\mathring N_i^{-1}\mathring C_i$ is the graph of a section $\sigma_i$ of class $C^r$, with zero integral, but a priori only $C^{r-1}$-small. 

Take a $C^\infty$-circle $\tilde C_i$, which bounds a disk of the same volume as $C_i$ and is $C^r$-close to $C_i$. The circle
$\tilde C_i$ is the image by $N_i$ of the graph of a section $\tilde \sigma_i$ which is $C^r$-close to $\sigma_i$ and has zero integral. We endow $\tilde C_i$ with the tubular neighborhood:
\[N_i(x,  h)\in \tilde C_i\times [-\epsilon,\epsilon]\approx \mathring C_i\times [-\epsilon,\epsilon] \mapsto \mathring N_i(x, h-\sigma_i(x))\]
As $f(\sqcup_i\tilde C_i)$ is $C^r$-close to $\sqcup\tilde C_i$, it is the graph of a $C^r$-small section in this tubular neighborhood. Now note that the tubular neighborhood $N$ of $\sqcup_i \tilde C_i$ whose restriction to $\tilde C_i\times [-\epsilon, \epsilon]$ is $N_i$ satisfies the required properties. 
%
%
%
%
%
%
\end{proof}

\section{Rescaling Lemma}\label{section5}
Consider a symplectic $C^{r+1}$-diffeomorphism $f$ of a two-dimensional unit disc
$\D$ into $\R^2$, $r\geq 1$. Let $f$ have a saddle periodic point $O$. Assume that there exists a {\em homoclinic band}, 
i.e. the intersection of the stable and unstable manifolds of $O$ contains a non-empty open interval $J$. 

Every orbit starting at $cl(J)$ is homoclinic, i.e., it tends to the orbit of $O$ both at forward and backward iterations of $f$, 
so the closure $C$ of the set of these orbits is this set itself and the orbit of $O$. Let $\mathcal U$ be a small neighborhood of $C$.

The following statement is a different\footnote{and more developed, since it includes many rounds near a homoclinic tangency, see also Remark \ref{important remark}.} version of the 
rescaling lemma from \cite{GST07}.

\noindent{\bf Rescaling lemma.} {\em Take any odd natural $N$ and any $\varepsilon>0$. There exist $N$ 
area-preserving diffeomorphisms $\Phi_{i}:\R^2\to \R^2$, $\varepsilon$-close to identity in the $C^r$-norm, and a $C^r$-smooth diffeomorphism $Q:\R^2\to \R^2$ 
with a constant Jacobian, such that $Q(\D) \subset U$, for which the following holds. 
For every sequence of $C^r$-smooth functions $\psi_i: \R^1\to \R^1$,
$i=1,\dots,N$, and any, arbitrarily small $\delta>0$, $\nu>0$, there exist an area-preserving diffeomorphism 
$g: \R^2\to \R^2$, such that $(g-id)$ is bounded by $\delta$ in the $C^r$-norm and supported in a radius-$\nu$ disc in $\mathcal U$, 
and a positive integer $n$ such that the renormalization of the $n$-th iteration of the perturbed diffeomorphism $\hat f=g\circ f$ 
restricted to the disc $Q(\D)$ is the product of Henon-like maps
as given by the following formula
\begin{equation}\label{rescaling}
Q^{-1} \circ \hat f^n \circ Q|_{\D} = H_{\psi_N}\circ \Phi_N\circ \dots \circ H_{\psi_1} \circ \Phi_1,
\end{equation}
where the symplectic diffeomorphisms $H_{\psi_i}: (x,y)\mapsto (\bar x, \bar y)$ are given by
\begin{equation}\label{henonpsi}
\bar x=y, \qquad \bar y = -x +\psi_i(y).
\end{equation}}
\begin{rema}\label{important remark}
It is important for us that the maps $\Phi_i$ in formula (\ref{rescaling}) are independent of the choice of the functions $\psi_1,\dots, \psi_N$.\end{rema} 
In particular, the Rescaling Lemma implies Proposition \ref{proprescalling}, which we formulate in a more detailed fashion as follows:

\begin{coro}\label{qq} Take any symplectic $C^r$-diffeomorphism $F: {\D} \to \R^2$. For any $\varepsilon>0$ there exists a $C^r$-smooth diffeomorphism 
$Q:\R^2\to \R^2$ with a constant Jacobian, such that $Q(\D) \subset U$, and a symplectic diffeomorphism $\hat F$, which is $\varepsilon$-close to $F$
in the $C^r$-norm, for which the following holds. For every $C^r$-smooth function $\Psi: \R^1\to \R^1$ 
and any, arbitrarily small $\delta>0$, $\nu>0$, there exist an area-preserving diffeomorphism $g: \R^2\to \R^2$, such that $g-id$ is bounded by $\delta$ in the $C^r$-norm 
and supported in a radius-$\nu$ disc in $\mathcal U$, and a positive integer $n$ such that the renormalization of the $n$-th iteration of the perturbed diffeomorphism 
$\hat f=g\circ f$ restricted to the disc $Q(\D)$ is given by
\begin{equation}\label{rescalingpsi}
Q^{-1}\circ \hat f^n\circ Q|_{\D} = S_{\psi}\circ \hat F,
\end{equation}
where the symplectic diffeomorphism $S_\psi: (x,y)\mapsto (\bar x, \bar y)$ is given by
\begin{equation}\label{ipsi}
\bar x=x, \qquad \bar y = y +\psi(x).
\end{equation}
\end{coro}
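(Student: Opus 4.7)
The plan is to combine the Rescaling Lemma with the polynomial (H\'enon-like) approximation theorem of Turaev \cite{Tu03} (see also \cite{T15}), and then exploit the algebraic identity $S_\psi\circ H_\phi=H_{\phi+\psi}$ to absorb the functional parameter $\psi$ into the last H\'enon factor of the rescaled iterate.

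First, by the density of products of symplectic H\'enon-like maps in ${\rm Diff}^r_\omega(\D,\R^2)$ established in \cite{Tu03}, for any given $\varepsilon>0$ one may choose an odd integer $N$ (padding with trivial factors if needed to make $N$ odd) and $C^r$-smooth functions $\bar\psi_1,\dots,\bar\psi_N:\R\to\R$ such that the pure product
\[ G_0 := H_{\bar\psi_N}\circ H_{\bar\psi_{N-1}}\circ\cdots\circ H_{\bar\psi_1} \]
is within $\varepsilon/2$ of $F$ in $C^r(\D)$.

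Next, feed this $N$ together with a parameter $\varepsilon'>0$ (to be fixed below) into the Rescaling Lemma. This yields the diffeomorphism $Q$ with constant Jacobian and $Q(\D)\subset\mathcal U$, together with $N$ symplectic diffeomorphisms $\Phi_1,\dots,\Phi_N$, each $\varepsilon'$-close to the identity in $C^r$. Define
\[ \hat F := H_{\bar\psi_N}\circ\Phi_N\circ H_{\bar\psi_{N-1}}\circ\Phi_{N-1}\circ\cdots\circ H_{\bar\psi_1}\circ\Phi_1. \]
Since the functions $\bar\psi_i$ have already been fixed, the $C^r$-norms of the factors $H_{\bar\psi_i}$ are controlled, so taking $\varepsilon'$ small enough (depending on the Lipschitz data of those factors) we guarantee $\|\hat F-G_0\|_{C^r}<\varepsilon/2$ and hence $\|\hat F-F\|_{C^r}<\varepsilon$. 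Note that $\hat F$ is manifestly symplectic as a composition of symplectic maps.

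Now, given any $C^r$-smooth $\psi:\R\to\R$, a direct computation gives
\[ S_\psi\circ H_\phi(x,y) = S_\psi(y,-x+\phi(y)) = (y,-x+\phi(y)+\psi(y)) = H_{\phi+\psi}(x,y), \]
so in particular $S_\psi\circ H_{\bar\psi_N}=H_{\bar\psi_N+\psi}$, and therefore
\[ S_\psi\circ\hat F = H_{\bar\psi_N+\psi}\circ\Phi_N\circ H_{\bar\psi_{N-1}}\circ\Phi_{N-1}\circ\cdots\circ H_{\bar\psi_1}\circ\Phi_1. \]
This is precisely the right-hand side of the Rescaling Lemma's formula (\ref{rescaling}) with the choice $\psi_i=\bar\psi_i$ for $i<N$ and $\psi_N=\bar\psi_N+\psi$. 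By Remark \ref{important remark}, the $\Phi_i$ and $Q$ produced by the Rescaling Lemma are independent of the choice of the $\psi_i$'s, so for each $\psi$ (and each prescribed $\delta,\nu>0$) the lemma supplies a $C^r$-small perturbation $g$ supported in a $\nu$-disc inside $\mathcal U$, and an integer $n$, with
\[ Q^{-1}\circ(g\circ f)^n\circ Q|_{\D} = S_\psi\circ\hat F, \]
which is (\ref{rescalingpsi}).

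The only non-trivial point is bookkeeping: one must fix the approximants $\bar\psi_i$ of $F$ \emph{before} invoking the Rescaling Lemma so that $\varepsilon'$ (hence the closeness of the $\Phi_i$ to identity) may be chosen in terms of the already-known $C^r$-size of the H\'enon factors $H_{\bar\psi_i}$, while insisting that the same $Q$ and $\Phi_i$ serve for \emph{all} functional perturbations $\psi$. This is exactly what Remark \ref{important remark} guarantees, and it is the reason the conclusion of Corollary~\ref{qq} can be phrased with a single $\hat F$ (independent of $\psi$) and all the $\psi$-dependence hidden in $g$ and $n$.
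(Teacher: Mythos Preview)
Your proof is correct and follows essentially the same strategy as the paper: approximate $F$ by a product of H\'enon maps via \cite{Tu03}, invoke the Rescaling Lemma, and use the independence of the $\Phi_i$ from the functions $\psi_i$ (Remark~\ref{important remark}) to absorb the free parameter $\psi$ into the last H\'enon factor.

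The only difference is in the algebraic bookkeeping. The paper uses the identity $S_\psi=H_\psi\circ{\cal R}$ (with ${\cal R}^{-1}=H_0$) and therefore appends three extra factors $H_0,H_0,H_\psi$ to the approximating product, yielding $N=N'+3$ in the Rescaling Lemma. Your identity $S_\psi\circ H_\phi=H_{\phi+\psi}$ is cleaner: it lets you modify the existing last factor in place, with no padding for the sake of $\psi$. One small caveat: your phrase ``padding with trivial factors if needed to make $N$ odd'' deserves a word of justification, since no single H\'enon map (and no odd power of $H_0$) equals the identity. The fix is exactly the paper's: approximate ${\cal R}\circ F$ rather than $F$ by an even product and then prepend $H_0={\cal R}^{-1}$, giving an odd-length product approximating $F$.
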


\begin{proof} By Theorem 2 of \cite{Tu03}, every symplectic $C^r$-diffeomorphism of any two-dimensional disc to
$\R^2$ can be arbitrarily well approximated by a composition of an {\em even} number of maps of the form (\ref{henonpsi}).
In particular, for the given $C^r$-diffeomorphism $F$, there exists an even number $N'$ of the functions $\psi_1,\dots,\psi_{N'}$
such that
$$\|{\cal R}\circ F -H_{\psi_{N'}}\circ \dots \circ H_{\psi_1}\|_{C^r({\D},\R^2)} <\frac{\varepsilon}{2},$$
where ${\cal R}$ is the linear rotation $(x,y)\mapsto (-y,x)$. Since ${\cal R}^{-1}=H_{\psi=0}$ is the map
of the form (\ref{henonpsi}), it follows that 
\begin{equation}\label{aprx}
\|F - H_{\psi_{N'+1}}\circ H_{\psi_{N'}}\circ \dots \circ H_{\psi_1}\|_{C^r({\D},\R^2)} <\frac{\varepsilon}{2},
\end{equation}
where $\psi_{N'+1}\equiv 0$. Since the map $S_\psi$ from (\ref{ipsi}) satisfies $S_\psi=H_\psi\circ {\cal R}^{-1}$, by applying
Rescaling Lemma with the sequence of functions $\psi_1, \dots, \psi_{N'}, \psi_{N'+1}\equiv 0, \psi_{N'+2}\equiv 0, 
\psi_{N'+3}=\psi$, we find that the perturbed map $\hat f$ can be constructed such that (\ref{rescalingpsi}) is fulfilled with
$$\hat F={\cal R}\circ \Phi_{N'+3}\circ {\cal R}^{-1} \circ \Phi_{N'+2}\circ H_{\psi_{N'+1}}\circ\Phi_{N'+1} \dots \circ H_{\psi_1} \circ \Phi_1$$
$$S_\psi \circ \hat F=H_\psi \circ \Phi_{N'+3}\circ {\cal R}^{-1} \circ \Phi_{N'+2}\circ H_{\psi_{N'+1}}\circ\Phi_{N'+1} \dots \circ H_{\psi_1} \circ \Phi_1\; .$$
(we can use the Rescaling Lemma because the number $N=N'+3$ of the functions $\psi_i$ is odd here). 

Since the maps
$\Phi_i$ can be made as close to identity as we want, we can make
$$\|\hat F - H_{\psi_{N'+1}}\circ H_{\psi_{N'}}\circ \dots \circ H_{\psi_1}\|_{C^r({\D},\R^2)} <\frac{\varepsilon}{2},$$
which implies, by (\ref{aprx}), that $\|\hat F - F\|_{C^r({\D},\R^2)} <\varepsilon$, as required. Since the maps $\Phi_i$ are
independent of the choice of $\psi$, it follows that $\hat F$ is independent of the function $\psi$ either.
\end{proof}

In the following Sections we complete the proof of the main theorem by proving the Rescaling Lemma.

\subsection{Local behavior near a nonlinear saddle.} Let $U_0$ be a sufficiently small neighborhood of $O$. 
Let $s$ be the period of the saddle point $O$; denote:
\[T_0=f^s|_{U_0}\; .\] 
It is straightforward to show (see Section 2.1 in \cite{GST07}) that one can introduce
symplectic $C^{r+1}$-coordinates $(x,y)$ in $U_0$ such that the local stable and unstable manifolds $W^s_{loc}(O)$ 
and $W^u_{loc}(O)$ in $U_0$ get straightened (i.e., they acquire equations $y=0$ and $x=0$, respectively) and the restrictions
of $T_0$ onto $W^s_{loc}(O)$  and $W^u_{loc}(O)$ become linear. This means that the map $T_0:(x,y)\mapsto (\bar x, \bar y)$ takes the following form
\begin{equation}\label{t0form}
\bar x= \lambda x + p(x,y)x, \qquad   \bar y = \lambda^{-1} y + q(x,y)y,
\end{equation}
where $0< |\lambda| <1$ and
\begin{equation}\label{pq0}
p(x,0)=0, \qquad \qquad q(0,y)=0.
\end{equation}
It also follows that
\begin{equation}\label{pq1}
p(0,y)=0, \qquad q(x,0)=0.
\end{equation}
\begin{proof}[Proof of equalities (\ref{pq1})]
Since $T_0$ is symplectic, $\det (DT_0)\equiv 1$, so
$$(\lambda +\partial_x(p(x,y)x))(\lambda^{-1} + \partial y (q(x,y)y)) -\partial_y p(x,y) \partial_x q(x,y) xy\equiv 1,$$ 
and it is a trivial exercise to check that this identity and (\ref{pq0}) imply (\ref{pq1}).\end{proof}

Identities (\ref{pq0}), (\ref{pq1}) are important, because they imply nice uniform estimates for arbtrarily long iterations of $T_0$. 
Namely, the following result holds true:
\begin{lemm}[Lem. 7 in \cite{GST08}]
there exist $\alpha>0$ and a sequence of functions
$\xi_k$, $\eta_k$ from $[-\alpha,\alpha]^2\subset U_0$ into $\R^1$, such that for all $k$ large enough, we have:
\begin{equation}\label{xy0k}
T_0^k (\bar x,\bar y)= (x,y)\Leftrightarrow \left[ 
x=\lambda^k \bar x + \xi_k(\bar x,y), \qquad  \bar y=\lambda^k y + \eta_k(\bar x,y)\right]\; ,
\end{equation}

and the functions $\xi_k$ and $\eta_k$ are $C^k$-small:
\begin{equation}\label{xieta}
\|\xi_k, \eta_k\|_{C^r}=o(\lambda^k) \mbox{   as     }  k\to+\infty\; .
\end{equation}
 \end{lemm}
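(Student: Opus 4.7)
The plan is to recast the identity $T_0^k(\bar x,\bar y)=(x,y)$ as a Shilnikov-type two-point boundary-value problem in which $\bar x$ and $y$ are prescribed, while $\bar y=y_0$ and $x=x_k$ are to be recovered along an intermediate orbit $\{(x_j,y_j)\}_{j=0}^k$. By (\ref{pq0})--(\ref{pq1}), both $p$ and $q$ vanish on the two coordinate axes, so applying Hadamard's lemma twice gives $p(x,y)=xy\,\tilde p(x,y)$ and $q(x,y)=xy\,\tilde q(x,y)$ with $\tilde p,\tilde q$ bounded on $[-\alpha,\alpha]^2$ in a slightly weaker smoothness class. The map $T_0$ then takes the form
\[x_{j+1}=\lambda x_j\bigl(1+\lambda^{-1}x_jy_j\tilde p(x_j,y_j)\bigr),\qquad y_j=\lambda y_{j+1}\bigl(1+\lambda x_jy_j\tilde q(x_j,y_j)\bigr)^{-1},\]
which is a small perturbation of the linear saddle once the product $x_jy_j$ is shown to be small.

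First I would rescale by setting $u_j=\lambda^{-j}x_j$ and $v_j=\lambda^{j-k}y_j$, so that $x_jy_j=\lambda^k u_jv_j$ is uniformly of order $\lambda^k$ along any orbit staying in $[-\alpha,\alpha]^2$. The boundary conditions become $u_0=\bar x$ and $v_k=y$, and the recursion rewrites as
\[u_{j+1}=u_j\bigl(1+\lambda^{k-1}u_jv_j\tilde p_j\bigr),\qquad v_j=v_{j+1}\bigl(1+\lambda^{k+1}u_jv_j\tilde q_j\bigr)^{-1},\]
whose right-hand sides are $O(\lambda^{k-1})$-perturbations of the trivial recursions $u_{j+1}=u_j$, $v_j=v_{j+1}$. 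The operator that sends a sequence $(u_j,v_j)_{j=0}^k$ in a small sup-ball around $(\bar x,y)$ to this right-hand side is then a contraction for $k$ sufficiently large, and it has a unique fixed point depending continuously on $(\bar x,y)$. Reading off $x_k=\lambda^k u_k$ and $y_0=\lambda^k v_0$ and telescoping the per-step corrections gives $|u_k-\bar x|=O(k\lambda^{k-1})$ and $|v_0-y|=O(k\lambda^{k+1})$. Hence $\xi_k(\bar x,y)=\lambda^k(u_k-\bar x)$ and $\eta_k(\bar x,y)=\lambda^k(v_0-y)$ are both $o(\lambda^k)$ in $C^0$, since $k\lambda^{k-1}\to 0$ as $k\to\infty$.

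To upgrade to the $C^r$-bound (\ref{xieta}) I would rerun the fixed-point argument on the Banach space of $C^r$-smooth sections $(\bar x,y)\mapsto (u_j,v_j)_{j=0}^k$, equipped with the uniform $C^r$-norm. Each derivative in $(\bar x,y)$ applied to the recursion pulls out a product of derivatives of $\tilde p_j$, $\tilde q_j$ multiplied by factors of the form $\partial^\alpha u_i/\partial(\bar x,y)^\alpha$ and $\partial^\alpha v_i/\partial(\bar x,y)^\alpha$; the scaling factors $\lambda^j$ and $\lambda^{k-j}$ built into $u_j,v_j$ combine so that the prefactor $\lambda^{k-1}$ (respectively $\lambda^{k+1}$) per step is preserved regardless of differentiation order. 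The operator therefore remains a contraction on a small $C^r$-ball, its fixed point has $C^r$-norm $O(k\lambda^{k-1})$, and multiplication by the outer $\lambda^k$ yields $\|\xi_k,\eta_k\|_{C^r}=o(\lambda^k)$ as required.

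The main obstacle is the bookkeeping of derivatives in this last step: while the $C^0$ estimate follows transparently from the vanishing of $p,q$ on the axes, proving that all partial derivatives of the fixed point $(u_j,v_j)$ up to order $r$ remain uniformly bounded in $j$ requires an induction on the order of differentiation, together with a careful tracking of how the rescaling absorbs the factors $\lambda^j$, $\lambda^{k-j}$ that arise from the chain rule. The potential loss of two derivatives in the factorizations $p=xy\tilde p$, $q=xy\tilde q$ is sidestepped by phrasing all estimates directly in terms of derivatives of $p$ and $q$ evaluated on the orbit, which are controlled by the $C^r$-norm of $p,q$ on $[-\alpha,\alpha]^2$. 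Granted this, the implicit function theorem in $C^r$ supplies the functions $\xi_k,\eta_k$ with the stated asymptotics, completing the proof.
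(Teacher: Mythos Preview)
The paper does not prove this lemma; it is quoted as Lemma~7 of \cite{GST08} and used as a black box, so there is no proof here to compare your attempt against.

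Your outline is the standard Shilnikov boundary-value (``cross-form'') argument and is correct in spirit. One point should be tightened: the iteration you set up on sequences $(u_j,v_j)_{j=0}^k$ is \emph{not} a contraction in the sup-norm as written, since $\partial u'_{j+1}/\partial u_j = 1+O(\lambda^{k-1})$ is close to $1$, not small. The standard fix is to pass to the integrated form
\[
u_j=\bar x+\sum_{i<j}\lambda^{k-1}u_i^2 v_i\,\tilde p_i,\qquad v_j=y+\sum_{i\ge j}(\cdots),
\]
and treat the right-hand side as the fixed-point operator; then each summand is $O(\lambda^{k-1})$, there are at most $k$ of them, and the Lipschitz constant is $O(k\lambda^{k-1})\to 0$. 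This is the ``telescoping'' you mention, but it must be the framework for the contraction, not a post-hoc estimate. With that adjustment the $C^0$ bound $\xi_k,\eta_k=O(k\lambda^{2k-1})=o(\lambda^k)$ is immediate, and your $C^r$ upgrade---differentiating the integrated fixed-point equation and using that each derivative of the summands retains the $\lambda^{k-1}$ prefactor---goes through. Your remark that the two-derivative loss from the Hadamard factorisation can be avoided by estimating $\partial^\alpha p,\partial^\alpha q$ directly on the orbit is also correct and is how the argument in \cite{GST08} is organised.
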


This Lemma means that arbitrarily long iterations of $T_0$ are well approximated by the iteratons of
its linearization.  Note that our perturbation $g$ will be supported outside of $f(U_0)$, so formulas (\ref{xy0k})
will stay valid for the perturbed map $\hat f=g\circ f$ with the same $\xi_k$, $\eta_k$, and $\lambda$.

\subsection{Formulas for the iterations near the homoclinic band}

As all the points of the homoclinic band $J$ are homoclinic,
by taking it possibly smaller, we may assume it included in a fundamental domain and so that $J\cap f^m(J)\not = \varnothing$, for every $m\not = 0$. Furthermore,  some forward iteration $J^+$ of $J$ lies in $W^s_{loc}(0):= [-\alpha,\alpha]\times\{0\}$
and a backward iteration $J^-$ lies in $W^u_{loc}(0):=\{0\}\times [-\alpha,\alpha]$:
\[ J^-\subset W^u_{loc}(O)= \{0\}\times (-\alpha,\alpha)\quad \text{and} \quad J^+ \subset W^s_{loc}(O)= (-\alpha,\alpha)\times \{0\}\; .\]
 Let $m>0$ be such that $f^m(J^-)=J^+$. 
 Choose $N$ different points
$M_i^-:=:(0,y_i^-)\in J^-$ and put  $M_i^+:=(x_i^+,0):=f^m M_i^-$. 

Let $U_1$ be a small neighborhood of $\{M_i^-: 1\le i\le N\}$, so that $U_1\subset (-\alpha,\alpha)^2\subset U_0$ and denote as $T_1$ the restriction of $f^m$ to $U_1$:
\[T_1:= f^m|_{U_1}\; .\]


 The perturbation $g$ will be supported in small neighborhoods of the points $M_i^+$, so it can
be supported in a disc of any small radius $\nu$, as claimed, provided the points $M_i^-$ are chosen 
sufficiently close to each other. 

 As $J^-\subset \{0\}\times (-\alpha,\alpha)$ is 
sent to $J^+\subset (-\alpha,\alpha)\times \{0\}$, the map $T_1: (x,y)\mapsto (\bar x,\bar y)$ near the point 
$M_i^-=(0,y_i^-)$ can be written in the form
\begin{equation}\label{t1map}
\bar x = x_i^+ + b_i (y-y_i^-) + \varphi_{1\, i}(x,y-y_i^-), \qquad \bar y = c_ix +\varphi_{2\, i}(x,y-y_i^-),
\end{equation}
where the $C^{r+1}$-functions $\varphi_{1\, i}$, $\varphi_{2\, i}$ satisfy
\begin{equation}\label{phi12}
\varphi_{1\, i}(0) = \partial_y \varphi_{1\, i}(0)=0\quad 
\varphi_{2\, i}(0) = \partial_x \varphi_{2\, i}(0)= \partial_y \varphi_{2\, i}(0)=0
\end{equation}
\begin{equation}\label{iprp2}
\left\|\frac{\varphi_{2\, i}(X,Y)}X\right\|_{C^r}=O(1).
\end{equation}



Note also that the area-preserving property of $T_1$ implies that the coefficients $b_i$, $c_i$ in (\ref{t1map}) satisfy
\begin{equation}\label{bc1}
b_ic_i = - \det (DT_1(M_i^-))=-1.
\end{equation}

\subsection{Scaling transformation}
We will further assume that the indices $i$ are defined modulo $N$, i.e. hereafter $i+1=1$ if $i=N$ and $i-1=N$  if $i=1$.
We will need a sequence of positive real numbers $R_i$, $i=1,\dots N$, that satisfy
\begin{equation}\label{rc}
R_1=1, R_{i+1}=-c_{i+1}b_i R_{i-1}.
\end{equation}
Such sequence indeed exists when $N$ is odd: we define $R_i$ by (\ref{rc}) inductively $R_1,$ $R_3,$ $\dots,$ $R_N,$ $R_2$ $\dots,$ $ R_{N-1}$ until we arrive to $R_{N+1}=(-1)^N \prod_{i=1}^N c_i \prod_{i=1}^N b_i$ and notice that the constraint $R_{N+1}=R_1$
is satisfied by virtue of (\ref{bc1}).

Let us define affine rescaling coordinates:
\[ \bar Q_i: (\bar X_i,\bar Y_i)\mapsto (\bar x, \bar y)\quad \text{and}\quad 
 Q_i: (X_i,Y_i)\mapsto ( x, y)\; .\]
 
 \begin{figure}[h!]
	\centering
		\includegraphics[width=9cm]{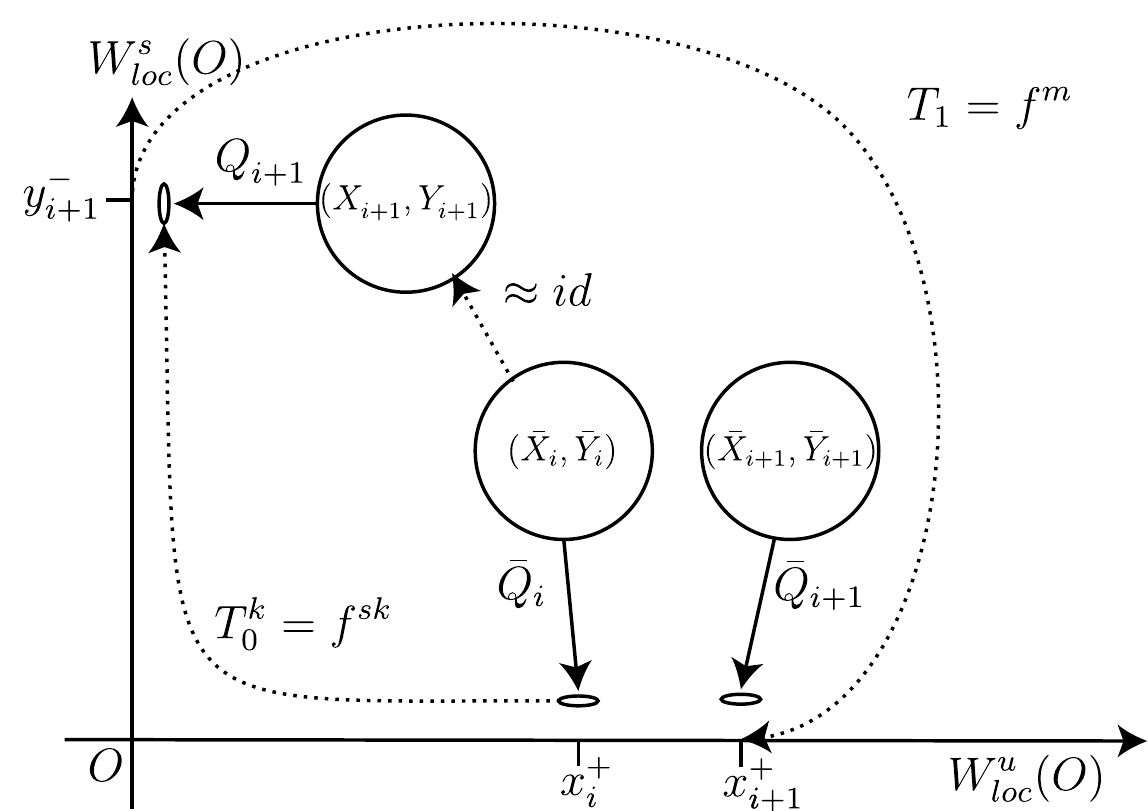}
\caption{Rescaling coordinates}\label{fig:island}
\label{fig:oldcoord}
\end{figure}

In order to define $\bar Q_i$ and $Q_i$ we chose a scaling constant $\mu<1$ such that:
\begin{equation}\label{lamu}
|\lambda|<\mu^r<1;
\end{equation}
Furthermore, we assume the the integer $k$ large.  The affine rescaling coordinates $\bar Q_i$ and $Q_i$ are defined by the rules:

\begin{equation}\label{resc}
\bar x=x_i^+ + b_i R_{i-1}\mu^k \bar X_i, \qquad \bar y=\lambda^k(y_{i+1}^- + \gamma_{i\, k}+R_i \mu^k \bar Y_i)
\end{equation}
\begin{equation}\label{resc-}
x=\lambda^k(x_{i-1}^++\beta_{i\, k}+b_{i-1} R_{i-2}\mu^k X_i), \qquad y=y_i^- +R_{i-1} \mu^k Y_i
\end{equation}
where the constant terms $\gamma_{i\, k}$ and $\beta_{i\, k}$ are given by
\begin{equation}\label{gabe}
\beta_{i\, k}= \xi_k(x_{i-1}^+,y^-_{i})\lambda^{-k},\qquad
\gamma_{i\, k}=\eta_k(x_i^+,y_{i+1}^-)\lambda^{-k},
\end{equation}

\begin{fact}When $k$ is large, the map $\bar Q_i$ sends $\D$ into a small neighborhood of 
$M_i^+$, whereas the map $Q_i$ sends $\D$ into a small neighborhood of 
$M_i^-$.

\end{fact}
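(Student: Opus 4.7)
The maps $\bar Q_i$ and $Q_i$ defined by (\ref{resc})--(\ref{resc-}) are affine transformations of $\R^2$, so the images $\bar Q_i(\D)$ and $Q_i(\D)$ are ellipses. The strategy is simply to identify the centers and semi-axes of these ellipses and show that both shrink to the points $M_i^+$ and $M_i^-$ respectively as $k\to+\infty$. The whole argument is a direct unpacking of the definitions once one controls the correction terms $\beta_{i\,k}$ and $\gamma_{i\,k}$.

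First I would deal with these correction terms. By (\ref{gabe}) we have $\beta_{i\,k}=\xi_k(x_{i-1}^+,y_i^-)\lambda^{-k}$ and $\gamma_{i\,k}=\eta_k(x_i^+,y_{i+1}^-)\lambda^{-k}$. The key estimate (\ref{xieta}) states that $\|\xi_k\|_{C^r}=o(\lambda^k)$ and $\|\eta_k\|_{C^r}=o(\lambda^k)$, so in particular the pointwise values $\xi_k(x_{i-1}^+,y_i^-)$ and $\eta_k(x_i^+,y_{i+1}^-)$ are $o(\lambda^k)$. Dividing by $\lambda^k$, we obtain
\[
\beta_{i\,k}\to 0,\qquad \gamma_{i\,k}\to 0\qquad\text{as }k\to+\infty.
\]

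Next, I would plug these estimates into the rescaling formulas. For any $(\bar X_i,\bar Y_i)\in\D$, formula (\ref{resc}) gives
\[
\bar x=x_i^++b_iR_{i-1}\mu^k\bar X_i,\qquad \bar y=\lambda^k\bigl(y_{i+1}^-+\gamma_{i\,k}\bigr)+\lambda^kR_i\mu^k\bar Y_i,
\]
so $\bar Q_i(\D)$ is an ellipse centered at $\bigl(x_i^+,\lambda^k(y_{i+1}^-+\gamma_{i\,k})\bigr)$ with semi-axes $|b_iR_{i-1}|\mu^k$ and $|\lambda|^k|R_i|\mu^k$. Since $0<|\lambda|<\mu<1$, both semi-axes tend to $0$, and by the observation above the center tends to $(x_i^+,0)=M_i^+$. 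Hence for any given neighborhood of $M_i^+$ and all sufficiently large $k$, the ellipse $\bar Q_i(\D)$ is contained in this neighborhood. The argument for $Q_i$ is symmetric: from (\ref{resc-}) the image $Q_i(\D)$ is an ellipse with center $\bigl(\lambda^k(x_{i-1}^++\beta_{i\,k}),y_i^-\bigr)\to(0,y_i^-)=M_i^-$ and semi-axes $|\lambda|^k|b_{i-1}R_{i-2}|\mu^k$ and $|R_{i-1}|\mu^k$, both of which tend to $0$.

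There is no real obstacle here: the whole point of the scaling constants $R_i,\mu^k,\lambda^k$ in (\ref{resc})--(\ref{resc-}) is precisely to produce affine charts that zoom into $M_i^\pm$ at geometric rates, and the only substantial input from earlier in the paper is the decay estimate (\ref{xieta}), which guarantees that the compensating shifts $\beta_{i\,k},\gamma_{i\,k}$ are uniformly bounded (in fact $o(1)$) rather than blowing up after division by $\lambda^k$. Once this is in hand, the statement is a one-line computation.
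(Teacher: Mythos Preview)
Your proof is correct and follows essentially the same approach as the paper: both arguments reduce to observing that $\beta_{i\,k},\gamma_{i\,k}\to 0$ by (\ref{xieta}) and then reading off from (\ref{resc})--(\ref{resc-}) that the affine images collapse to $M_i^+$ and $M_i^-$. Your version simply spells out the geometry of the ellipses and the size of the semi-axes, whereas the paper states the conclusion in one line.
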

\begin{proof}
First note that $\gamma_{i\, k}\to 0$ and $\beta_{i\, k}\to0$ as $k\to+\infty$, by virtue of (\ref{xieta}). Hence $\D$ is sent into a small neighborhood of $(x_i^+,0)=M_i^+$ by $\bar Q_i$ and into a small neighborhood of $(0,y_i^-)= M_i^-$ by $Q_i$.  
\end{proof}
The map $\bar Q_1$ is the map $Q$ in the statement of the lemma. Thus, $Q({\D})$ lies close to $M_1^+$ if $k$ is large enough, as required. 

Below, we derive formulas (see (\ref{xlylk2}),(\ref{nonp3})) for the maps $\bar Q_{i+1}^{-1} T_1 Q_{i+1}$ and
$(Q_{i+1})^{-1}T_0^k \bar Q_i$
for the original map $f$ and the perturbed map $\hat f$. As the composition 
$\prod_{i=1}^N (\bar Q_{i+1}^{-1} T_1 Q_{i+1}) (Q_{i+1})^{-1}T_0^k \bar Q_i$ equals
to $\bar Q^{-1} \circ \hat f^n \circ \bar Q$ for $n=N(ks +m)$, we will obtain formulas (\ref{rescaling}),(\ref{henonpsi}) in this way
and, thus, prove the lemma. 

\subsection{Renormalized iterations of the unperturbed map}
We start with the map $(Q_{i+1}^{\prime})^{-1}T_0^k Q_i$, i.e. the map $T_0^k$ in the rescaled coordinates. 

\begin{lemm}\label{lemma6.3}
There are two real functions $\hat\xi_{ik}$, $\hat\eta_{ik}$ which are $C^r$-small when $k$ is large such that every $(\bar X_i,\bar Y_i)\in \D$  sent to $(X_{i+1},Y_{i+1})$ by $(Q_{i+1})^{-1}T_0^k \bar Q_i$ satisfies: 
\begin{equation}\label{xlylk2}\begin{array}{l}
X_{i+1}= \bar X_i+\hat \xi_{ik}(\bar X_i,\bar Y_i), \\
Y_{i+1}= \bar Y_i+\hat \eta_{ik}(\bar X_i,\bar Y_i).\end{array}
\end{equation}
Moreover, the functions $\tilde \xi_{ik}'$, $\tilde \eta_{ik}'$  vanish at $(\bar X_i,\bar Y_i)=0$ and uniformly tend to zero in the $C^r$-norm:
\begin{equation}\label{htxieta}
\|\hat\xi_{ik}, \hat\eta_{ik}\|_{C^r} =o(1)_{k\to+\infty}.
\end{equation}
\end{lemm}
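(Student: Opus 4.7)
My plan is to substitute the affine rescalings (\ref{resc}), (\ref{resc-}) into the implicit representation (\ref{xy0k}) of $T_0^k$ and to resolve the resulting implicit equations by a contraction argument. Writing $\bar x = x_i^+ + b_i R_{i-1}\mu^k \bar X_i$ and $y = y_{i+1}^- + R_i\mu^k Y_{i+1}$, and using the identities $\lambda^k\beta_{i+1\,k} = \xi_k(x_i^+, y_{i+1}^-)$ and $\lambda^k\gamma_{i\,k} = \eta_k(x_i^+, y_{i+1}^-)$ built into (\ref{gabe}), the equations $x=\lambda^k\bar x+\xi_k(\bar x,y)$ and $\bar y=\lambda^k y+\eta_k(\bar x,y)$ reduce, after the leading terms cancel, to
$$X_{i+1}-\bar X_i = \frac{\xi_k(\bar x,y)-\xi_k(x_i^+,y_{i+1}^-)}{\lambda^k b_i R_{i-1}\mu^k}, \qquad Y_{i+1}-\bar Y_i = \frac{\eta_k(x_i^+,y_{i+1}^-)-\eta_k(\bar x,y)}{\lambda^k R_i\mu^k}.$$
These form an implicit system in $Y_{i+1}$ since $y$ itself depends on $Y_{i+1}$.

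The core quantitative step is to show that both right-hand sides are $o(1)$ in $C^r$. Since the map $(\bar X_i, Y_{i+1})\mapsto(\bar x,y)$ is affine with slope $O(\mu^k)$, the chain rule shows that the $j$-th derivative of $(\bar X_i,Y_{i+1})\mapsto \xi_k(\bar x,y)$ is bounded by $O(\mu^{kj})\,\|\xi_k\|_{C^j}$, and by (\ref{xieta}) this is $O(\mu^{kj})\,o(\lambda^k)$. Dividing by $\lambda^k\mu^k$ yields $o(\mu^{k(j-1)}) = o(1)$ for every $1\le j\le r$, using only $\mu<1$. For $j=0$ I would use the mean value theorem: the displacement $(\bar x-x_i^+, y-y_{i+1}^-)$ is of order $\mu^k$ on $\D$ and $\|\nabla\xi_k\|_{C^0}=o(\lambda^k)$, so the numerator is $o(\lambda^k\mu^k)$ and the ratio is $o(1)$. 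The same bounds apply verbatim to the $\eta_k$-numerator.

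With the $C^r$-smallness in place, the equation for $Y_{i+1}$ becomes, for $k$ large, a $C^r$-contraction in $Y_{i+1}$ uniformly in $(\bar X_i,\bar Y_i)$ on any bounded set, since $\partial_{Y_{i+1}}$ of its right-hand side is $o(1)$. The implicit function theorem (or directly, contraction mapping with parameters) produces a unique $C^r$-solution $Y_{i+1} = \bar Y_i + \hat\eta_{ik}(\bar X_i,\bar Y_i)$ with $\|\hat\eta_{ik}\|_{C^r}=o(1)$; substituting it back into the first equation defines $\hat\xi_{ik}(\bar X_i,\bar Y_i)$ with the same estimate. Vanishing at the origin is immediate: at $(\bar X_i,\bar Y_i,Y_{i+1})=(0,0,0)$ one has $\bar x=x_i^+$ and $y=y_{i+1}^-$, both numerators vanish identically, and by uniqueness of the fixed point $\hat\xi_{ik}(0,0)=\hat\eta_{ik}(0,0)=0$. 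The only delicate point is keeping control of the $C^r$-norm through the division by $\lambda^k\mu^k$; everything else is routine.
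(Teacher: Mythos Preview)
Your proof is correct and follows essentially the same route as the paper: substitute the affine charts (\ref{resc}), (\ref{resc-}) into the cross-form (\ref{xy0k}), use the choice (\ref{gabe}) of $\beta_{i+1\,k},\gamma_{i\,k}$ to cancel the constant terms, estimate the remaining differences $\xi_k(\bar x,y)-\xi_k(x_i^+,y_{i+1}^-)$ (and the analogous $\eta_k$-difference) in $C^r$ via the chain rule and (\ref{xieta}), and then apply the implicit function theorem to solve for $Y_{i+1}$. The paper packages the intermediate step through auxiliary functions $\tilde\xi_{ik},\tilde\eta_{ik}$ and then rescales $\mu^{-k}\tilde\xi_{ik}(\mu^k\cdot)$, but this is exactly your chain-rule computation in different notation.
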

\begin{proof}
By definition, it holds:
\begin{equation}
Q_{i+1}(X_{i+1},Y_{i+1})=T_0^k\bar Q_i(\bar X_i,\bar Y_i)\; .\end{equation}
Put $(x_{i+1}, y_{i+1})= Q_{i+1}(X_{i+1},Y_{i+1})$ and 
$(\bar x_{i}, \bar y_{i}):= \bar Q_{i}(\bar X_{i},\bar Y_{i})$.  By 
(\ref{xy0k}), (\ref{resc}) and (\ref{resc-}), it holds:
\begin{equation}\left\{\begin{array}{rl}
x_{i+1}=\lambda^k \bar x_i + \xi_k(\bar x_i,y_{i+1}), &  \bar y_i=\lambda^k y_{i+1} + \eta_k(\bar x_i,y_{i+1}),\\
\bar x_i=x_i^+ + b_i R_{i-1}\mu^k \bar X_i, & \bar y_i=\lambda^k(y_{i+1}^- + \gamma_{i\, k}+R_i \mu^k \bar Y_i),
\\
x_{i+1}=\lambda^k(x_{i}^++\beta_{i+1\, k}+b_{i} R_{i-1}\mu^k X_{i+1}), & y_{i+1}=y_{i+1}^- +R_{i} \mu^k Y_{i+1}.
\end{array}\right.
\end{equation}
Consequently:
\begin{equation}\label{xlylk}\begin{array}{l}
\lambda^k (\beta_{i+1\, k}+b_i R_{i-1}\mu^k X_{i+1})=\lambda^k b_i R_{i-1}\mu^k \bar X_i + \xi_k(x_i^+ + b_i R_{i-1}\mu^k \bar X_i,y_{i+1}^- +R_i \mu^k Y_{i+1}), \\
\lambda^k (\gamma_{i\, k}+ R_i  \mu^k \bar Y_i)=\lambda^k R_i \mu^k  Y_{i+1} 
+ \eta_k(x_i^+ + b_i  R_{i-1}\mu^k\bar X_i,y_{i+1}^-+R_i \mu^k Y_{i+1}).\end{array}
\end{equation}
By definition of $\beta_{i+1\, k}$ and $\gamma_{i\, k}$, given in (\ref{gabe}), formula (\ref{xlylk}) shows that the zero value of $(\bar X_i,\bar Y_i)$ corresponds to the zero value of $(X_{i+1},Y_{i+1})$. So we can rewrite (\ref{xlylk}) as
\begin{equation}\label{xlylk1}
\mu^k X_{i+1}=\mu^k \bar X_i + \tilde \xi_{ik}(\mu^k \bar X_i,\mu^kY_{i+1}), \qquad
\mu^k \bar Y_i = \mu^k Y_{i+1}  + \tilde \eta_{ik}(\mu^k \bar X_i, \mu^k Y_{i+1}),
\end{equation}
where, as follows from (\ref{xieta}), the functions $\tilde \xi_{ik}, \tilde \eta_{ik}$ uniformly tend to zero in the $C^r$-norm;
moreover, they vanish when $(\bar X_i,Y_{i+1})=0$.
Hence the $C^r$-norms of $\mu^{-k} \tilde \xi_{ik}(\mu^k\cdot)$ and $\mu^{-k} \tilde \eta_{ik}(\mu^k\cdot)$ are uniformly $C^r$-small and vanish at $0$. 
 By the implicit function theorem there are $C^r$-functions $\tilde \eta_{ik}'$ and $\tilde \xi'_{ik}$, so that:
\begin{equation}\label{xlylkpre2}\begin{array}{l}
X_{i+1}=\bar X_i + \mu^{-k}\tilde \xi_{ik}'(\mu^k \bar X_i,\mu^k \bar Y_i)\\
Y_{i+1}=\bar Y_i + \mu^{-k}\tilde \eta_{ik}'(\mu^k \bar X_i, \mu^k \bar Y_i),\\
\end{array}
\end{equation}
where $\tilde \xi_{ik}'$, $\tilde \eta_{ik}'$ uniformly tend to zero in the $C^r$-norm and vanish at $(\bar X_i,\bar Y_i)=0$. 
We notice that (\ref{xlylk2}) and (\ref{htxieta}) hold true with $\hat \xi_{ik}(\bar X_i,\bar Y_i):= \mu^{-k}\tilde \xi_{ik}'(\mu^k \bar X_i,\mu^k \bar Y_i)$ and $\hat \eta_{ik}(\bar X_i,\bar Y_i):= \mu^{-k}\tilde \eta_{ik}'(\mu^k \bar X_i, \mu^k \bar Y_i)$.
\end{proof}

As the next step, we consider the map $\bar Q_{i+1}^{-1}T_1 Q_{i+1}$ (the map $T_1$ in the rescaled coordinates)
for the {\em unperturbed map} $f$. We denote $d_i=\partial_{xy} \varphi_{2\, i}(0,0)$. 
\begin{lemm}\label{lemma6.4}
There are two real functions $\hat\phi_{1ik}$, $\hat\phi_{2ik}$ which are $C^k$-small when $k$ is large such that
every $(X_{i+1},Y_{i+1})\in \D$  sent to $(\bar X_{i+1},\bar Y_{i+1})$ by $\bar Q_{i+1}^{-1}T_1 Q_{i+1}$ satisfies:
\begin{equation}\label{nonp2}
\bar X_{i+1} =  Y_{i+1} + \hat\phi_{1ik}(X_{i+1},Y_{i+1}),\qquad
\bar Y_{i+1} = C_{ik}\mu^{-k} + A_i Y_{i+1} - X_{i+1} +\hat\phi_{2ik}(X_{i+1},Y_{i+1})
\end{equation}
where the constant term $C_{ik}=[c_{i+1}(x_i^++\beta_{i+1\, k}) - y_{i+2}^- - \gamma_{i+1\, k}]/R_{i+1}$ and 
$A_i=d_{i+1} x_i^+R_i/R_{i+1}$
are uniformly bounded, and for the uniform $C^r$-norm:
\begin{equation}\label{htpheta}
\|\hat \phi_{1ik}, \hat\phi_{2ik}\|_{C^{r}} =o(1)_{k\to+\infty}.
\end{equation}
\end{lemm}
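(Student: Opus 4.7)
\textbf{Proof plan for Lemma \ref{lemma6.4}.} The plan is to substitute the rescaling formulas (\ref{resc})--(\ref{resc-}) into the local expression (\ref{t1map}) of $T_1$ at $M_{i+1}^-$ and read off the claimed formula after inverting $\bar Q_{i+1}$. Writing $(x,y)=Q_{i+1}(X_{i+1},Y_{i+1})$ and $(\bar x,\bar y)=T_1(x,y)=\bar Q_{i+1}(\bar X_{i+1},\bar Y_{i+1})$, the first component of (\ref{t1map}) becomes
\[
x_{i+1}^++b_{i+1}R_i\mu^k\bar X_{i+1}=x_{i+1}^++b_{i+1}R_i\mu^k Y_{i+1}+\varphi_{1,i+1}\bigl(\lambda^k P_k(X_{i+1}),\,R_i\mu^k Y_{i+1}\bigr),
\]
where $P_k(X_{i+1}):=x_i^++\beta_{i+1,k}+b_iR_{i-1}\mu^k X_{i+1}$. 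Solving gives $\bar X_{i+1}=Y_{i+1}+\hat\phi_{1,i,k}(X_{i+1},Y_{i+1})$ with $\hat\phi_{1,i,k}:=(b_{i+1}R_i\mu^k)^{-1}\varphi_{1,i+1}(\lambda^kP_k,R_i\mu^kY_{i+1})$. The second component, after dividing through by $\lambda^k R_{i+1}\mu^k$, yields
\[
\bar Y_{i+1}=\frac{c_{i+1}(x_i^++\beta_{i+1,k})-y_{i+2}^--\gamma_{i+1,k}}{R_{i+1}\mu^k}+\frac{c_{i+1}b_iR_{i-1}}{R_{i+1}}X_{i+1}+\frac{\varphi_{2,i+1}(\lambda^kP_k,R_i\mu^kY_{i+1})}{\lambda^k\mu^k R_{i+1}}.
\]
The first summand is $C_{ik}\mu^{-k}$ by definition; and by the recurrence (\ref{rc}) we have $R_{i+1}=-c_{i+1}b_iR_{i-1}$, so the coefficient of $X_{i+1}$ equals $-1$ exactly.

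It remains to extract $A_iY_{i+1}$ from the last summand and bound the remainder. By (\ref{iprp2}) we may write $\varphi_{2,i+1}(x,Y)=x\,\psi_{i+1}(x,Y)$ with $\psi_{i+1}\in C^r$, and (\ref{phi12}) forces $\psi_{i+1}(0,0)=0$ and $\partial_y\psi_{i+1}(0,0)=d_{i+1}$. A first-order Taylor expansion of $\psi_{i+1}$ at the origin then gives
\[
\frac{\varphi_{2,i+1}(\lambda^k P_k,R_i\mu^kY_{i+1})}{\lambda^k\mu^k R_{i+1}}=\frac{P_k}{\mu^k R_{i+1}}\bigl[a\lambda^k P_k+d_{i+1}R_i\mu^kY_{i+1}+\tilde h(\lambda^kP_k,R_i\mu^k Y_{i+1})\bigr],
\]
with $\tilde h(x,Y)=O(x^2+xY+Y^2)$. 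The leading contribution from the middle term, in which we replace $P_k$ by $x_i^+$, is exactly $A_iY_{i+1}$ with $A_i=d_{i+1}x_i^+R_i/R_{i+1}$; everything else is absorbed into $\hat\phi_{2,i,k}$.

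The main technical step, and the point one must execute carefully, is the verification of (\ref{htpheta}). For $\hat\phi_{1,i,k}$, a partial derivative $\partial_X^a\partial_Y^b$ pulls out a factor $\mu^{-k}(\lambda^k\mu^k)^a(\mu^k)^b=\lambda^{ak}\mu^{(a+b-1)k}$ from the chain rule, and the surviving factor $\partial_x^a\partial_y^b\varphi_{1,i+1}$ is evaluated at an argument of size $O(\lambda^k+\mu^k)$. The vanishings $\varphi_{1,i+1}(0,0)=\partial_y\varphi_{1,i+1}(0,0)=0$ from (\ref{phi12}) handle the two critical cases $(a,b)=(0,0)$ and $(a,b)=(0,1)$; in every remaining case an explicit $\lambda^k$ or $\mu^k$ factor is present, and $|\lambda|<\mu^r<1$ gives decay. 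For $\hat\phi_{2,i,k}$, the prefactor is $(\lambda^k\mu^k)^{-1}$, so the analogous expansion yields the factor $\lambda^{(a-1)k}\mu^{(a+b-1)k}$; the subtracted term $A_iY_{i+1}$ exactly cancels the sole non-decaying contribution (the $(a,b)=(0,0)$ and $(0,1)$ parts coming from the $d_{i+1}xY$ term with $P_k$ frozen at $x_i^+$), and the scaling inequality (\ref{lamu}) ensures every leftover piece is $o(1)$ as $k\to\infty$. I expect this bookkeeping of partial derivatives to be the only nontrivial obstacle; the rest is direct substitution, and the recurrence (\ref{rc}) does the crucial algebraic work of producing the coefficient $-1$.
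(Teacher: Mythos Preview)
Your approach is correct and coincides with the paper's: substitute (\ref{resc})--(\ref{resc-}) into (\ref{t1map}), use the recurrence (\ref{rc}) to turn $c_{i+1}b_iR_{i-1}/R_{i+1}$ into $-1$, and then control the remainders via the vanishing conditions (\ref{phi12}) and the divisibility (\ref{iprp2}). The paper organises the $\hat\phi_{2,i,k}$ estimate as a case-by-case analysis of the mixed partials (using (\ref{iprp2}) for the pure $\partial_Y^m$ terms and the Lagrange formula for the value and first $X$-derivative), while you instead factor $\varphi_{2,i+1}(x,Y)=x\,\psi_{i+1}(x,Y)$ and Taylor-expand $\psi_{i+1}$; these are equivalent.

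One point to tighten. In your final paragraph you write that for $\hat\phi_{2,i,k}$ ``the prefactor is $(\lambda^k\mu^k)^{-1}$, so the analogous expansion yields the factor $\lambda^{(a-1)k}\mu^{(a+b-1)k}$'' and that $(a,b)=(0,0),(0,1)$ are the ``sole non-decaying'' cases. As written this is not right: with the bare prefactor $(\lambda^k\mu^k)^{-1}$, \emph{every} pure $Y$-derivative $(a,b)=(0,b)$ carries $\lambda^{-k}\mu^{(b-1)k}$, which blows up for all $b\le r$ because $|\lambda|<\mu^r$. What saves you is precisely the factoring you already performed in the displayed equation: once $\varphi_{2,i+1}=x\,\psi_{i+1}$, the effective prefactor is $P_k\mu^{-k}$ (no $\lambda^{-k}$), the chain-rule factor becomes $\mu^{(b-1)k}(\lambda^k\mu^k)^a$, and then indeed only $(0,0)$ and $(0,1)$ fail to decay automatically --- these are handled by $\psi_{i+1}(0,0)=0$ and the subtraction of $A_iY_{i+1}$. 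So your argument is sound, but the bookkeeping in the last paragraph should be phrased in terms of the factored form; otherwise the claimed cancellation is insufficient.
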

\begin{proof} By definition, it holds:
\begin{equation} \bar Q_{i+1}(\bar X_{i+1},\bar Y_{i+1})=T_1 Q_{i+1}(X_{i+1},Y_{i+1})\; .\end{equation}  
Put $(\bar x_{i+1}, \bar y_{i+1}):= \bar Q_{i+1}(\bar X_{i+1},\bar Y_{i+1})$ and $(x_{i+1},y_{i+1})= Q_{i+1}(X_{i+1},Y_{i+1})$. By 
(\ref{t1map}), (\ref{resc}) and (\ref{resc-}) it holds:

\[\left\{\begin{array}{c}\bar x_{i+1} = x_{i+1}^+ + b_{i+1} (y_{i+1}-y_{i+1}^-) + \varphi_{1\, i+1}(x_{i+1},y_{i+1}-y_{i+1}^-),\\
 \bar y_{i+1} = c_{i+1}x_{i+1} +\varphi_{2\, i+1}(x_{i+1},y_{i+1}-y_{i+1}^-),\\
\begin{array}{rl}
\bar x_{i+1}=x_{i+1}^+ + b_{i+1} R_{i}\mu^k \bar X_{i+1}, & \bar y_{i+1}=\lambda^k(y_{i+2}^- + \gamma_{i+1\, k}+R_{i+1} \mu^k \bar Y_{i+1})\; ,
\\
x_{i+1}=\lambda^k(x_{i}^++\beta_{i+1\, k}+b_{i} R_{i-1}\mu^k X_{i+1}), & y_{i+1} =y_{i+1}^-+R_{i} \mu^k Y_{i+1}\; .
\end{array}\end{array}\right.
\]
We inject the third line into the second and the third lines, and then eliminate the term $x_{i+1}^+$ which appears on both side of the new first line.
\[\left\{\begin{array}{c}
 b_{i+1} R_{i}\mu^k \bar X_{i+1}
=  b_{i+1} (y_{i+1}-y_{i+1}^-) + \varphi_{1\, i+1}(x_{i+1},y_{i+1}-y_{i+1}^-),\\
 \lambda^k(y_{i+2}^- + \gamma_{i+1\, k}+R_{i+1} \mu^k \bar Y_{i+1})= c_{i+1}x_{i+1} +\varphi_{2\, i+1}(x_{i+1},y_{i+1}-y_{i+1}^-),\\
\begin{array}{rl}
x_{i+1}=\lambda^k(x_{i}^++\beta_{i+1\, k}+b_{i} R_{i-1}\mu^k X_{i+1}), & y_{i+1}-y_{i+1}^- =R_{i} \mu^k Y_{i+1}\; .
\end{array}\end{array}\right.
\]
Let us now isolate the terms $\bar X_{i+1}$ and $\bar Y_{i+1}$, and replace $y_{i+1}-y_{i+1}^-$ by $R_{i} \mu^k Y_{i+1}$:
\[\left\{\begin{array}{c}
\bar X_{i+1}
=  Y_{i+1} + \frac{1}{b_{i+1} R_{i}\mu^k}\varphi_{1\, i+1}(x_{i+1},R_{i} \mu^k Y_{i+1}) ,\\
\bar Y_{i+1}= -\frac{ y_{i+2}^- + \gamma_{i+1\, k}}
{R_{i+1} \mu^k }+
\frac{c_{i+1}}{\lambda^k R_{i+1} \mu^k }x_{i+1} +\frac{1}{\lambda^k R_{i+1} \mu^k }\varphi_{2\, i+1}(x_{i+1},R_{i} \mu^k Y_{i+1}),\\
x_{i+1}=\lambda^k(x_{i}^++\beta_{i+1\, k}+b_{i} R_{i-1}\mu^k X_{i+1})\; .
\end{array}\right.
\]

By injecting the last line in the latter first line, it comes:
\begin{equation}\label{nonp1}
\bar X_{i+1} =  Y_{i+1} + \tilde \phi_{1ik}(X_{i+1},Y_{i+1}),\qquad
\bar Y_{i+1} = C_{ik}\mu^{-k} + \frac{c_{i+1}b_iR_{i-1}}{R_{i+1}} \bar X_{i+1} +\tilde \phi_{2ik}(X_{i+1},Y_{i+1})\; ,
\end{equation}
with:
\begin{equation}
\tilde \phi_{1ik}(X_{i+1},Y_{i+1})=\frac{\mu^{-k}}{b_{i+1} R_i}\varphi_{1\, i+1}(\lambda^k(x_i^++\beta_{i+1\, k}+b_i R_{i-1}\mu^k X_{i+1}), R_i \mu^k 
Y_{i+1}),\end{equation}
and with $C_{ik}$ defined in the statement of the Lemma, the number $\frac{c_{i+1}b_iR_{i-1}}{R_{i+1}}=-1$ by 
(\ref{rc}), and:
\begin{equation}
\tilde \phi_{2ik}(X_{i+1},Y_{i+1})=\frac{\mu^{-k}\lambda^{-k}}{R_{i+1}}\varphi_{2\, i+1}(\lambda^k(x_i^++\beta_{i+1\, k}+b_i R_{i-1}\mu^k X_{i+1}), R_i \mu^k Y_{i+1}). 
\end{equation}
By (\ref{phi12}),  $\varphi_{1\, i}(0) = \partial_y \varphi_{1\, i}(0)=0$.
As $\phi_{1\, i}$ is at least of class $C^r$, with $r\ge 3$, 
$\tilde \phi_{1ik}(0)$ is bounded by $\mu^{-k}(\lambda^k +(\mu^k)^2)$ which is small. 
Furthermore,  $\partial_y \tilde \phi_{1ik}(0)$ is bounded bounded  $\mu^k$ which is small. Finally, by the definition of $\tilde \phi_{1ik}$ all other derivatives (up to the order $r$) are uniformly small on $\D$. Thus, the $C^r$-norm of $\tilde \phi_{1ik}$ is small.

Observe that all the derivatives of the form 
$\partial^n_X\partial^m_Y \tilde \phi_{2ik}$ are uniformly small whenever $n+m\ge 2$ with $n\ge 1$. To carry the case $m\ge 2$ and $n=0$,
we use   (\ref{iprp2}) which implies that $\partial^m_Y \tilde \phi_{2ik}$ is uniformly small.

The Lagrange formula integrated from $(0,0)$ to $\lambda^k(x_i^++\beta_{i+1\, k})$  shows that both
$\partial_X \tilde \phi_{2ik}(0)$ and $\tilde \phi_{2ik}(0)$ are close to $\partial_x \varphi_{2\, i+1}(0)$ and $\varphi_{2\, i+1}(0)$, which are small by (\ref{phi12}).

Consequently, all the derivatives of $\tilde \phi_{2ik}$ are small except for, possibly, $\partial_Y \tilde \phi_{2ik}$. In view of the above estimates, it it is close to a constant. Hence it is suffices to evaluate it at $0$. Since $\partial_y  \varphi_{2\, i+1}(0)=0$, we have:
\[\partial_Y \tilde \phi_{2ik}(0)=
\frac{\lambda^{-k}R_i}{R_{i+1}} \partial_y \varphi_{2\, i+1}(\lambda^k(x_i^++\beta_{i+1\, k}))= 
\frac{\lambda^{-k}R_i}{R_{i+1}} \int_{0}^{\lambda^k(x_i^++\beta_{i+1\, k})} \partial_x\partial_y  \varphi_{2\, i+1}(s)ds\]
which converges to $\frac{\lambda^{-k}R_i\lambda^k(x_i^++\beta_{i+1\, k})}{R_{i+1}} \partial_x\partial_y  \varphi_{2\, i+1}(0)\sim \frac{R_i x_i^+}{R_{i+1}} \partial_x\partial_y   \varphi_{2\, i+1}(0)  =A_i.$

\end{proof}

\subsection{Construction of the perturbation $g$}
The perturbation we will now add to the map $f$ does not change the map $T_0$ (because the difference $\hat f-f$ is supported outside of $U_0$), so Lemma \ref{lemma6.3} and its formula (\ref{xlylk2}) for $(Q_{i+1})^{-1}T_0^k \bar Q_i$
remains the same when $f$ is replaced by the perturbed map $\hat f= g\circ f$. The map $T_1$ will be affected by the
perturbation, hence Lemma \ref{lemma6.4} and its formula (\ref{nonp2}) for the map $\bar Q_{i+1}^{-1}T_1  Q_{i+1}$ will be modified.
In order to construct the perturbation map $g$, we take the functions $\psi_1,\dots,\psi_N$ from the statement of the lemma
and define
\begin{equation}\label{gpsi}
\hat\psi_{i+1}: \bar x\in \D\mapsto 
-\lambda^k C_{ik}R_{i+1} - \lambda^k \frac{A_iR_{i+1}}{b_{i+1} R_i} (\bar x- x_{i+1}^+) 
+ \lambda^k\mu^k R_{i+1} \psi_i(\mu^{-k}(\bar x- x_{i+1}^+)/(b_{i+1}R_i)),
\end{equation}
where the constants $C_{ik}$, $A_i$ are the same as in (\ref{nonp2}). As (\ref{lamu}) states that $\mu^r>\lambda$, it holds: 
$$\|\hat\psi_{i+1}\|_{C^r}=o(\mu^k)\to 0  \mbox{    as    } k\to+\infty$$

We use the notation $(\bar x, \bar y)$ for the non-rescaled coordinates near $\{M_i^+; 1\le i\le N\}= \{(x_i^+,0); 1\le i\le N\}\subset W^u_{loc}(O)$. 
Let $\delta>0$ be small and put: 
\[V_i := [-\delta,\delta]^2+M_i^+=[-\delta+x_i^+,\delta+x_i^+]\times [-\delta,\delta]\quad V_i' := [-\delta/2,\delta/2]^2+M_i^+\] 
Hence $V:=\sqcup_i V_i$ and $V':=\sqcup_i V'_i$
are close to  $\{M_i^+; 1\le i\le N\}$. 
We recall that $\bar Q_i(\D)$ is close to $M_i^+$ when $k$ is large. 
For $\delta>0$ small enough, independent of $k$ large enough, so that for every $i$, $f^j(\bar Q_i(\D))$ is disjoint from $V$ for every $1\le j< sk+m$. 
Let $\psi\in C^r(\sqcup_i [-\delta+x_i^+,\delta+x_i^+],\R)$ be so that its restriction to $[-\delta+x_i^+,\delta+x_i^+]$ 
satisfies $D\Psi=  \hat \psi_i$. Let $\rho\in C^\infty(\R,[0,1])$ be the bump function equal to $1$ over $V'$ with support in $V$.  

 Let the symplectic perturbation map $g$ be equal to
the time-1 map by the Hamiltonian flow with the Hamiltonian $H(\bar x, \bar y)=-\Psi(\bar x)\rho(\bar x,\bar y)$.
Since $\|\hat\psi_i\|_{C^r}\to 0$ as $k\to+\infty$,
it follows that $g-id$ can be made arbitrarily small in $C^r$.

By construction, the restriction  $g|_{\bar Q_i(\D)}$ is given by
$$g|_{\bar Q_i(\D)}:(\bar x, \bar y)\in \bar Q_i(\D)\mapsto (\bar x, \bar y + \hat\psi_i(\bar x)).$$
Plugging this into (\ref{t1map}), we find that for the perturbed map $\hat f=g\circ f$ the transition map $T_1=\hat f^m=g\circ f^m$ 
from a neighborhood of $M_{i+1}^-$ to a neighborhood of $M_{i+1}^+$ is given by
\begin{equation}\label{t1maphat}
\bar x = x_{i+1}^+ +b_{i+1} (y-y_{i+1}^-) + \varphi_{1\, {i+1}}(x,y-y_{i+1}^-), \qquad 
\bar y = c_{i+1}x + \hat\psi_{i+1}(\bar x)+\varphi_{2\,{i+1}}(x,y-y_{i+1}^-),
\end{equation}
with the same functions $(\varphi_{1\, i+1},\varphi_{2\, i+1})$ and coefficients $x^+_{i+1}$, $y^-_{i+1}$, $b_{i+1}$, $c_{i+1}$ as for the unperturbed 
map $f$, and the only additional term $\hat\psi_{i+1}(\bar x)$ in the second equation. Therefore, the only correction to the map $\bar Q_{i+1}^{-1}T_1 Q_{i+1}$ due to this perturbation, will be the additional term
$$D\bar  Q_{i+1}\circ (0,\bar \psi_{i+1})\circ \bar Q_{i+1}^{-1}(\bar X_{i+1},\bar  Y_{i+1}) = (0,\lambda^{-k}\mu^{-k}R_{i+1}^{-1}\hat\psi_{i+1}(x_{i+1}^+ + b_{i+1} R_i\mu^k \bar X_{i+1}))$$
in the right-hand side of the respective equations of (\ref{nonp2}) (we use here the rescaling formula (\ref{resc}) with $i$ replaced by $i+1$). By (\ref{gpsi}), this term equals to
$$D\bar  Q_{i+1}\circ (0,\bar \psi_{i+1})\circ \bar Q_{i+1}^{-1}(\bar X_{i+1},\bar  Y_{i+1})=(0, -\mu^{-k} C_{ik} - A_i \bar X_{i+1} + \psi_i(\bar X_{i+1})),$$
so the equation (\ref{nonp2}) for the map $Q_{i+1}^{-1}T_1 Q_{i+1}^{\prime}$ changes to
\begin{equation}\label{nonp3}\begin{array}{l}
\bar X_{i+1} =  Y_{i+1} + \hat \phi_{1ik}(X_{i+1},Y_{i+1}),\\
\bar Y_{i+1} = - X_{i+1} +\psi_i(\bar X_{i+1})+\hat  \phi_{2ik}(X_{i+1},Y_{i+1})-
A_i\hat \phi_{1ik}(X_{i+1},Y_{i+1}).\end{array}
\end{equation}

By this formula and Lemma \ref{lemma6.3}  (\ref{xlylk2}), the map $T_1T_0^k=\hat f^{m+sk}=g\circ f^{m+sk}$ from a small neighborhood of $M_i^+$ to a small neighborhood of $M_{i+1}^+$ is written, in the rescaled coordinates, as
\begin{equation}\label{ifin}
\bar X_{i+1} = \bar Y_i + \phi_{1i}(\bar X_i,\bar Y_i), \qquad
\bar Y_{i+1}= - \bar X_i +\psi_i(\bar X_{i+1})+\phi_{2i}(\bar X_i,\bar Y_i),
\end{equation}
where
\begin{equation}\label{longlong}
\begin{array}{l}
\phi_{1i}(\bar X_i,\bar Y_i)= \hat \eta_{ik}(\bar X_i,\bar Y_i)+\hat \phi_{1ik}\circ (id+(\xi_{ik},\eta_{ik}))\circ (\bar X_{i+1},Y_{i+1}),\\
\phi_{2i}(\bar X_i,\bar Y_i)= -\hat \xi_{ik}(\bar X_i,\bar Y_i)+(\hat  \phi_{2ik}-
A_i\hat \phi_{1ik})\circ (id+(\xi_{ik},\eta_{ik}))\circ (\bar X_{i+1},\bar Y_{i+1}).\end{array}
\end{equation}
 By (\ref{htxieta}),(\ref{htpheta}),
the functions $\phi_{1i},\phi_{2i}$ tend to zero uniformly in $C^r$ on any compact
as $k\to+\infty$. Importantly, the functions $(\phi_{1\,1},\phi_{2\, i})_i$ do not depend on the choice of the perturbation functions $(\psi_i)_i$.

Formula (\ref{ifin}), in fact, completes the proof of the lemma. Indeed, it can be rewritten as
$$(\bar X_{i+1},\bar Y_{i+1}) = H_{\psi_i}\circ \Phi_i (\bar X_i,\bar Y_i),$$
where
$$\Phi_i(\bar X_i,\bar Y_i)=(\bar X_i-\phi_{2\, i}(\bar X_i,\bar Y_i), \bar Y_i + \phi_{1\, i}(\bar X_i,\bar Y_i)).$$
Thus, the map $(T_1 T_0^k)^N$ from a small neighborhood of $M_1^+$ takes indeed the required form 
(\ref{rescaling}) in the rescaled coordinates $(X,Y)$.
\bibliographystyle{alpha}
\bibliography{references}
\end{document}